\documentclass[oneside, a4paper]{amsart}
\usepackage[a4paper,  width=15cm, height=24cm]{geometry}

\usepackage{amsfonts, amsmath, amssymb, amsthm}
\usepackage{mathtools}
\usepackage[latin1]{inputenc}
\usepackage[T1]{fontenc}
\usepackage{lmodern,dsfont}
\usepackage[english]{babel}
\usepackage[stretch=10,shrink=1]{microtype}
%%%%%%%%%%%%%%%%%%%%%%%%%%%%%%%%%%%%%%%%%%%%%%%%%%%%%%%
%
%%%%%%%%%%%%%%%%%%%%%%%%%%%%%%%%%%%%%%%%%%%%%%%%%%%%%%%
\newcommand{\PL}{\mathbb{P}(\mathcal{P})}
\newcommand{\LL}{\mathbb{P}(\mathcal{L})}
\newcommand{\ZZ}{\mathcal{Z}}

\newcommand{\lspan}[1]{\langle{#1}\rangle}
\newcommand{\lspann}[1]{\text{span}\{ {#1}\} }
%%%%%%%%%%%%%%%%%%%%%%%%%%%%%%%%%%%%%%%%%%%%%%%%%%%%%%%
%
%%%%%%%%%%%%%%%%%%%%%%%%%%%%%%%%%%%%%%%%%%%%%%%%%%%%%%%
\newtheoremstyle{dotless}{6pt}{18pt}{}{}{\bfseries}{.}{\newline}{}
\theoremstyle{dotless}
\newtheorem{thm}{Theorem}
\newtheorem{defi}[thm]{Definition}
\newtheorem{lem}[thm]{Lemma}

\newtheorem{prop}[thm]{Proposition}
\newtheorem{cor}[thm]{Corollary}
\newtheorem{fact}[thm]{Fact}
%%%%%%%%%%%%%%%%%%%%%%%%
\newcommand{\changefont}[3]{\fontfamily{#1} \fontseries{#2} \fontshape{#3} \selectfont}
\changefont{ptm}{m}{n}
%%%%%%%%%%%%%%%%%%%%%%%%%%%%%%%%%%%%%%%%%%%%%%%%%%%%%%%
%
%%%%%%%%%%%%%%%%%%%%%%%%%%%%%%%%%%%%%%%%%%%%%%%%%%%%%%%
\title{Dupin cyclidic systems geometrically revisited}
\author{Gudrun Szewieczek}
%%%%%%%%%%%%%%%%%%%%%%%%%%%%%%%%%%%%%%%%%%%%%%%%%%%%%%%
%
%%%%%%%%%%%%%%%%%%%%%%%%%%%%%%%%%%%%%%%%%%%%%%%%%%%%%%%

\begin{document}

\maketitle
\noindent%\today
\bibliographystyle{plain} 
\begin{center}
\begin{minipage}{13cm}\small
\textbf{Abstract.} The induced metrics of Dupin cyclidic systems, that is, orthogonal coordinate systems with Dupin cyclides and spheres as coordinate surfaces, were provided by Darboux. Here we take a more geometric point of view and discuss how Dupin cyclides and Lam\'e families of Dupin cyclidic systems can be obtained by suitably evolving an initial circle or a Dupin cyclide, respectively.
This approach reveals that those Lam\'e families are given by parallel surfaces in various space forms.  
\end{minipage}
\vspace*{0.5cm}\\\begin{minipage}{13cm}\small
\textbf{MSC 2020.} 53C12 (primary) \ 53A05, 53A35, 53E99 (secondary)
\end{minipage}
\vspace*{0.5cm}\\\begin{minipage}{13cm}\small
\textbf{Keywords.} Dupin cyclide; orthogonal coordinate system; cyclic system; focal conic domain; Ribaucour transformation; Lie sphere geometry; M\"obius geometry; evolution map;
\end{minipage}
\end{center}
%%%%%%%%%%%%%%%%%%%%%%%%%%%%%%%%%%%%%%%%%%%%%%%%%%%%%%%%

%%%%%%%%%%%%%%%%%%%%%%%%%%%%%%%%%%%%%%%%%%%%%%%%%%%%%%%%%%%%%%%%%%
\section{Introduction}
%%%%%%%%%%%%%%%def Dupin and DC-systems%%%%%%%%%%%%%%%%%%%%%%%
\noindent Dupin cyclides were introduced in~\cite{dupin1822} as surfaces where all curvature lines are circular. Following~\cite{MR1246529} an even weaker characterization leads to this prominent surface class: Dupin cyclides are, apart from spheres and planes, the only surfaces that can be foliated by two orthogonal families of circular arcs. A vast number of works on Dupin cyclides illustrates the great interest in this surface class and offers a variety of different approaches to these surfaces and their applications (see for example \cite{dupin_cyclides, blending_dupin, blending_dupin_2, PRATT1990221}). 
%%%%%%%%%%%%%%%%%%%%%%%%%%%%%%%%%%%%%%%%%%%%%%

Hence, it is natural to ask whether there are triply orthogonal systems with Dupin cyclides as coordinate surfaces. Classical geometers found first explicit examples \cite{salkowski} and Darboux computed the induced metrics for these kind of systems \cite{darboux_ortho}. However, a complete geometric understanding for this class of systems is still missing. Since in orthogonal coordinate systems any two coordinate surfaces from different families intersect along curvature lines, all orthogonal trajectories of these systems are circular. Thus, Dupin cyclidic systems (DC-systems) are cyclic systems with respect to all three coordinate directions \cite{rib_cyclic}.
%%%%%%%%%%%%%%%%%%%%%%%%%%%%%%%%%%%%%%%%%%%
\\\\The main aim of this work is to gain a geometric overview on DC-systems and provide an explicit geometric construction by suitably evolving an initial Dupin cyclide. On the way we also discuss how Dupin cyclides can be generated by a similar approach and show that those arise from the initial data of a curvature circle and a 1-parameter family of M\"obius transformations.  
%%%%%%%%%%%%%%%%%%%%%%%%%%%%%%%%%%%%%%%
\\\\Simple but frequently used examples of DC-systems include spherical, bipolar cylindrical and toroidal coordinates (cf.\,\cite{MR947546}). However, the class of DC-systems is considerably richer, which is also reflected in the wide range of applications in various different fields. One of the crucial aspects for the use of DC-systems is that their flat induced metrics are R-separable in the Laplace equation (see~\cite{darboux_ortho} or \cite{sym_sep} for simplified proofs and generalizations to higher dimensions). 
%%%%%%%%%%%%%%%%%%%%%%%%%%%%%%%%%%%%%%%%%%%%%%%%%%

A special subclass of DC-systems, namely focal conic domains (FCDs), plays an important role in liquid cristal theory. It was observed experimentally~\cite{friedel}, as well as via a variational approach~\cite{Schief2005OnAN}, that the layer-like structure of smectic phases is locally provided by confocal Dupin cyclides. Global results in this direction are obtained by considering packings with FCDs, as for example in~\cite{Honglawan34, Lavrentovich5}.

Further, cubes of DC-systems are closely related to discrete triply orthogonal systems: those provide elementary hexahedrons of 3D cyclidic nets (cf.\,\cite{dcyclidic}). In this sense parts of DC-systems can be used to ``smoothen''  discrete triply orthogonal systems.
%%%%%%%%%%%%%%%%%%%%%%%%%%%%%%%%%%%%%%%%%%%%%%%%%%%%%%%%5
\\\\\textbf{Preliminary works.} Since Dupin cyclides are isothermic surfaces, the induced flat metrics of DC-systems arise in Darboux's classification of isothermic coordinate systems~\cite{darboux_ortho, sym_sep}. Moreover, in~\cite[Chapter~I\!I\!I]{darboux_ortho} Darboux described the following geometric construction for Lam\'e families of DC-systems, that is, a 1-parameter family of surfaces that is part of a triply orthogonal system: take all circles that intersect a Dupin cyclide and a sphere orthogonally. This circle congruence is then cyclic and the 1-parameter family of surfaces orthogonal to the congruence yields a Lam\'e family of a DC-system. He left it as an open question whether this approach yields all Lam\'e families contained in DC-systems. 
%
%
%%%%%%%%%%%%%%%%%%%%%%%%%%%lie vs moebius%%%%%%%%%%%%%%%%%%%%%%%%%%%%%%%%%%%%%%%%
\\\\\textbf{Methodology.} As discussed in~\cite{blaschke, book_cecil} Dupin cyclides admit an elegant description in Lie sphere geometry, the geometry of oriented spheres introduced by Lie in~\cite{L1872}. In particular, the class of Dupin cyclides is invariant under Lie sphere transformations, those transformations that preserve oriented contact between spheres. However, since in general Lie sphere transformations do not preserve angles, when discussing DC-systems, we have to switch to a M\"obius geometric setup. Joyfully, by fixing a point sphere complex, we can recover a M\"obius geometry as subgeometry of Lie sphere geometry and can still use some of the powerful and elegant tools of Lie sphere geometry.   

The underlying idea of this work is to construct Dupin cyclides as well as Lam\'e families of DC-systems by evolving an initial circular curvature line or a Dupin cyclide, respectively. It turns out that in both cases the evolution map is provided by a 1-parameter family of suitable Lie inversions, the basic transformations in Lie sphere geometry. Hence, this approach gives an efficient and explicit way to construct these objects and gain further insights into the geometry of DC-systems.
 %%%%%%%%%%%%%%%%%%%%%%%%%%%%%%%%%%%%%%%%%%%%%%%%%%%%%%%%%%%
\\\\\textbf{Structure.} Section~\ref{sect_prelim} starts with a brief introduction to Lie sphere geometry and some basic results in this realm. In Section~\ref{sect_dupin_cyclides} we follow the approach taken in~\cite{spherical} and demonstrate how a Dupin cyclide is obtained by evolving an initial circular curvature line. This construction enables us to directly construct 2-ortho Dupin cyclides~\cite{ortho_circles} and DC-systems with one family of totally umbilic coordinate surfaces (see Subsections~\ref{subsect_2_ortho} and~\ref{subsect_tos_2_ortho}).

Section~\ref{sect_dcsystems} is then devoted to Dupin cyclidic systems: since  any two Dupin cyclides in a Lam\'e family of a DC-system form a Ribaucour pair, the study of Ribaucour related Dupin cyclides provides the basis for our considerations on DC-systems (cf.\,Cor~\ref{cor_r_cyclides_dupin} and Prop~\ref{prop_dupin_rib}). As one of our main results (Thm~\ref{thm_dc_via_evolution}) we prove that evolving a Dupin cyclide via Lie inversions determined by linear sphere complexes that are represented by elements in a 2-dimensional subspace of $\mathbb{R}^{4,2}$ leads to the sought-after Lam\'e families. As a consequence those are given by parallel surfaces in various space forms (see Prop~\ref{prop_parallel_spaceform}). In particular, for the case of hyperbolic space forms the boundary, considered as totally umbilic surface, also yields a coordinate surface in the Lam\'e family; thus, these are exactly the families that are obtained by Darboux's construction. 

In the last Section~\ref{sect_applications} we briefly sketch how the developed evolution approach could be used for several scopes as visualization of Dupin cyclides and blending problems and point out some relations to  discrete differential geometry.  
\\\\\textbf{Acknowledgements.} The author would like to thank Udo Hertrich-Jeromin, Mar\'ia Lara Mir\'o and Martin Peternell for fruitful and pleasurable discussions around the subject. 
%%%%%%%%%%%%%%%%%%%%%%%%%%%%%%%%%%%%%%%%%%%%%%%%%%%%%%%%%%%%
%
%
%%%%%%%%%%%%%%%%%%%%%%%%%%%%%%%%%%%%%%%%%%%%%%%%%%%%%%%%%%%%
\section{Preliminaries}\label{sect_prelim}
\noindent In this section we present the basics of Lie sphere geometry and their M\"obius geometric subgeometries. A more extensive introduction to this topic can be found for example in \cite{blaschke, book_cecil, coolidge}.
\\\\Throughout this paper we shall use the hexaspherical coordinate model of Lie sphere geometry as introduced by Lie [27] and consider the 6-dimensional vector space $\mathbb{R}^{4,2}$ equipped with a metric $\lspan{.,.}$ of signature $(4,2)$. 

Any element in the \textit{projective light cone (Lie quadric)}
\begin{equation*}
\LL = \{ \lspann{\mathfrak{s}} \ | \ \lspan{\mathfrak{s},\mathfrak{s}}=0, \ \mathfrak{s} \neq 0 \} \subset \mathbb{P}(\mathbb{R}^{4,2})
\end{equation*}
will be identified with an oriented 2-sphere. Note that also oriented planes and points (spheres with radius zero) are considered as oriented 2-spheres. Homogeneous coordinates of elements in $\mathbb{P}(\mathbb{R}^{4,2})$ will be denoted by corresponding black letters; if statements hold for arbitrary homogeneous coordinates we will use this convention without explicitly mentioning it.
%%%%%%%%%%%%%%%%%%%%%%%%%%%%%%%%%%%
\\\\Two spheres $s_1, s_2 \in \LL$ are in oriented contact if and only if $\lspan{\mathfrak{s}_1, \mathfrak{s}_2}= 0$. If two spheres $s_1$ and $s_2$ are in oriented contact, then the elements in $\lspann{s_1, s_2}$ represent a \textit{contact element}, that is, a pencil of spheres in oriented contact. The set of all contact elements, hence all lines in $\LL$, will be denoted by $\mathcal{Z}$.

Surfaces in this model are described by \textit{Legendre maps}, that is, by a 2-parameter family of contact elements, $f:M^2 \rightarrow \mathcal{Z}$, satisfying the contact condition $\lspan{\mathfrak{s_1}, d\mathfrak{s_2}}=0$ for any $s_1, s_2 \in f$.
\\\\The transformation group in this framework is comprised by \textit{Lie sphere transformations}, those are transformations that map spheres to spheres such that oriented contact between spheres is preserved. It is important to note that in general Lie sphere transformations do not preserve the angle between two spheres and can interchange spheres, points and planes.
\\\\\textbf{M\"obius subgeometries.}A M\"obius geometry appears as subgeometry of Lie sphere geometry by choosing a point sphere complex~$\mathfrak{p} \in \mathbb{R}^{4,2}$, $\lspan{\mathfrak{p}, \mathfrak{p}}=-1$. The group of \textit{M\"obius transformations} is then provided by the Lie sphere transformations that fix this point sphere complex~$\mathfrak{p}$ and therefore preserve the set of \textit{point spheres} (spheres with radius zero)
\begin{equation*}
\PL:=\{ v \in \LL \ | \ \lspan{\mathfrak{v}, \mathfrak{p}}=0 \}.
\end{equation*}
Moreover, these transformations preserve the (unoriented) angle $\varphi$ between two spheres $s_1, s_2 \in \LL$ that is given by
\begin{equation*}
\cos \varphi = 1 - \frac{\lspan{\mathfrak{s}_1,\mathfrak{s}_2} \lspan{\mathfrak{p},\mathfrak{p}}}{\lspan{\mathfrak{s}_1,\mathfrak{p}}\lspan{\mathfrak{s}_2,\mathfrak{p}}}.
\end{equation*}
In particular, two spheres $s_1 \in \LL$ and $s_2 \in \LL$ intersect \emph{orthogonally} if and only if 
\begin{equation*}
\lspan{\mathfrak{s}_1, \mathfrak{s}_2+\lspan{\mathfrak{s}_2, \mathfrak{p}}\mathfrak{p}}=0.
\end{equation*}
Note that as a special case we say that a point sphere intersects a sphere orthogonally if and only if the point sphere lies on the sphere. 
%%%%%%%%%%%%%%%%%%%%%%%%%%%%%%%%%%%%%%%%%%%%%%%%%%
%
%
%%%%%%%%%%%%%%%%%%%%%%%%%%%%%%%%%%%%%%%%%%%%%%%%%%
\\\\\textbf{Linear sphere complexes and Lie inversions.} In Lie sphere geometry,
any element $a \in \mathbb{P}(\mathbb{R}^{4,2})$ defines
a \emph{linear sphere complex}
$\mathbb{P}(\mathcal{L}\cap\{a\}^\perp)$,
that is, a 3-dimensional family of 2-spheres.
We distinguish three types of linear sphere complexes:
\begin{itemize}
\item if $\lspan{ \mathfrak{a}, \mathfrak{a}} = 0$, the complex
is called \emph{parabolic} and consists of all spheres
that are in oriented contact with the sphere represented
by~$a$;
\item if $\lspan{ \mathfrak{a}, \mathfrak{a}} < 0$, we
say that the complex is \emph{hyperbolic}; if such a complex is chosen as a point sphere complex, it contains all point spheres of the distinguished M\"obius subgeometry;
\item if $\lspan{\mathfrak{a}, \mathfrak{a}} > 0$, we obtain an \emph{elliptic}
linear sphere complex. In a M\"obius subgeometry modelled on $\lspann{\mathfrak{p}}^\perp$ we have the following geometric
characterization: the linear sphere complex contains
all spheres that intersect the two spheres (that coincide
up to orientation)
\begin{equation}\label{equ_sphere_compl}
 \mathfrak{s}_a^\pm \in \lspann{\mathfrak{a}, \mathfrak{p}}
\end{equation}
at the constant angle
\begin{equation*}
 \cos^2 \varphi = \frac{K}{K-1}, \ \ \text{where } \ 
 K = \frac{\lspan{\mathfrak{a}, \mathfrak{p}}^2}
  {\lspan{\mathfrak{a}, \mathfrak{a}}
   \lspan{\mathfrak{p}, \mathfrak{p}}}.
\end{equation*}
\end{itemize}
\noindent Any elliptic and hyperbolic linear sphere complex may be
used to define a reflection:
let $a \in \mathbb{P}(\mathbb{R}^{4,2})$, $\lspan{
\mathfrak{a}, \mathfrak{a}} \neq 0$, then the \emph{Lie
inversion with respect to the linear sphere complex
$\mathbb{P}(\mathcal{L}\cap\{a\}^\perp)$} is given by
\begin{equation*}
 \sigma_a:\mathbb{R}^{4,2} \rightarrow \mathbb{R}^{4,2}, \ \ 
 \mathfrak{r} \mapsto \sigma_a(\mathfrak{r}) :
  = \mathfrak{r}-\frac{2\lspan{\mathfrak{r}, \mathfrak{a}}}
   {\lspan{\mathfrak{a}, \mathfrak{a}}}\mathfrak{a}.
\end{equation*}
According to the linear sphere complex, the Lie inversion is said to be \emph{elliptic} or \emph{hyperbolic}.

Any Lie inversion $\sigma_a$ is an involutory Lie sphere transformation that preserves all elements lying in the corresponding linear
sphere complex $\mathbb{P}(\mathcal{L}\cap\{a\}^\perp)$. 
\\\\We recall the following geometric standard construction for Lie inversions:
\begin{fact}\label{fact_fourspheres_lie}
Let $s_1, s_2, s_3$ and $s_4 \in \LL$ be four spheres that are mutually not in oriented contact. Then there exists a unique Lie inversion $\sigma_a$ such that
\begin{equation*}
\sigma_a(s_1)=s_2 \ \ \text{and } \ \sigma_a(s_4)=s_3.
\end{equation*} 
If homogeneous coordinates for the spheres satisfy the linear combination $\mathfrak{s}_1-\mathfrak{s}_2+\mathfrak{s}_3-\mathfrak{s}_4=0$, then the linear sphere complex $a$ is explicitly given by $\mathfrak{a}:= \mathfrak{s}_1-\mathfrak{s}_2= \mathfrak{s}_4-\mathfrak{s}_3$.
\end{fact}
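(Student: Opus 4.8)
The plan is to verify the two requisite properties of the candidate Lie inversion directly from its definition: that the given $\mathfrak{a}$ actually has nonzero norm (so that $\sigma_{\mathfrak{a}}$ is well-defined), and that it performs the claimed swaps; and then to argue uniqueness.

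First I would establish the algebraic consistency of the two expressions for $\mathfrak{a}$. Since we are free to scale the homogeneous coordinates, the hypothesis $\mathfrak{s}_1-\mathfrak{s}_2+\mathfrak{s}_3-\mathfrak{s}_4=0$ is available, and it immediately gives $\mathfrak{s}_1-\mathfrak{s}_2 = \mathfrak{s}_4-\mathfrak{s}_3$, so the definition $\mathfrak{a} := \mathfrak{s}_1-\mathfrak{s}_2 = \mathfrak{s}_4-\mathfrak{s}_3$ is unambiguous. Next I would check $\lspan{\mathfrak{a},\mathfrak{a}} \neq 0$: expanding $\lspan{\mathfrak{s}_1-\mathfrak{s}_2, \mathfrak{s}_4-\mathfrak{s}_3}$ using $\lspan{\mathfrak{s}_i,\mathfrak{s}_i}=0$ (all four lie on the Lie quadric) leaves a sum of mixed terms $\lspan{\mathfrak{s}_i,\mathfrak{s}_j}$; using the linear relation to substitute once more, one reduces this to a nonzero multiple of a single pairing such as $\lspan{\mathfrak{s}_1,\mathfrak{s}_3}$ (equivalently $\lspan{\mathfrak{s}_1,\mathfrak{s}_4}$, etc.), which is nonzero precisely because the spheres are pairwise not in oriented contact. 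This is the computational heart of the argument, though it is short.

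Then I would verify the swaps. Compute
\begin{equation*}
\sigma_{\mathfrak{a}}(\mathfrak{s}_1) = \mathfrak{s}_1 - \frac{2\lspan{\mathfrak{s}_1,\mathfrak{a}}}{\lspan{\mathfrak{a},\mathfrak{a}}}\,\mathfrak{a}.
\end{equation*}
Writing $\mathfrak{a} = \mathfrak{s}_1 - \mathfrak{s}_2$, one has $\lspan{\mathfrak{s}_1,\mathfrak{a}} = -\lspan{\mathfrak{s}_1,\mathfrak{s}_2}$ and, expanding, $\lspan{\mathfrak{a},\mathfrak{a}} = -2\lspan{\mathfrak{s}_1,\mathfrak{s}_2}$, so the coefficient $\frac{2\lspan{\mathfrak{s}_1,\mathfrak{a}}}{\lspan{\mathfrak{a},\mathfrak{a}}}$ equals $1$, yielding $\sigma_{\mathfrak{a}}(\mathfrak{s}_1) = \mathfrak{s}_1 - \mathfrak{a} = \mathfrak{s}_2$. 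The analogous computation with $\mathfrak{a} = \mathfrak{s}_4 - \mathfrak{s}_3$ gives $\sigma_{\mathfrak{a}}(\mathfrak{s}_4) = \mathfrak{s}_4 - \mathfrak{a} = \mathfrak{s}_3$. (One should note that $\lspan{\mathfrak{s}_1,\mathfrak{s}_2}\neq 0$ and $\lspan{\mathfrak{s}_3,\mathfrak{s}_4}\neq 0$ by the non-contact hypothesis, so no division by zero occurs.) Since Lie inversions are projective transformations, these identities on homogeneous representatives give the claimed equalities $\sigma_a(s_1)=s_2$ and $\sigma_a(s_4)=s_3$ in $\mathbb{P}(\mathbb{R}^{4,2})$.

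Finally, for uniqueness: suppose $\sigma_b$ is another Lie inversion with $\sigma_b(s_1)=s_2$ and $\sigma_b(s_4)=s_3$. A Lie inversion $\sigma_b$ sends $\lspann{\mathfrak{r}}$ to a point collinear with $\mathfrak{r}$ and $\mathfrak{b}$; so $\sigma_b(s_1)=s_2$ forces $\mathfrak{b} \in \lspann{\mathfrak{s}_1,\mathfrak{s}_2}$ and likewise $\mathfrak{b}\in\lspann{\mathfrak{s}_3,\mathfrak{s}_4}$. Provided these two lines in $\mathbb{P}(\mathbb{R}^{4,2})$ meet in a single point — which follows from the four spheres spanning a suitable configuration, in particular from $\mathfrak{s}_1-\mathfrak{s}_2+\mathfrak{s}_3-\mathfrak{s}_4=0$ together with pairwise non-contact, so that no three of the $\mathfrak{s}_i$ are collinear — we get $\lspann{\mathfrak{b}} = \lspann{\mathfrak{a}}$, hence $\sigma_b = \sigma_a$. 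The main obstacle is the bookkeeping in the norm computation and making the uniqueness step clean; I expect that invoking the linear relation $\mathfrak{s}_1-\mathfrak{s}_2+\mathfrak{s}_3-\mathfrak{s}_4=0$ to collapse the mixed pairings is exactly what makes both parts go through without case analysis.
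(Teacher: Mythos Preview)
The paper does not actually prove this statement: it is presented as a ``Fact'' that is merely recalled as a standard construction, with no argument given. Your proposal therefore supplies what the paper omits, and the approach you take---verify $\lspan{\mathfrak{a},\mathfrak{a}}\neq 0$, check the swaps by direct computation, then pin down uniqueness via $\mathfrak{b}\in\lspann{\mathfrak{s}_1,\mathfrak{s}_2}\cap\lspann{\mathfrak{s}_3,\mathfrak{s}_4}$---is the natural one and is correct.

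Two small remarks. First, in your norm computation you say the expansion reduces to a multiple of $\lspan{\mathfrak{s}_1,\mathfrak{s}_3}$; in fact the direct expansion of $\lspan{\mathfrak{s}_1-\mathfrak{s}_2,\mathfrak{s}_1-\mathfrak{s}_2}$ gives $-2\lspan{\mathfrak{s}_1,\mathfrak{s}_2}$ (as you yourself use a few lines later), so no substitution via the linear relation is needed at that step. Second, the existence part of the Fact tacitly requires the four spheres to be linearly dependent, since otherwise $\lspann{\mathfrak{s}_1,\mathfrak{s}_2}\cap\lspann{\mathfrak{s}_3,\mathfrak{s}_4}=\{0\}$ and no such $\mathfrak{a}$ exists; you implicitly handle this by working under the hypothesis $\mathfrak{s}_1-\mathfrak{s}_2+\mathfrak{s}_3-\mathfrak{s}_4=0$, which is the only case the paper actually uses. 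Your observation that pairwise non-contact forces no three of the $\mathfrak{s}_i$ to be projectively collinear (because a $(1,1)$-plane contains exactly two null lines) is what makes the uniqueness clean.
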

%%%%%%%%%%%%%%%%%%%%%%%%%%%%%%%%%%%%%%%%%%%%%%%%%%%%%%%%%%%%%%%
%% M-Lie inversions
%%%%%%%%%%%%%%%%%%%%%%%%%%%%%%%%%%%%%%%%%%%%%%%%%%%%%%%%%%%%%%%
\noindent\textbf{M-Lie inversions.} Once a M\"obius geometry is fixed, there are distinguished Lie inversions that belong to its underlying transformation group: a Lie inversion $\sigma_a$ that preserves the point sphere complex
will be called an \emph{M-Lie inversion} (cf.\,\cite{blaschke, hertrichjeromin2021discrete}).
\\\\Clearly, any M-Lie inversion is a M\"obius
transformation and generalizes the concept of M\"obius
inversions: if $a$ determines an elliptic linear sphere
complex, the M-Lie inversion becomes a proper M\"obius inversion,
that is, it provides a reflection in the spheres $s^\pm_a$
as given in (\ref{equ_sphere_compl}). We remark that in this case $\sigma_a(s_a^\pm)=s_a^\mp$. However, if the
corresponding linear sphere complex is hyperbolic,
it can be thought of as an antipodal map.

Note that the M-Lie inversion $\sigma_\mathfrak{p}$ with
respect to the point sphere complex $\mathfrak{p}$ reverses
the orientation of all spheres.

\begin{fact}\label{fact_minv_fix}
Let $\sigma_a$ be an M-Lie inversion that maps the sphere $s_1$ onto the sphere $s_2$. Then the following holds:
\begin{itemize}
\item $\sigma_a$ fixes all spheres that are in oriented contact with $s_1$ and $s_2$, as well as all spheres that are orthogonal to $s_1$ and $s_2$;
\item the corresponding linear sphere complex is given by 
\begin{equation*}
\mathfrak{a}:=\lspan{\mathfrak{s}_2, \mathfrak{p}} \mathfrak{s}_1 - \lspan{\mathfrak{s}_1, \mathfrak{p}} \mathfrak{s}_2.
\end{equation*}
\end{itemize}
\end{fact}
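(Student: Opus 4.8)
The whole statement turns on two properties of the vector $\mathfrak{a}$: that it belongs to $\lspann{\mathfrak{s}_1,\mathfrak{s}_2}$ and that it is orthogonal to the point sphere complex, $\lspan{\mathfrak{a},\mathfrak{p}}=0$. So I would establish these first, under the mild standing assumptions that $s_1\neq s_2$, that neither $s_1$ nor $s_2$ is a point sphere, and that $s_2\neq\sigma_\mathfrak{p}(s_1)$ (equivalently $\mathfrak{p}\notin\lspann{\mathfrak{s}_1,\mathfrak{s}_2}$); these hold in all our applications and, as will be clear, also make $\sigma_a$ unique. Since $s_1\neq s_2$ the sphere $s_1$ is not fixed, so $\lspan{\mathfrak{s}_1,\mathfrak{a}}\neq 0$, and writing $\sigma_a(\mathfrak{s}_1)=\lambda\mathfrak{s}_2$ with $\lambda\neq 0$, the definition of the Lie inversion rearranges to
\[
\frac{2\lspan{\mathfrak{s}_1,\mathfrak{a}}}{\lspan{\mathfrak{a},\mathfrak{a}}}\,\mathfrak{a}=\mathfrak{s}_1-\lambda\mathfrak{s}_2 ,
\]
so $\mathfrak{a}=\alpha\mathfrak{s}_1+\beta\mathfrak{s}_2$ for some $(\alpha,\beta)\neq(0,0)$. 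Being an M-Lie inversion, $\sigma_a$ fixes $\mathfrak{p}$ projectively; as $\lspan{\mathfrak{a},\mathfrak{a}}\neq 0$, the vector $\sigma_a(\mathfrak{p})=\mathfrak{p}-\tfrac{2\lspan{\mathfrak{p},\mathfrak{a}}}{\lspan{\mathfrak{a},\mathfrak{a}}}\mathfrak{a}$ is proportional to $\mathfrak{p}$ only if $\lspan{\mathfrak{a},\mathfrak{p}}=0$ or $\mathfrak{a}$ is proportional to $\mathfrak{p}$, and the latter case is excluded by $\mathfrak{p}\notin\lspann{\mathfrak{s}_1,\mathfrak{s}_2}$. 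Hence $\lspan{\mathfrak{a},\mathfrak{p}}=0$, that is $\alpha\lspan{\mathfrak{s}_1,\mathfrak{p}}+\beta\lspan{\mathfrak{s}_2,\mathfrak{p}}=0$, which up to a scalar is precisely $\mathfrak{a}=\lspan{\mathfrak{s}_2,\mathfrak{p}}\mathfrak{s}_1-\lspan{\mathfrak{s}_1,\mathfrak{p}}\mathfrak{s}_2$, the asserted formula. As a consistency check, $\lspan{\mathfrak{a},\mathfrak{a}}=-2\lspan{\mathfrak{s}_1,\mathfrak{p}}\lspan{\mathfrak{s}_2,\mathfrak{p}}\lspan{\mathfrak{s}_1,\mathfrak{s}_2}$ is nonzero: the two scalar factors involving $\mathfrak{p}$ are nonzero since $s_1,s_2$ are not point spheres, and $\lspan{\mathfrak{s}_1,\mathfrak{s}_2}\neq 0$ because $s_1,s_2$ cannot be in oriented contact — otherwise $\lspan{\mathfrak{s}_1,\sigma_a(\mathfrak{s}_1)}=-2\lspan{\mathfrak{s}_1,\mathfrak{a}}^{2}/\lspan{\mathfrak{a},\mathfrak{a}}$ would vanish and force $s_1=s_2$.

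For the first bullet I would invoke only $\mathfrak{a}\in\lspann{\mathfrak{s}_1,\mathfrak{s}_2}$ together with $\lspan{\mathfrak{a},\mathfrak{p}}=0$. As recalled above, $\sigma_a$ fixes every sphere of its linear sphere complex, i.e.\ every $r\in\LL$ with $\lspan{\mathfrak{r},\mathfrak{a}}=0$. If $r$ is in oriented contact with both $s_1$ and $s_2$, then $\lspan{\mathfrak{r},\mathfrak{s}_1}=\lspan{\mathfrak{r},\mathfrak{s}_2}=0$, hence $\lspan{\mathfrak{r},\mathfrak{a}}=0$. If instead $r$ is orthogonal to both $s_1$ and $s_2$, set $\tilde{\mathfrak{r}}:=\mathfrak{r}+\lspan{\mathfrak{r},\mathfrak{p}}\mathfrak{p}$; the two orthogonality conditions are exactly $\lspan{\tilde{\mathfrak{r}},\mathfrak{s}_1}=\lspan{\tilde{\mathfrak{r}},\mathfrak{s}_2}=0$, so $\lspan{\tilde{\mathfrak{r}},\mathfrak{a}}=0$, and because $\lspan{\mathfrak{a},\mathfrak{p}}=0$ we get $\lspan{\mathfrak{r},\mathfrak{a}}=\lspan{\tilde{\mathfrak{r}},\mathfrak{a}}=0$ as well. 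In either case $r$ lies in the linear complex of $\sigma_a$ and is therefore fixed; the same computation also covers a point sphere $v$ lying on both $s_1$ and $s_2$, where $\lspan{\mathfrak{v},\mathfrak{p}}=0$ and $\tilde{\mathfrak{v}}=\mathfrak{v}$.

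The computations themselves are routine, so there is no real obstacle beyond bookkeeping the degenerate configurations, which is precisely why I would fix the three standing hypotheses above from the start. In particular, the assumption $\mathfrak{p}\notin\lspann{\mathfrak{s}_1,\mathfrak{s}_2}$ is what rules out $\sigma_a=\sigma_\mathfrak{p}$ (whose complex is $\mathfrak{p}$, not the vector in the formula) and, by the derivation in the first paragraph, also what guarantees that the M-Lie inversion carrying $s_1$ to $s_2$ is unique and equals the $\sigma_a$ read off from the stated formula.
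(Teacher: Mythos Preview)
Your argument is correct. The paper itself offers no proof of this statement: it is recorded as a ``Fact'' (with a pointer to the general references \cite{blaschke, hertrichjeromin2021discrete}) and used freely thereafter, so there is nothing to compare against. Your derivation that $\mathfrak{a}\in\lspann{\mathfrak{s}_1,\mathfrak{s}_2}$ from $\sigma_a(\mathfrak{s}_1)\in\lspann{\mathfrak{s}_2}$, followed by the M-Lie condition $\lspan{\mathfrak{a},\mathfrak{p}}=0$ to pin down $\mathfrak{a}$ up to scale, is clean and essentially forced; the fixing of contact and orthogonal spheres then drops out immediately from $\mathfrak{a}\in\lspann{\mathfrak{s}_1,\mathfrak{s}_2}$ together with $\mathfrak{a}\perp\mathfrak{p}$, exactly as you write. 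Your explicit listing of the nondegeneracy hypotheses ($s_1\neq s_2$, neither a point sphere, $\mathfrak{p}\notin\lspann{\mathfrak{s}_1,\mathfrak{s}_2}$) is a useful clarification that the paper leaves implicit.
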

%%%%%%%%%%%%%%%%%%%%%%%%%%%%%%%%%%%%%%%%%%%%%%%%%%%%%%%%%%%%%%%%%

%
%%%%%%%%%%%%%%%%%%%%%%%%%%%%%%%%%%%%%%%%%%%%%%%%%%%%%%%%%%%%%%%%%
\noindent\textbf{Circles and orthogonal spheres.} While in Lie sphere geometry the notion of a circle does not exist, once a M\"obius subgeometry is fixed, a circle becomes a well-defined notion. Thus, in what follows, we again fix a point sphere complex $\mathfrak{p}$.

We will describe a \emph{circle} $\Gamma$ by an orthogonal splitting of $\mathbb{R}^{4,2}$,
\begin{equation}\label{equ_circle}
\Gamma=(\gamma, \gamma^\perp) \in G_{(2,1)}^\mathcal{P} \times G_{(2,1)},  
\end{equation}
where $G_{(2,1)}^\mathcal{P}$ denotes the set of all $(2,1)$-planes that are orthogonal to the point sphere complex $\mathfrak{p}$; thus, all spheres contained in $\gamma \in G_{(2,1)}^\mathcal{P}$ are point spheres, namely, the points lying on the circle. These point spheres then lie on the spheres in $\gamma^\perp$.
\\\\In what follows, we recall some facts on spheres and circles that intersect orthogonally. Detailed proofs of these statements can be found for example in \cite{coolidge} or \cite{hertrichjeromin2021discrete}.

Firstly, note that a circle $\Gamma=(\gamma, \gamma^\perp)$
intersects a sphere $s \in \LL$ orthogonally if and only if
$s$ intersects all spheres in $\gamma^\perp$ orthogonally. Furthermore:

\begin{fact}\label{fact_circle_two_all}
If a circle intersects two elements of a contact element orthogonally, then it intersects all spheres of the contact element orthogonally. 
\end{fact}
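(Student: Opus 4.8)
The plan is to reduce the statement to the facts already recalled about orthogonality between a circle and a linear sphere complex. Recall that a contact element is a line $\lspann{s_1,s_2}$ in the Lie quadric, i.e.\ a pencil of spheres in oriented contact; concretely the spheres of the contact element have homogeneous coordinates of the form $\lambda \mathfrak{s}_1 + \mu \mathfrak{s}_2$ for $(\lambda,\mu)\neq(0,0)$. Suppose the circle $\Gamma=(\gamma,\gamma^\perp)$ intersects two spheres $s_1, s_2$ of the contact element orthogonally; after relabelling we may take these to be representatives spanning the pencil.

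The key step is the criterion recalled just above the statement: $\Gamma$ intersects a sphere $s\in\LL$ orthogonally if and only if $s$ is orthogonal to every sphere in $\gamma^\perp$. Here "orthogonal" for two spheres $s$ and $t$ with $t$ a point sphere ($\lspan{\mathfrak t,\mathfrak p}=0$) means $\lspan{\mathfrak s,\mathfrak t}=0$ by the orthogonality formula in the excerpt, and more generally it is the vanishing of $\lspan{\mathfrak s,\mathfrak t+\lspan{\mathfrak t,\mathfrak p}\mathfrak p}$; in either case, for fixed $t$ this is a \emph{linear} condition on $\mathfrak s$. So I would argue: orthogonality of a sphere $s$ with $\Gamma$ is equivalent to $\mathfrak s$ lying in the intersection of the hyperplanes $\{\mathfrak t + \lspan{\mathfrak t,\mathfrak p}\mathfrak p\}^\perp$ as $t$ ranges over $\gamma^\perp$ — a linear subspace of $\mathbb{R}^{4,2}$. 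Since $\mathfrak s_1$ and $\mathfrak s_2$ both lie in this subspace, so does every linear combination $\lambda\mathfrak s_1+\mu\mathfrak s_2$, hence every sphere in the contact element $\lspann{s_1,s_2}$ is orthogonal to $\Gamma$.

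Concretely, without invoking the abstract subspace: pick any sphere $t\in\gamma^\perp$ and set $\mathfrak t' := \mathfrak t + \lspan{\mathfrak t,\mathfrak p}\mathfrak p$. Orthogonality of $s_i$ with $\Gamma$ gives $\lspan{\mathfrak s_i,\mathfrak t'}=0$ for $i=1,2$ (using the criterion that it suffices to test against the spheres of $\gamma^\perp$). Then for any sphere $s=\lambda s_1+\mu s_2$ of the contact element,
\[
\lspan{\lambda\mathfrak s_1+\mu\mathfrak s_2,\ \mathfrak t'} = \lambda\lspan{\mathfrak s_1,\mathfrak t'}+\mu\lspan{\mathfrak s_2,\mathfrak t'}=0,
\]
so $s$ is orthogonal to every $t\in\gamma^\perp$ and therefore to $\Gamma$.

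I do not expect a genuine obstacle here; the content is essentially the bilinearity of $\lspan{.,.}$ combined with the already-recalled reduction "$\Gamma\perp s$ iff $s\perp\gamma^\perp$". The only point requiring a little care is the degenerate case where $s_1$ and $s_2$ are point spheres or where some sphere of the pencil is a point sphere: one should check that the orthogonality criterion used ($\lspan{\mathfrak s,\mathfrak t'}=0$) is the correct one uniformly, including the convention from the excerpt that a point sphere is orthogonal to a sphere exactly when it lies on it. Since $\mathfrak t'$ is well defined for any $t\in\gamma^\perp$ regardless of whether $s_i$ are point spheres, and the pairing is linear in the first slot, the computation above covers all cases; this is the step I would state explicitly to keep the argument airtight.
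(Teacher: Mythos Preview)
The paper does not actually prove this statement: it is presented as a ``Fact'' with the remark that detailed proofs can be found in the cited references (Coolidge; Hertrich-Jeromin et al.). So there is no in-paper proof to compare against.

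Your argument is correct and is the standard one: the criterion ``$\Gamma\perp s$ iff $s\perp t$ for all $t\in\gamma^\perp$'' reduces orthogonality with $\Gamma$ to a family of conditions $\lspan{\mathfrak{s},\mathfrak{t}+\lspan{\mathfrak{t},\mathfrak{p}}\mathfrak{p}}=0$ that are linear in $\mathfrak{s}$, hence the spheres orthogonal to $\Gamma$ form a linear subspace, and any sphere $\lambda\mathfrak{s}_1+\mu\mathfrak{s}_2$ in the contact element inherits orthogonality from $s_1$ and $s_2$. The discussion of degenerate (point-sphere) cases is fine but could be shortened: the orthogonality condition $\lspan{\mathfrak{s},\mathfrak{t}+\lspan{\mathfrak{t},\mathfrak{p}}\mathfrak{p}}=0$ is manifestly linear in $\mathfrak{s}$ regardless of whether $\lspan{\mathfrak{s},\mathfrak{p}}$ vanishes, so no separate case analysis is needed.
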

\noindent Given a sphere $s \in \LL$ and two point spheres $m_1, m_2 \perp s$ on the sphere $s$. Then the circle $\Gamma=(\gamma, \gamma^\perp)$ that goes through $m_1$ and $m_2$ and intersects the sphere~$s$ orthogonally is given by the $(2,1)$-plane
\begin{equation}\label{equ_orth_circle}
\gamma:=\lspann{\mathfrak{m}_1, \mathfrak{m}_2, \mathfrak{s}+ \lspan{\mathfrak{s}, \mathfrak{p}}\mathfrak{p}}.
\end{equation}
%%%%%%%%%%
\begin{figure}
\includegraphics[scale=0.2]{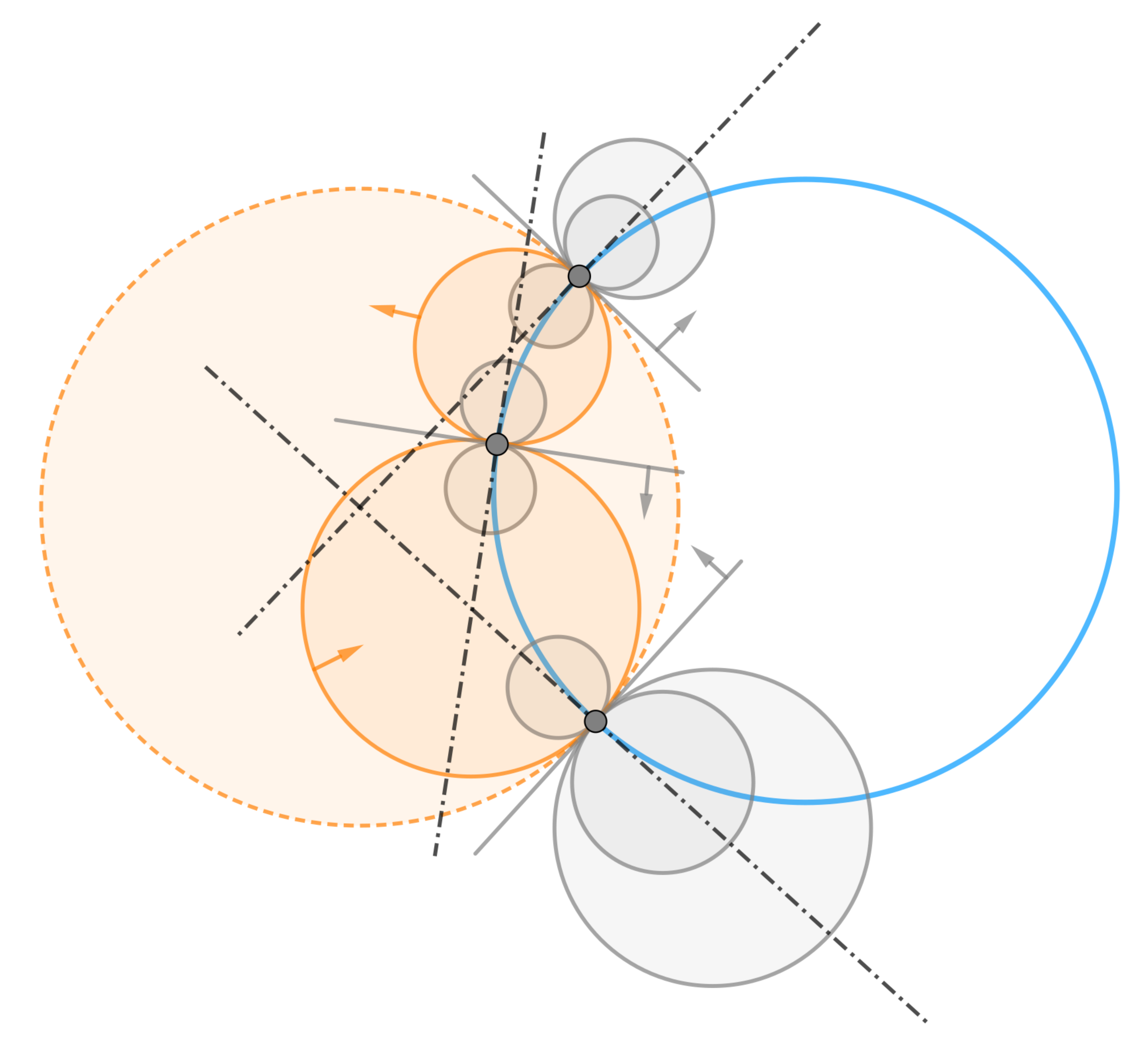}
\caption{Any two contact elements (gray) that intersect a circle (blue) orthogonally share, up to orientation, a common sphere (orange).}\label{fig_orth_cont_el}
\end{figure}
%%%%%%%%%%%%%%%%%%%%%%%%%%%%%%%%%%%%%%%%%%%%%%%%%%%%%%%%%%
\noindent The spheres that are orthogonal to a fixed circle satisfy the following property (see Figure \ref{fig_orth_cont_el}):
\begin{fact}\label{fact_orth_spheres}
Let $\Gamma$ be a circle, then the spheres that intersect $\Gamma$ in a fixed point $\mathfrak{m} \in \gamma$ of the circle lie in two contact elements $m \in f_m, \tilde{f}_m \in \mathcal{Z}$ that coincide up to orientation. 

Moreover, for any two points $\mathfrak{m}, \mathfrak{n} \in \gamma$ of the circle, two of the orthogonal contact elements share a common sphere, that is,
\begin{equation*}
f_m \cap f_n \neq \emptyset \ \ \ \text{ or } \ \  f_m \cap \sigma_{\mathfrak{p}}(f_n) \neq \emptyset.
\end{equation*}
\end{fact}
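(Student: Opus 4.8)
The plan is to work directly with the orthogonal splitting $\Gamma = (\gamma, \gamma^\perp)$ and reduce everything to linear algebra in $\mathbb{R}^{4,2}$. First I would fix a point $\mathfrak{m} \in \gamma$ of the circle and describe the set of spheres through this point orthogonally to $\Gamma$. By the remark preceding the statement, a sphere $s \in \LL$ is orthogonal to $\Gamma$ exactly when it is orthogonal to every sphere in $\gamma^\perp$; adding the requirement $\mathfrak{m} \perp \mathfrak{s}$ (the point sphere lies on the sphere) means $\mathfrak{s} \in \{\mathfrak{m}\}^\perp \cap (\gamma^\perp)^{\perp_{\mathfrak{p}}}$, where the orthogonality to $\gamma^\perp$ is the M\"obius (i.e.\ $\mathfrak{p}$-twisted) one. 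Since $\gamma^\perp$ is a $(2,1)$-plane and $\mathfrak{m}$ is a null vector in the complementary $(2,1)$-plane $\gamma$, a dimension count shows that the space of admissible sphere vectors is a $2$-plane $V_m \subset \mathbb{R}^{4,2}$; one checks that $V_m$ contains $\mathfrak{m}$ and has signature forcing $V_m \cap \LL$ to be a pair of points, i.e.\ exactly two lines (contact elements) through $\mathfrak{m}$. These two contact elements $f_m, \tilde f_m$ differ only by orientation because $\sigma_{\mathfrak{p}}$ preserves $V_m$ (it fixes $\mathfrak{m}$ and acts as $\pm\mathrm{id}$ on the relevant subspaces), which gives the first assertion; here the explicit formula (\ref{equ_orth_circle}) for the $(2,1)$-plane $\lspann{\mathfrak{m}_1, \mathfrak{m}_2, \mathfrak{s}+\lspan{\mathfrak{s},\mathfrak{p}}\mathfrak{p}}$ is a convenient bookkeeping device.

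For the second assertion I would pick two points $\mathfrak{m}, \mathfrak{n} \in \gamma$ and a third auxiliary point $\mathfrak{o} \in \gamma$ so that $\{\mathfrak{m},\mathfrak{n},\mathfrak{o}\}$ spans $\gamma$; then the $(2,1)$-plane of any circle-orthogonal sphere $s$ is $\lspann{\mathfrak{m},\mathfrak{n}, \mathfrak{s}+\lspan{\mathfrak{s},\mathfrak{p}}\mathfrak{p}}$ by (\ref{equ_orth_circle}). The contact element $f_m$ is spanned by $\mathfrak{m}$ together with the ``M\"obius part'' $\mathfrak{s}^{\mathrm{M}} := \mathfrak{s}+\lspan{\mathfrak{s},\mathfrak{p}}\mathfrak{p}$ of one such sphere through $\mathfrak{m}$, and similarly $f_n = \lspann{\mathfrak{n}, \mathfrak{t}^{\mathrm{M}}}$ for a sphere $t$ through $\mathfrak{n}$. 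A sphere lying in both contact elements must be a linear combination $\alpha \mathfrak{m} + \beta \mathfrak{s}^{\mathrm{M}} = \gamma' \mathfrak{n} + \delta \mathfrak{t}^{\mathrm{M}}$ that is null; the point is that the $(2,1)$-plane $\gamma^{\perp}$ of spheres through the whole circle has signature $(2,1)$, so its light cone is a projective conic, and $f_m, f_n$ are rulings/pencils meeting it — by the standard fact that two distinct pencils inside a $(2,1)$-plane's null cone always share a common null line (the two families of lines on the Lie quadric restricted here degenerate to a single null-point pair), one obtains a common sphere, up to the orientation ambiguity which is exactly the alternative $f_m \cap \sigma_{\mathfrak{p}}(f_n) \neq \emptyset$.

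Concretely, I expect the cleanest route is Fact~\ref{fact_circle_two_all}: the common sphere $s_0$ we are looking for is the one sphere that is orthogonal to $\Gamma$ \emph{and} belongs to a given contact element — and since a contact element through a point of $\Gamma$ that meets $\Gamma$ orthogonally is already determined by two of its spheres being orthogonal to $\Gamma$, a single sphere of $f_m$ that happens also to pass through $\mathfrak{n}$ must coincide, up to orientation, with a sphere of $f_n$. So the argument reduces to: given the contact element $f_m$, the sphere in $f_m$ passing through the second circle point $\mathfrak{n}$ is unique up to orientation (the contact element is a pencil, and ``passing through $\mathfrak{n}$'' is one linear condition on the pencil), and that sphere is automatically orthogonal to $\Gamma$ since $f_m$ is; hence it lies in $f_n$ or in $\sigma_{\mathfrak{p}}(f_n)$.

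The main obstacle is not any single computation but keeping the orientation bookkeeping straight: every statement about spheres in $\LL$ is really about null \emph{lines}, and the two-valuedness ``$f_m \cap f_n$ or $f_m \cap \sigma_{\mathfrak{p}}(f_n)$'' comes precisely from the fact that the M\"obius data (the point spheres and the unoriented spheres in $\gamma^\perp$) do not see orientation, so the lift to oriented spheres in $\LL$ is ambiguous by $\sigma_{\mathfrak{p}}$. The technical heart is therefore the signature claim for $V_m$ (that it meets $\LL$ in exactly two points) and the verification that $\sigma_{\mathfrak{p}}$ maps $V_m$ to itself; once those are in place, both assertions follow by the dimension counts and the uniqueness-of-pencil-sphere argument sketched above.
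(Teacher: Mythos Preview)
The paper does not give its own proof of Fact~\ref{fact_orth_spheres}: it is stated as a recalled fact, with detailed proofs deferred to the references \cite{coolidge} and \cite{hertrichjeromin2021discrete}. So there is no ``paper's proof'' to compare against; your plan stands or falls on its own.

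Your argument for the \emph{second} assertion is clean and correct: in the pencil $f_m$ the condition ``passes through $\mathfrak{n}$'' is a single linear condition, hence singles out one sphere (up to the orientation ambiguity), and since every sphere of $f_m$ is orthogonal to $\Gamma$ this sphere must lie in $f_n$ or $\sigma_{\mathfrak{p}}(f_n)$. That is exactly the right mechanism.

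For the \emph{first} assertion your dimension bookkeeping is off. The spheres orthogonal to $\Gamma$ fill a $4$-dimensional linear subspace $\mathcal{S}\subset\mathbb{R}^{4,2}$ (this is precisely the content of Lemma~\ref{lem_orth_contact_el} that immediately follows the Fact), and imposing $\lspan{\mathfrak{s},\mathfrak{m}}=0$ cuts this down to a $3$-dimensional subspace $V_m$ (not a ``$2$-plane''), which contains the null vector $\mathfrak{m}$ in its radical. The null cone of such a degenerate $(1,1,1)$-space is a pair of isotropic $2$-planes through $\mathfrak{m}$, and \emph{those} are the two contact elements $f_m,\tilde f_m$; the statement ``$V_m\cap\LL$ is a pair of points'' is not what you want. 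Once you correct this, the invariance under $\sigma_{\mathfrak{p}}$ is immediate (it fixes $\mathfrak{m}\in\lspann{\mathfrak{p}}^\perp$ and preserves the orthogonality conditions defining $V_m$), and the two isotropic planes are swapped, giving $\tilde f_m=\sigma_{\mathfrak{p}}(f_m)$ as claimed.
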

\noindent Thus, the spheres orthogonal to a fixed circle lie in a 4-dimensional subspace that can be constructed in the following way:
\begin{lem}\label{lem_orth_contact_el}
Let $f=\lspann{r,s}$ and $\hat{f}=\lspann{r, \hat{s}}$ be two contact elements that share the common sphere~$r$. Then the spheres orthogonal to the circle that intersects the sphere $r$ in the two point spheres of $f$ and $\hat{f}$ orthogonally are the spheres in the subspace $\mathcal{S}:=\lspann{\mathfrak{r}, \mathfrak{s}, \hat{\mathfrak{s}}, \mathfrak{p}} \subset \mathbb{R}^{4,2}$.
\end{lem}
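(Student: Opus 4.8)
The plan is to verify directly that the circle $\Gamma$ described in the statement is well defined, and then to show the two inclusions: first that every sphere in $\mathcal{S}=\lspann{\mathfrak{r}, \mathfrak{s}, \hat{\mathfrak{s}}, \mathfrak{p}}$ is orthogonal to $\Gamma$, and conversely that every sphere orthogonal to $\Gamma$ lies in $\mathcal{S}$. To set up coordinates, I would first observe that since $f=\lspann{r,s}$ and $\hat f = \lspann{r,\hat s}$ are contact elements (lines in $\mathcal{Z}$) through $r$, each of them contains a unique point sphere: call them $m_1 \in f$ and $m_2 \in \hat f$, characterized by $\lspan{\mathfrak{m}_i,\mathfrak{p}}=0$. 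Thus we may replace the generators and write $f=\lspann{\mathfrak{r},\mathfrak{m}_1}$, $\hat f=\lspann{\mathfrak{r},\mathfrak{m}_2}$, which shows $\mathcal{S}=\lspann{\mathfrak{r},\mathfrak{m}_1,\mathfrak{m}_2,\mathfrak{p}}$. By~(\ref{equ_orth_circle}) the circle $\Gamma=(\gamma,\gamma^\perp)$ through $m_1,m_2$ meeting $r$ orthogonally has $\gamma=\lspann{\mathfrak{m}_1,\mathfrak{m}_2,\mathfrak{r}+\lspan{\mathfrak{r},\mathfrak{p}}\mathfrak{p}}$; note this needs $m_1\neq m_2$, i.e.\ that the two contact elements are genuinely distinct, which is implicit in the hypothesis.

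For the first inclusion, recall from Fact~\ref{fact_circle_two_all} (together with the remark preceding it) that a sphere is orthogonal to $\Gamma$ exactly when it is orthogonal to every sphere in $\gamma^\perp$; equivalently, I claim a sphere $t\in\LL$ is orthogonal to $\Gamma$ iff $\gamma \subset \{\mathfrak{t}+\lspan{\mathfrak{t},\mathfrak{p}}\mathfrak{p}\}^\perp$, using the orthogonality criterion $\lspan{\mathfrak{s}_1,\mathfrak{s}_2+\lspan{\mathfrak{s}_2,\mathfrak{p}}\mathfrak{p}}=0$ from the Preliminaries. Hence I would compute: for $\mathfrak{t}\in\{\mathfrak{r},\mathfrak{m}_1,\mathfrak{m}_2,\mathfrak{p}\}$ and each generator $\mathfrak{g}\in\{\mathfrak{m}_1,\mathfrak{m}_2,\mathfrak{r}+\lspan{\mathfrak{r},\mathfrak{p}}\mathfrak{p}\}$ of $\gamma$, check $\lspan{\mathfrak{g},\mathfrak{t}+\lspan{\mathfrak{t},\mathfrak{p}}\mathfrak{p}}=0$. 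This uses only the relations $\lspan{\mathfrak{m}_i,\mathfrak{p}}=0$, $\lspan{\mathfrak{m}_i,\mathfrak{m}_i}=0$, $\lspan{\mathfrak{m}_i,\mathfrak{r}}=0$ (point spheres lie on $r$, which is oriented contact, but as point spheres "orthogonal" means "on", and in fact $\lspan{\mathfrak{r},\mathfrak{m}_i}=0$ since $m_i\in f$ implies $\lspan{\mathfrak{r},\mathfrak{m}_i}=0$), $\lspan{\mathfrak{p},\mathfrak{p}}=-1$, and bilinearity — a short routine check. This shows $\mathcal{S}\cap\LL$ consists of spheres orthogonal to $\Gamma$.

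For the reverse inclusion, the point is a dimension count: $\mathcal{S}$ is a $4$-dimensional subspace, and by Fact~\ref{fact_orth_spheres} the spheres orthogonal to a fixed circle span exactly a $4$-dimensional subspace (the contact elements $f_m$ over the points $m\in\gamma$, glued along shared spheres up to orientation, sweep out $4$ dimensions). Since $\mathcal{S}$ is a $4$-dimensional space all of whose null directions are orthogonal to $\Gamma$, and the space of spheres orthogonal to $\Gamma$ is also $4$-dimensional, equality of spans follows — provided one checks that $\mathcal{S}$ contains enough light-like directions to span it, which holds because a $(2,1)$- or $(3,1)$-signature subspace is spanned by its null cone and one can read off the signature of $\mathcal{S}=\lspann{\mathfrak{r},\mathfrak{m}_1,\mathfrak{m}_2,\mathfrak{p}}$ from the Gram matrix in the above relations. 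The main obstacle is precisely this last nondegeneracy/signature bookkeeping: one must confirm that $\mathcal{S}$ is a genuine $4$-dimensional subspace (the four generators are independent, which fails only in degenerate configurations excluded by the hypothesis that $f\neq\hat f$ and $r$ is not a point sphere) and that it is nondegenerate of Lorentzian-type signature, so that Fact~\ref{fact_orth_spheres}'s dimension statement can be matched against it cleanly. Once that is in place, the two inclusions give $\mathcal{S}\cap\LL = \{\text{spheres orthogonal to }\Gamma\}$, which is the assertion.
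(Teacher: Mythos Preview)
Your overall strategy matches the paper's: rewrite $\mathcal{S}=\lspann{\mathfrak{r},\mathfrak{m}_1,\mathfrak{m}_2,\mathfrak{p}}$ using the point spheres $m_1\in f$, $m_2\in\hat f$, observe that $\gamma=\lspann{\mathfrak{m}_1,\mathfrak{m}_2,\mathfrak{r}+\lspan{\mathfrak{r},\mathfrak{p}}\mathfrak{p}}\subset\mathcal{S}$, and then argue both inclusions. For the reverse inclusion, the paper invokes Fact~\ref{fact_orth_spheres} slightly more directly (every orthogonal contact element $f_n$ shares a sphere with $f$ or $\sigma_{\mathfrak{p}}(f)$, and both of these together with the circle point $n\in\gamma$ already lie in $\mathcal{S}$), while your dimension count is a legitimate alternative.

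However, your argument for the \emph{first} inclusion contains a genuine error: you have the orthogonality criterion backwards. You claim that a sphere $t$ is orthogonal to $\Gamma$ if and only if $\gamma\subset\{\mathfrak{t}+\lspan{\mathfrak{t},\mathfrak{p}}\mathfrak{p}\}^\perp$, i.e.\ $\mathfrak{t}+\lspan{\mathfrak{t},\mathfrak{p}}\mathfrak{p}\in\gamma^\perp$. But orthogonality to $\Gamma$ means orthogonality to every sphere $v\in\gamma^\perp$, which (using the symmetric form $\lspan{\mathfrak{t},\mathfrak{v}}+\lspan{\mathfrak{t},\mathfrak{p}}\lspan{\mathfrak{v},\mathfrak{p}}=0$) is equivalent to $\mathfrak{t}+\lspan{\mathfrak{t},\mathfrak{p}}\mathfrak{p}\in(\gamma^\perp)^\perp=\gamma$, \emph{not} $\gamma^\perp$. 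Your stated criterion is actually the condition that $t$ \emph{contains} the circle. Concretely, the ``routine check'' you propose fails already for $\mathfrak{t}=\mathfrak{r}$ and $\mathfrak{g}=\mathfrak{r}+\lspan{\mathfrak{r},\mathfrak{p}}\mathfrak{p}$:
\[
\lspan{\mathfrak{g},\,\mathfrak{t}+\lspan{\mathfrak{t},\mathfrak{p}}\mathfrak{p}}
=\lspan{\mathfrak{r}+\lspan{\mathfrak{r},\mathfrak{p}}\mathfrak{p},\,\mathfrak{r}+\lspan{\mathfrak{r},\mathfrak{p}}\mathfrak{p}}
=\lspan{\mathfrak{r},\mathfrak{p}}^2\neq 0.
\]
The fix is exactly what the paper does: show that for any $v\in\gamma^\perp$ one has $\mathfrak{v}+\lspan{\mathfrak{v},\mathfrak{p}}\mathfrak{p}\in\mathcal{S}^\perp$ (equivalently, for $t\in\mathcal{S}$ the projection $\mathfrak{t}+\lspan{\mathfrak{t},\mathfrak{p}}\mathfrak{p}$ lands in $\gamma=\mathcal{S}\cap\mathfrak{p}^\perp$). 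Once you swap $\gamma$ for $\gamma^\perp$ in your criterion, the rest of your plan goes through.
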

\begin{proof} Let $p_1 \in f \cap \PL$ and $\hat{p}_1 \in \hat{f} \cap \PL$ denote the two point spheres in the contact elements $f=\lspann{r,s}$ and $\hat{f}=\lspann{r, \hat{s}}$. By (\ref{equ_orth_circle}), the circle $\Gamma=(\gamma, \gamma^\perp)$ through $p_1$ and $\hat{p}_1$ that intersects $r = f \cap \hat{f}$ orthogonally is provided by 
\begin{equation*}
\gamma=\lspann{\mathfrak{p}_1, \hat{\mathfrak{p}}_1, \mathfrak{r}+ \lspan{\mathfrak{r}, \mathfrak{p}}\mathfrak{p}}
\end{equation*}
and, thus, the circle points of the indicated circle indeed lie in $\mathcal{S}=\lspann{\mathfrak{r}, \mathfrak{s}, \hat{\mathfrak{s}}, \mathfrak{p}}$.

Further, suppose that $v \in \gamma^\perp \cap \LL$, then $\mathfrak{v}+\lspan{\mathfrak{v},\mathfrak{p}}\mathfrak{p} \in \mathcal{S}^\perp$ and we conclude that any sphere in~$\mathcal{S}$ is orthogonal to the circle~$\Gamma$.

Moreover, since the circle points of~$\Gamma$ and all spheres of the contact element~$f$ lie in $\mathcal{S}$, by Fact~\ref{fact_orth_spheres}, all spheres orthogonal to~$\Gamma$ are contained in this subspace.
\end{proof}
%%%%%%%%%%%%%%%%%%%%%%%%%%%%%%%%%%%%%%%%%%%%%%%%%%%%
%
%
%%%%%%%%%%%%%%%%%%%%%%%%%%%%%%%%%%%%%%%%%%%%%%%%%%%%%%%%%%%%%
\noindent\\\textbf{M\"obius geometric sphere pencils.} Let $s_1, s_2 \in \LL$ be two spheres such that $\mathfrak{s}_1 \notin \lspann{\mathfrak{s}_2, \mathfrak{p}}$, then the spheres in the 3-dimensional space
\begin{equation*}
\mathcal{M}:=\lspann{\mathfrak{s}_1, \mathfrak{s}_2, \mathfrak{p}} \subset \mathbb{R}^{4,2}
\end{equation*}
define an \emph{M-sphere pencil}. Depending on the position of the two spheres, we obtain three types:
\begin{itemize}
\item \emph{$0$-pencil}: $s_1$ and $s_2$ intersect in a circle, then all spheres in $\mathcal{M}$ contain the points of this circle of intersection;
\item \emph{$1$-pencil}: $s_1$ and $s_2$ touch, then the spheres in $\mathcal{M}$ yield two contact elements that coincide up to orientation
\item \emph{$2$-pencil}: $s_1$ and $s_2$ do not intersect in a circle or a point, then none of the spheres in the pencil $\mathcal{M}$ do.
\end{itemize}
Observe that if a sphere is orthogonal to the two spheres $s_1$ and $s_2$, then it is also orthogonal to $\sigma_p(s_1)$ and, hence, orthogonal to all spheres in the M-sphere pencil.
%%%%%%%%%%%%%%%%%%%%%%%%%%%%%%%%%%%%%%%%%%%%%%%%%%%%%%%%%%%%%%%%%%%%%%%%%%%%%
%
%%%%%%%%%%%%%%%%%%%%%%%%%%%%%%%%%%%%%%%%%%%%%%%%%%%%%%%%%%%%%%%%%
\section{Dupin cyclides}\label{sect_dupin_cyclides}
\noindent Dupin cyclides were introduced by Dupin in his dissertation in 1803 as surfaces with two families of circular curvature lines~\cite{dupin1822}. Equivalently, Dupin cyclides are characterized by the existence of two curvature sphere congruences that degenerate to two 1-parameter families of spheres. Hence, a Dupin cyclide can be understood as a channel surface with respect to two coordinate directions.
The latter characterization allows an elegant description of Dupin cyclides in the framework of  Lie sphere geometry \cite{blaschke, book_cecil, Pinkall1985}: the vectors representing the spheres of each curvature sphere congruence lie in the intersection of a $(2,1)$-plane with the Lie quadric. Thus, any Dupin cyclide is given by an orthogonal $(2,1)$-splitting $D_1 \oplus_\perp D_2$ of $\mathbb{R}^{4,2}$.

This description is invariant under Lie sphere transformations and, moreover, reveals that any two Dupin cyclides are Lie equivalent.
\\\\Any projection of a Dupin cyclide (represented as a Legendre map) to a M\"obius geometry modelled on $\lspann{\mathfrak{p}}^\perp$ will provide a point sphere map with two families of circular curvature lines. Since in this work we are mainly interested in orthogonal systems, a notion that belongs to M\"obius geometry, we are mainly interested in those projections of Dupin cyclides. Thus, in what follows we will fix a point sphere complex~$\mathfrak{p}$ to distinguish a M\"obius subgeometry. 
\subsection{Parametrizations of Dupin cyclides via M-Lie inversions} \label{subset_param}
%%%%%%%%%%%%%%%%%%%%%%%%%%%%%%%%%%%%%%%%%%%%%%%%%%%%%%%%%%
In \cite{spherical} Legendre maps with a family of spherical curvature lines were constructed with the help of a Lie sphere geometric evolution. In particular, it was discussed that any such surface is constituted by a prescribed initial spherical curve and a certain 1-parameter family of Lie sphere transformations. 

In this subsection we show that for Dupin cyclides the evolution map simplifies: any Dupin cyclide can be constructed by evolving an initial circle by a suitable 1-parameter family of M-Lie inversions.
\\\\Suppose that 
\begin{equation*}
\Delta= D_1 \oplus_\perp D_2 \in G_{(2,1)} \times G_{(2,1)}
\end{equation*}
represents a Dupin cyclide with the two 1-parameter families of curvature spheres 
\begin{equation*}
I^{1} \ni u \mapsto s^1(u) \in D_1 \cap \LL \ \ \text{and } \  I^{2} \ni  v \mapsto s^2(v) \in D_2 \cap \LL.
\end{equation*}
A curvature sphere $s$ of $\Delta$ is said to be \emph{regular} (with respect to the point sphere complex $\mathfrak{p}$), if it is not a point sphere: $\lspan{\mathfrak{s},\mathfrak{p}} \neq 0$; otherwise it is called \emph{singular}. 

This description of Dupin cyclides also includes circles, namely if the point sphere complex  $\mathfrak{p}$ is contained in one of the $(2,1)$-planes $D_1$ or $D_2$. Then one of the curvature sphere congruences consists of point spheres, the points of the circle (cf.\,(\ref{equ_circle})).

Apart from this special case, the number of singular curvature spheres of a Dupin cyclide is limited: there are at most two point spheres in one of the two curvature sphere congruences. In our studies we sometimes have to exclude these singularities and therefore fix the following notation 
\begin{equation*}
I^{i\star}:=\{t \in I^i \ | \ \lspan{\mathfrak{s}^i(t), \mathfrak{p}} \neq 0\}.
\end{equation*} 
\ \\A parametrization of the Legendre map of $\Delta$ that respects the curvature leaves is provided by
\begin{equation*}
I^1 \times I^2  \ni (u,v) \mapsto \lspann{\mathfrak{s}^1(u),\mathfrak{s}^2(v)} \in \mathcal{Z}.
\end{equation*}
%%%%%%%%%%%%%%%%%%%%%%%%%%%%%%%%%%%%%%%%%%%%%%%%%%%%%%%%%%%%%%%%%
%%%%%%%%%%%%%%%% Curvature line parametrization %%%%%%%%%%%%%%%%%
%%%%%%%%%%%%%%%%%%%%%%%%%%%%%%%%%%%%%%%%%%%%%%%%%%%%%%%%%%%%%%%%%
Away from singularities, a curvature line parametrization of the point sphere map of $\Delta$ can be obtained by the following Lie geometric evolution: let $s^i_0:=s^i(t_0) \in D_i$ be a regular curvature sphere of $\Delta$ and consider the 1-parameter family of M-Lie inversions $I^{i\star} \ni t \mapsto \sigma^i_{t}$ with respect to the linear sphere complexes defined by
\begin{equation}\label{equ_evolution_dupin}
  I^{i\star} \ni  t \mapsto \mathfrak{a}^i_t:=\lspan{\mathfrak{s}^i(t),\mathfrak{p}}\mathfrak{s}^i_0 - \lspan{\mathfrak{s}^i_0,\mathfrak{p}}\mathfrak{s}^i(t).
\end{equation} 
This 1-parameter family of M-Lie inversions will be called an  \textit{evolution map of $\Delta$ based at $t_0$}. 

Note that $\mathfrak{a}_t^i \in D_i \cap \lspann{\mathfrak{p}}^\perp$, hence the vectors $\mathfrak{a}_t^i$ that induce the evolution map lie in a 2-dimensional subspace of $\mathbb{R}^{4,2}$. Moreover, this evolution yields a parametrization of the related curvature sphere congruence via $t \mapsto \sigma^i_t(s^i_0)$.
\\\\Further, suppose that the map $I \ni r \mapsto c_0(r) \in G_{(2,1)}^\mathcal{P}$ parametrizes the points of the curvature circle that lies on the curvature sphere $s^i_0 \in D_i$. Then, reflection of this circle~$c_0$ via the M-Lie inversions $t \mapsto \sigma^i_{t}$ provides a 1-parameter family of parametrized circles via
\begin{equation}
I \times I^{i\star} \ni (r,t) \mapsto f(r,t):=\sigma^i_{t}(c_0 (r)) \in \PL.
\end{equation}
Since the M-Lie inversions $\sigma^i_{t}$ interchange the curvature spheres in $D_i$ and fix the curvature spheres in $D_j$, $i \neq j$, we conclude that $f$ indeed provides a curvature line parametrization of $\Delta$.
%%%%%%%%%%%%%%%%%%%%%%%%%%%%%%%%%%%%%%%%%%%%%%%%%%%
%%%%%%%% quer-spheres %%%%%%%%%%%%%%%%%%%%%%%%%%%%
%%%%%%%%%%%%%%%%%%%%%%%%%%%%%%%%%%%%%%%%%%%%%%%%%%
\\\\The evolution map also transports further geometric data related to the Dupin cyclide along the surface: for example, consider the quer-spheres of a Dupin cyclide.

Suppose that $I^{i\star} \ni t \mapsto \mathfrak{s}^i(t)$, $\lspan{\mathfrak{s}^i,\mathfrak{p}}=1$, parametrizes one of the curvature sphere congruences. For each curvature sphere $s^i(t)$, there exists up to orientation a unique sphere that intersects $s^i(t)$ orthogonally in the corresponding curvature circle. These so-called \textit{quer-spheres}~\cite{blaschke} are provided by
\begin{equation*}
I^{i\star} \ni t \mapsto \mathfrak{q}^{i\pm}(t):= \partial_t\mathfrak{s}^i(t) \pm \sqrt{\lspan{\partial_t\mathfrak{s}^i(t), \partial_t\mathfrak{s}^i(t)}} \mathfrak{p}.
\end{equation*}
For an illustration of the quer-sphere congruences of a Dupin cyclide see the M-sphere pencils in Figure~\ref{fig_dc_evolution_quer}.

Since the M-Lie inversions $t \mapsto \sigma_t^i$ map curvature circles and curvature spheres of $\Delta$ onto each other and preserve orthogonality, we conclude that the evolution map also propagates the quer-spheres $q^{i\pm}$ along the Dupin cyclide
\begin{equation*}
I^{i\star} \ni t \mapsto q^i(t):=\sigma^i_t(q^{i+}(t_0)).
\end{equation*} 
Note that each quer-sphere congruence lies in an M-sphere pencil. The type of sphere pencil depends on the number of singularities in the corresponding curvature sphere family. Detailed configurations of the quer-spheres for the three types of Dupin cyclides $\Delta = D_1 \oplus_\perp D_2$ are summarized in the following table:
\\\begin{center}\begin{tabular}{|p{3cm}|p{9cm}|}
\hline
\textbf{Singularities} & \textbf{quer-sphere congruence $\mathcal{Q}_i$} \\\hline
\end{tabular}
\\[5pt]\begin{tabular}{|p{3cm}|p{9cm}|}
\hline
no sing. in $D_1$& the quer-spheres intersect in a circle (0-pencil)\\\hline
no sing. in $D_2$& the quer-spheres intersect in a circle (0-pencil)\\\hline
\end{tabular}
\\[5pt]\begin{tabular}{|p{3cm}|p{9cm}|}
\hline
$p_1 \in D_1$& $p_1 \in \mathcal{Q}_1$; the quer-spheres lie in  a 1-pencil\\\hline
no sing. in $D_2$& the quer-spheres intersect in a circle (0-pencil)\\\hline
\end{tabular}
\\[5pt]\begin{tabular}{|p{3cm}|p{9cm}|}
\hline
%%%%%%%%%%%%%%%%%%%%%%%%%%%%%%%%%%%%%%%%%%%%%%%%%%%%%%%%%
$p_1, p_2 \in D_1$& $p_1, p_2 \in \mathcal{Q}_1$; the quer-spheres lie in a 2-pencil\\\hline
no sing. in $D_2$& the quer-spheres intersect in a circle (0-pencil)\\\hline\end{tabular}
\end{center}
%%%%%%%%%%%%%%%%%%%%%%%%%%%%%%%%%%%%%%%%%%%%%%%%%%%%%%%%%%%%%%%%%%%%%%%
%
%%%%%%%%%%%%%%%%%%%%%%%%%%%%%%%%%%%%%%%%%%%%%%%%%%%%%%%%%%%%%
%%%%%%%%%%%%%%%%%%%% quer-spheres for evolution map %%%%%%%%%
%%%%%%%%%%%%%%%%%%%%%%%%%%%%%%%%%%%%%%%%%%%%%%%%%%%%%%%%%%%%%
\ \\\\Note that, by Fact \ref{fact_minv_fix}, generically the quer-sphere congruences can be equivalently used to describe the evolution maps of~$\Delta$ based at $t_0$: 
\begin{equation*}
I^{i\star} \ni t \mapsto a^i_t=\lspann{\lspan{\mathfrak{q}^i(t),\mathfrak{p}}\mathfrak{q}^i_0 - \lspan{\mathfrak{q}^i_0,\mathfrak{p}}\mathfrak{q}^i(t)}.
\end{equation*}
\noindent Since the spheres of the two quer-sphere congruences mutually intersect at a right angle, this point of view immediately reveals that the M-Lie inversions $\sigma^i_t$ fix all spheres of the quer-sphere congruence $t\mapsto q^{j}(t)$.
%%%%%%%%%%%%%%%%%%%%%%%%%%%%%%%%%%%%%%%%%%%%%%%%%%%%%5
%
%
%%%%%%%%%%%%%%%%%%%%%%%%%%%%%%%%%%%%%%%%%%%%%%%%%%%%%%%%%%
\\\begin{figure}
\begin{minipage}{2cm}
\includegraphics[scale=0.3]{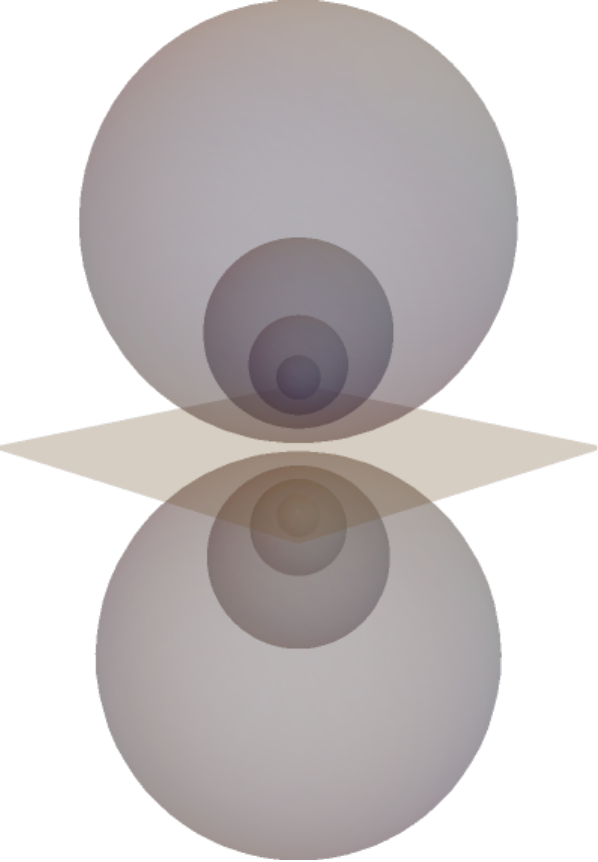}
\end{minipage}
\begin{minipage}{4cm}
\hspace*{0.25cm}\includegraphics[scale=0.4]{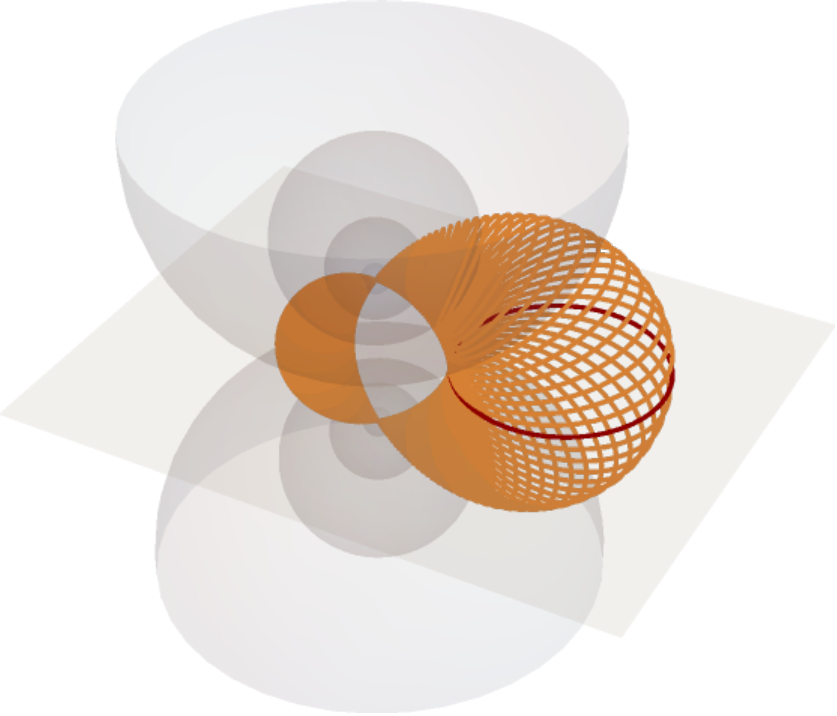}
\end{minipage}
\begin{minipage}{4cm}
\includegraphics[scale=0.4]{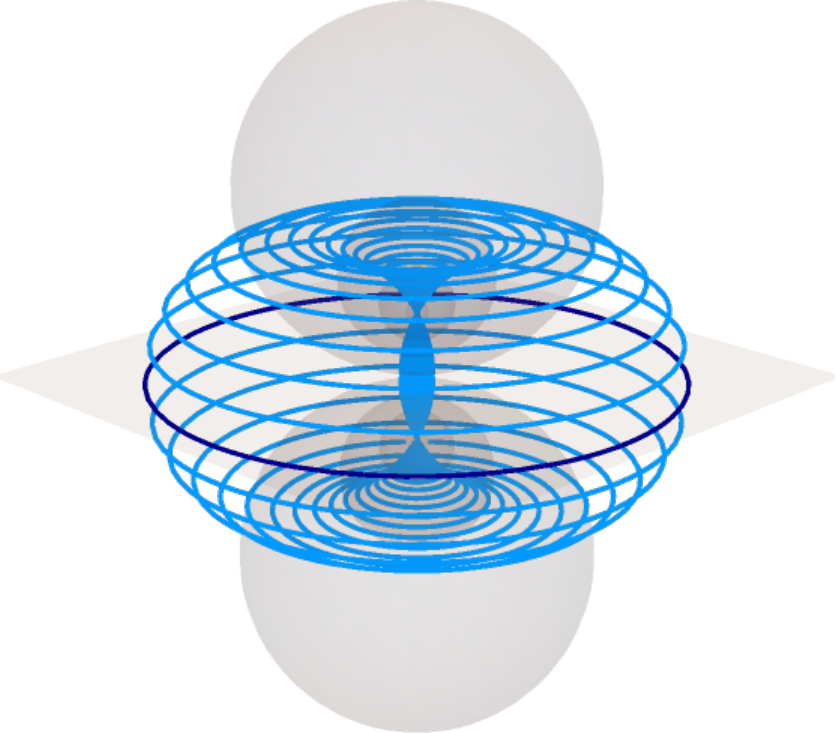}
\end{minipage}
\begin{minipage}{4cm}
\includegraphics[scale=0.35]{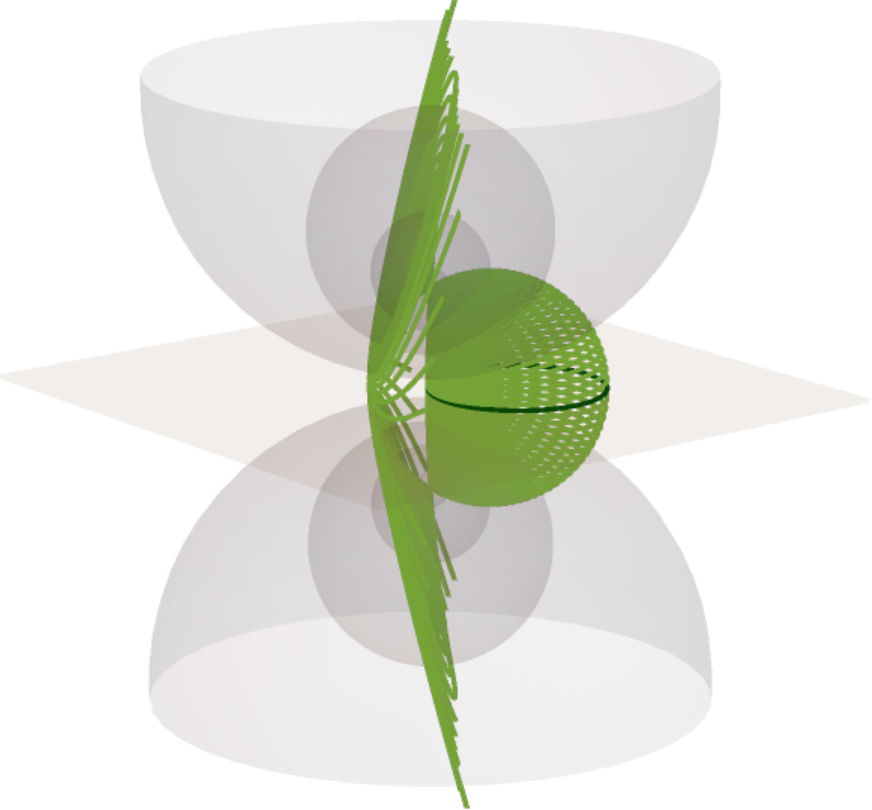}
\end{minipage}
%%%%%%%%%%%%%%%%%%%%%%%%%%%%%%%%%%%%%%%%%%%%%%%%
%
%%%%%%5
\\\vspace*{0.4cm}\begin{minipage}{2cm}
\includegraphics[scale=0.3]{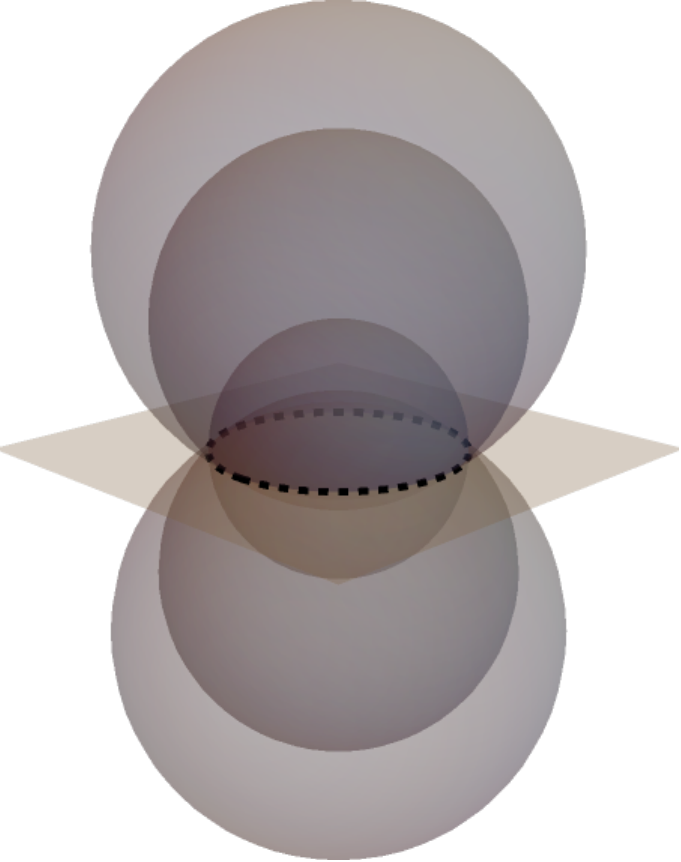}
\end{minipage}
\begin{minipage}{4cm}
\hspace*{0.55cm}\includegraphics[scale=0.4]{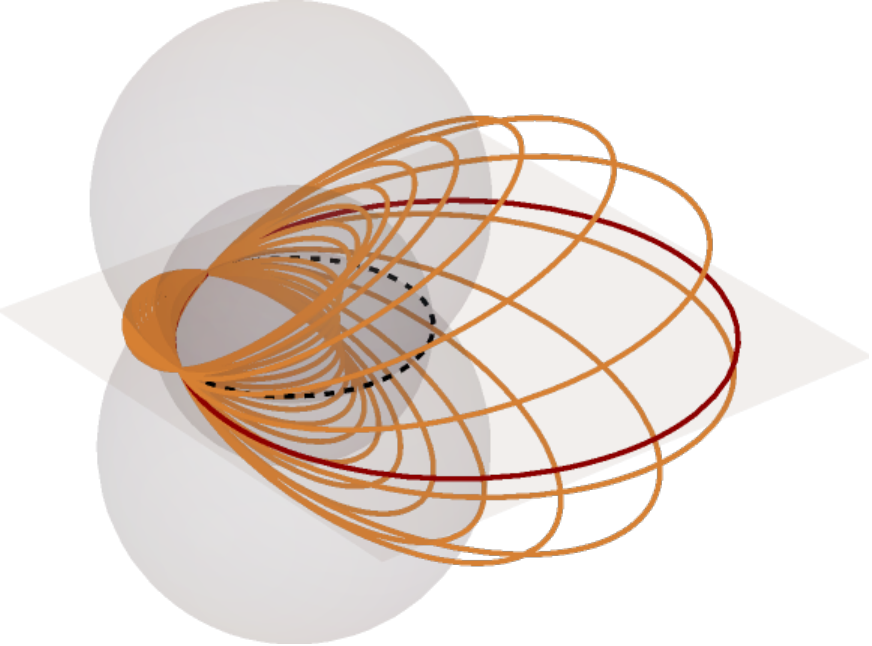}
\end{minipage}
\begin{minipage}{4cm}
\hspace*{0.45cm}\includegraphics[scale=0.45]{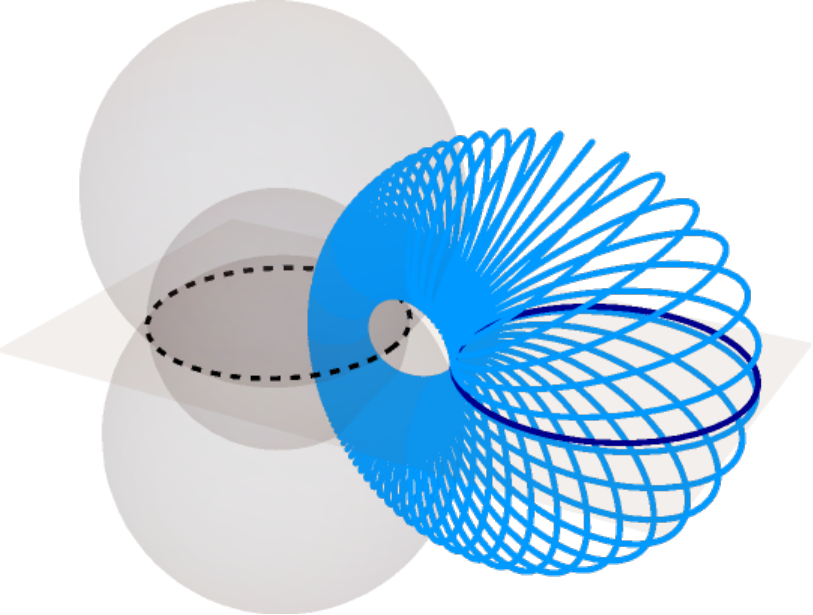}
\end{minipage}
\begin{minipage}{4cm}
\includegraphics[scale=0.45]{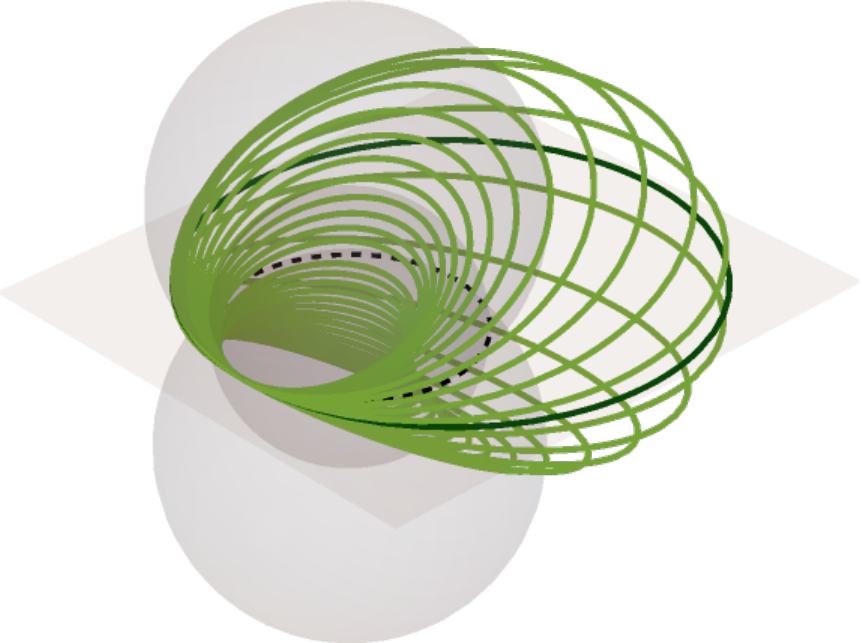}
\end{minipage}
%
%%%%%
\\\vspace*{0.1cm}\begin{minipage}{2cm}
\includegraphics[scale=0.34]{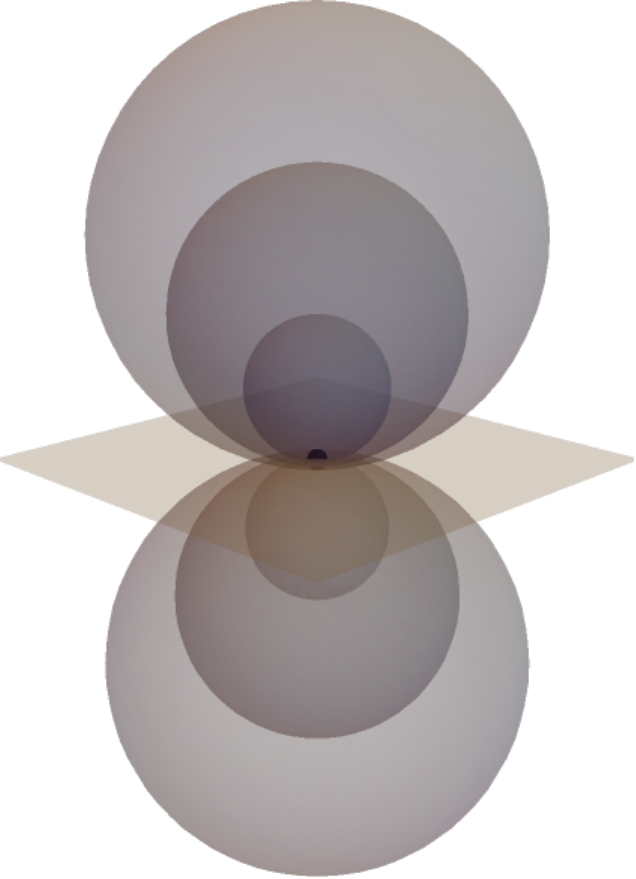}
\end{minipage}
\begin{minipage}{4cm}
\hspace*{0.4cm}\includegraphics[scale=0.45]{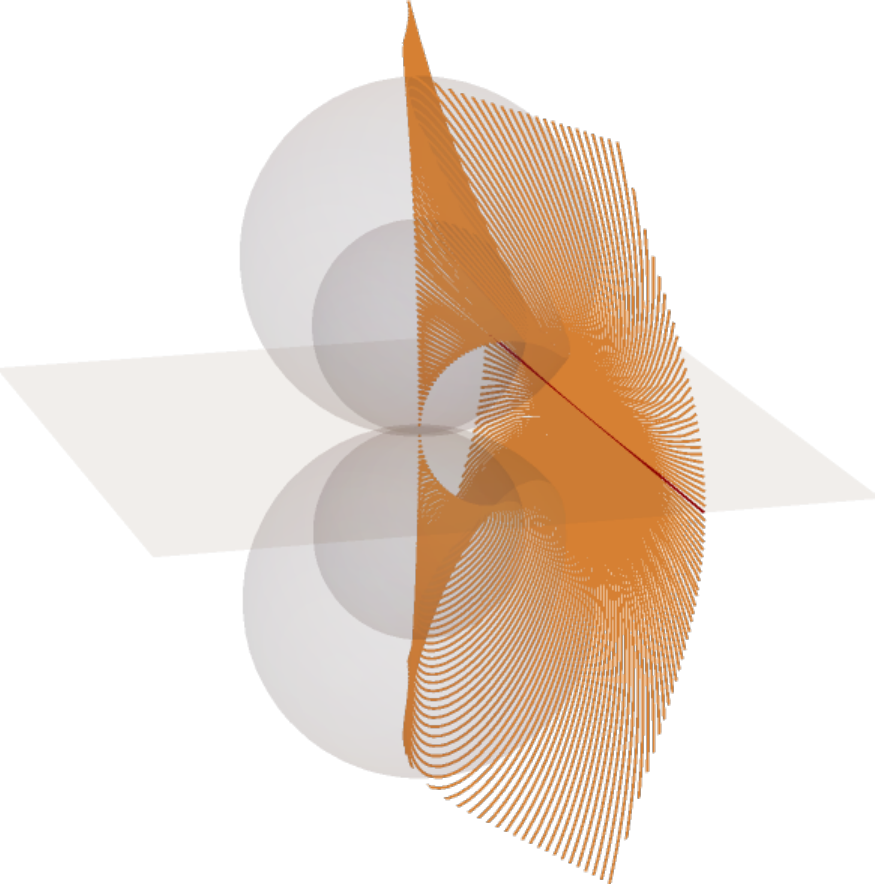}
\end{minipage}
\begin{minipage}{4cm}
\hspace*{0.7cm}\includegraphics[scale=0.45]{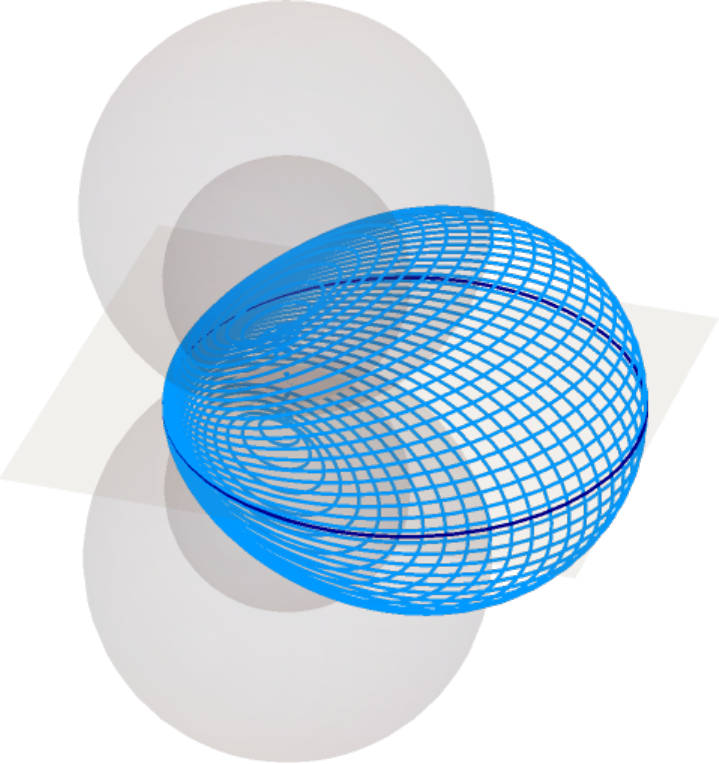}
\end{minipage}
\begin{minipage}{4cm}
\hspace*{0.55cm}\includegraphics[scale=0.4]{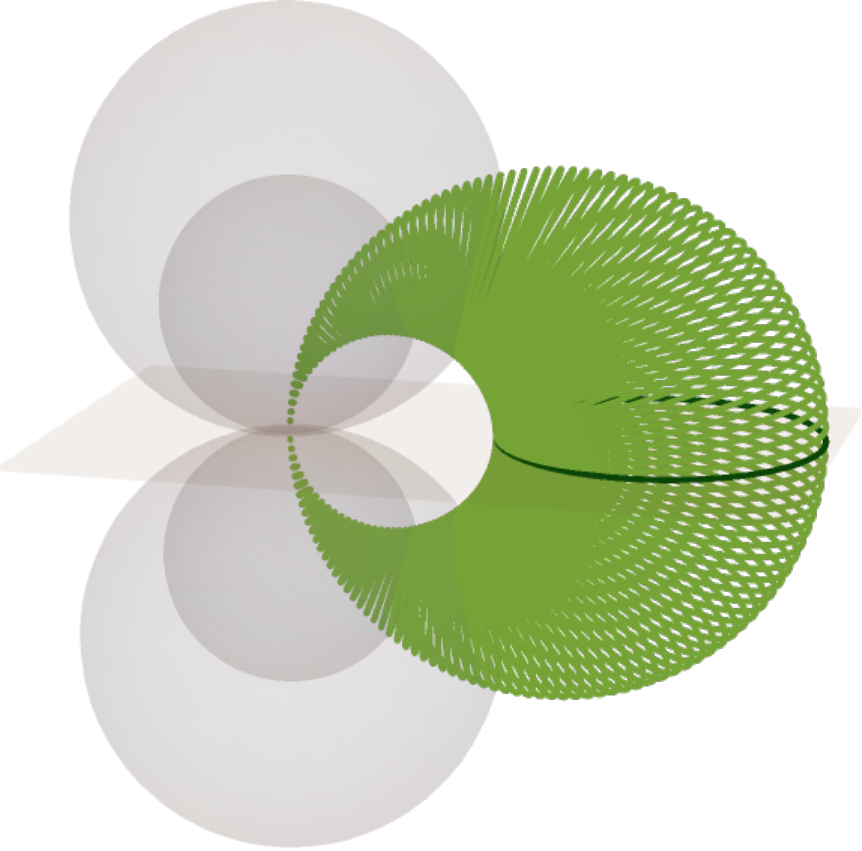}
\end{minipage}
\caption{An M-sphere pencil as prescribed quer-sphere congruence (\emph{left column}) and one initial curvature circle determine (up to orientation) a unique Dupin cyclide. The curvature line parametrization is obtained by evolving the initial circle via the evolution map associated to the M-sphere pencil. The type of quer-sphere pencil and the position of the initial circle determine the number of singularities on the Dupin cyclide.}
\label{fig_dc_evolution_quer}
\end{figure}
%%%%%%%%%%%%%%%%%%%%%%%%%%%%
%%%%%%%%%%%%%%%%%%%%%%%%%%%%%%%%%%%%%%%%%%%%%%%%%%%%%%%%%%%%%%%
%%%%%%%%%%%%%%%%%%%% Dupin cyclides from M-pencil
%%%%%%%%%%%%%%%%%%%%%%%%%%%%%%%%%%%%%%%%%%%%%%%%%%%%%%%%%%%%%%%
\subsection{Dupin cyclides associated to prescribed quer-spheres}\label{subset_quer}
As we have pointed out above, the two quer-sphere congruences of a Dupin cyclide lie in two M-sphere pencils that intersect orthogonally. In this subsection we will discuss that the choice of an M-sphere pencil and one prescribed curvature sphere or curvature circle allows to reverse engineer a Dupin cyclide. 
\\\\Let us start with the simple observation that any M-sphere pencil induces a canonical 1-parameter family of M-Lie inversions: let $J \ni t \mapsto q(t)$ parametrize the oriented spheres in the M-sphere pencil $\mathcal{M}$. For any fixed sphere $q_0:=q(t_0) \in \LL \setminus \PL$ in the M-sphere pencil $\mathcal{M}$ we consider the following linear sphere complexes
\begin{equation*}
J^\star \ni t \mapsto \mathfrak{a}_t := \lspan{\mathfrak{q}(t), \mathfrak{p}} \mathfrak{q}_0 - \lspan{\mathfrak{q}_0, \mathfrak{p}}\mathfrak{q}(t), \ \ \text{where} \ J^\star := \{ t \in J \ |  \ \lspan{\mathfrak{q}_0, \mathfrak{q}(t)} \neq 0 \text{ and } \lspan{\mathfrak{q}(t), \mathfrak{p}} \neq 0 \}.
\end{equation*}
These linear sphere complexes induce a family of M-Lie inversions via $t \mapsto \sigma_{a_t}$; it will be called the \emph{evolution map associated to $\mathcal{M}$ based at $t_0$}.
\begin{prop}\label{prop_evolution_quer1}
Let $t \mapsto q(t)$ parametrize the spheres of an M-sphere pencil $\mathcal{M}$ and denote by $t \mapsto \sigma_t$ the evolution map associated to $\mathcal{M}$ based at $t_0$. For any sphere $s_0 \in \LL$ that is orthogonal to the sphere $q_0:=q(t_0)$ of the M-sphere pencil, the 1-parameter family of spheres 
\begin{equation*}
J^\star \ni t \mapsto s(t):= \sigma_t(s_0)
\end{equation*} 
is either constant or provides a curvature sphere family of a Dupin cyclide.
\end{prop}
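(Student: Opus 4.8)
The plan is to show that, unless $t\mapsto s(t)$ is constant, the linear span $V$ of all $\mathfrak{s}(t)$, $t\in J^\star$, is a $(2,1)$-plane of $\mathbb{R}^{4,2}$. Since every orthogonal $(2,1)$-splitting $V\oplus_\perp V^\perp$ represents a Dupin cyclide $\Delta$, and a non-constant curve in $V\cap\mathcal{L}$ is (a reparametrization of part of) one of the two curvature sphere congruences of $\Delta$, this gives the assertion.

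First I would fix homogeneous coordinates so that $\lspan{\mathfrak{q}(t),\mathfrak{p}}=1$ for $t\in J^\star$ and $\lspan{\mathfrak{q}_0,\mathfrak{p}}=1$; this only rescales $\mathfrak{a}_t$ and hence leaves $\sigma_t=\sigma_{\mathfrak{a}_t}$ unchanged. Then $\mathfrak{a}_t=\mathfrak{q}_0-\mathfrak{q}(t)$ lies in the $2$-dimensional subspace $\mathcal{M}_0:=\mathcal{M}\cap\lspann{\mathfrak{p}}^\perp$, since $\lspan{\mathfrak{a}_t,\mathfrak{p}}=0$. Writing out the Lie inversion gives
\begin{equation*}
\mathfrak{s}(t)-\mathfrak{s}_0=-\frac{2\lspan{\mathfrak{s}_0,\mathfrak{a}_t}}{\lspan{\mathfrak{a}_t,\mathfrak{a}_t}}\,\mathfrak{a}_t\in\mathcal{M}_0 ,
\end{equation*}
so every $\mathfrak{s}(t)$ lies in $W:=\lspann{\mathfrak{s}_0}+\mathcal{M}_0$, a space of dimension at most $3$; thus $V\subseteq W$. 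Being a Lie sphere transformation fixing $\mathfrak{p}$, $\sigma_t$ keeps $\mathfrak{s}(t)$ on $\mathcal{L}$, keeps $\lspan{\mathfrak{s}(t),\mathfrak{p}}=\lspan{\mathfrak{s}_0,\mathfrak{p}}$ constant, and -- because $s_0\perp q_0$ -- yields $s(t)\perp q(t)$ for all $t\in J^\star$.

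Next I would argue that if $s$ is non-constant then $\dim V=3$, so that $V=W$ and $\mathfrak{s}_0\notin\mathcal{M}_0$. Suppose $\dim V\le 2$. If $V$ is not totally isotropic, then $\mathbb{P}(V\cap\mathcal{L})$ is finite, which by continuity of $t\mapsto s(t)$ makes $s$ constant. If $V$ is a totally isotropic $2$-plane, then from $\mathfrak{s}(t)-\mathfrak{s}_0\parallel\mathfrak{a}_t$ every $\mathfrak{a}_t$ with $\lspan{\mathfrak{s}_0,\mathfrak{a}_t}\ne 0$ lies in $V\cap\mathcal{M}_0$; this is a line, as $\mathcal{M}_0\ne V$ (it contains $\mathfrak{q}_0+\mathfrak{p}$, which is not isotropic because $q_0$ is not a point sphere). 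Hence all such $\mathfrak{a}_t$ are proportional to one fixed $\mathfrak{a}\in V$, so $\lspan{\mathfrak{s}_0,\mathfrak{a}}=0$ and therefore $\lspan{\mathfrak{s}_0,\mathfrak{a}_t}=0$ for all $t$ -- i.e.\ $\sigma_t(\mathfrak{s}_0)=\mathfrak{s}_0$ and $s$ is constant after all. So non-constancy of $s$ forces $\dim V=3$.

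It then remains to determine the signature of $W$. For non-degeneracy I would observe that a radical vector of $W$ is isotropic and, being orthogonal to $\mathcal{M}_0$, is fixed by every $\sigma_t$; chasing where this confines the null curve $t\mapsto\mathfrak{s}(t)$ shows it must lie in a space of dimension $\le 2$ (or that $\mathfrak{s}_0$ is itself a radical vector, making $s$ constant), contradicting $\dim V=3$. As $W$ is spanned by isotropic vectors it is indefinite, so its signature is $(2,1)$ or $(1,2)$, and a short case distinction by the type of the M-sphere pencil $\mathcal{M}$ excludes $(1,2)$: if $\mathcal{M}$ is a $0$- or $1$-pencil, $\mathcal{M}_0\subseteq W$ is a positive (semi)definite $2$-plane and no such plane embeds isometrically into a $(1,2)$-space; if $\mathcal{M}$ is a $2$-pencil, $\mathcal{M}_0$ has signature $(1,1)$, and here the hypothesis $s_0\perp q_0$ enters: the $\mathcal{M}_0$-component $\mathfrak{s}_0'$ of $\mathfrak{s}_0$ is orthogonal to the space-like vector $\mathfrak{q}_0+\mathfrak{p}\in\mathcal{M}_0$, hence lies in the time-like line $\mathcal{M}_0\cap(\mathfrak{q}_0+\mathfrak{p})^\perp$, so $\lspan{\mathfrak{s}_0',\mathfrak{s}_0'}\le 0$; since $\mathfrak{s}_0$ is isotropic its $\mathcal{M}_0^\perp$-component $\mathfrak{s}_0''$ satisfies $\lspan{\mathfrak{s}_0'',\mathfrak{s}_0''}\ge 0$, hence $>0$ by non-degeneracy of $W$, and $W=\mathcal{M}_0\oplus_\perp\lspann{\mathfrak{s}_0''}$ has signature $(2,1)$. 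Then $W^\perp$ is also a $(2,1)$-plane, $\Delta:=W\oplus_\perp W^\perp$ is a Dupin cyclide, and $t\mapsto s(t)$ is a non-constant curve in $W\cap\mathcal{L}$, i.e.\ a curvature sphere family of $\Delta$ (a circle, in the degenerate case where $s_0$ is a point sphere). The hard part will be this signature bookkeeping -- in particular isolating the role of $s_0\perp q_0$ in the $2$-pencil case and checking the tangential-pencil and point-sphere configurations; everything else is linear algebra with the explicit formula for $\sigma_t$.
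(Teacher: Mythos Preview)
Your argument is essentially correct, and the sketchy parts (non-degeneracy of $W$, the totally-isotropic sub-case) can be filled in along the lines you indicate. For instance, if $W$ had a nontrivial radical $\lspann{w}$, then the induced form on $W/\lspann{w}$ is non-degenerate of rank~$2$; its null set is either $\{0\}$ or two lines, so the null set of $W$ is either $\lspann{w}$ or two $2$-planes through $w$, and continuity forces $\dim V\le 2$ --- exactly the ``chasing'' you allude to.

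Your route, however, is genuinely different from the paper's. The paper argues \emph{geometrically}: it takes the contact elements along the intersection circle of $s_0$ and $q_0$, singles out in each the sphere $\tilde r$ orthogonal to a second pencil sphere $q_1$ (hence to all of $\mathcal{M}$), observes that every $\sigma_t$ fixes each $\tilde r$, and concludes that the $s(t)$ stay in oriented contact with the fixed $1$-parameter family $\tilde r$ --- which is precisely the Dupin cyclide condition. No signature computation and no case split on the pencil type is needed, and as a bonus one obtains the second curvature sphere family explicitly (this feeds directly into Corollary~\ref{cor_evolution_quer2}). Your linear-algebraic approach, by contrast, is self-contained (it uses none of the facts about orthogonal circles) and isolates more sharply where the hypothesis $s_0\perp q_0$ enters: only in the $2$-pencil case, to force the $\mathcal{M}_0$-component of $\mathfrak{s}_0$ into the timelike line $(\mathfrak{q}_0+\mathfrak{p})^\perp\cap\mathcal{M}_0$. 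For $0$- and $1$-pencils the positive (semi)definiteness of $\mathcal{M}_0$ already suffices. The trade-off is the signature bookkeeping you anticipate; it is routine but does need to be written out carefully, especially the degenerate $1$-pencil case where the radical of $\mathcal{M}_0$ (the tangency point $n$) may or may not lie on $s_0$.
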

\begin{proof}
Let $s_0 \in \LL$ be a sphere orthogonal to $q_{0}$. If $s_0$ is also orthogonal to all other spheres in~$\mathcal{M}$, then due to Fact \ref{fact_minv_fix}, $s_0$ is fixed by all M-Lie inversions in the evolution map and the family $t \mapsto s(t)$ is constant.
%%%%%%%%%%%%%% second case %%%%%%%%%%%%%%%%%%%%%%%%%%%5
\\\\Otherwise, assume that $s_0$ is not orthogonal to all spheres in~$\mathcal{M}$. Since the elements $t \mapsto a_t$ lie in a 2-dimensional subspace of $\mathbb{R}^{4,2}$, we conclude that the spheres $t \mapsto s(t)$ lie in a 3-dimensional subspace of $\mathbb{R}^{4,2}$. Hence, we will show that the spheres~$t \mapsto s(t)$ are in oriented contact with another 1-parameter family of spheres, denoted by $t \mapsto r(t)$, and are therefore indeed curvature spheres of a Dupin cyclide. 

To do so, consider the 1-parameter family $t \mapsto f(t)$ of contact elements that contain the points of the intersection circle of $s_0$ with $q_0$ and that contain the sphere $s_0$. By construction, all spheres in the contact elements are then orthogonal to $q_0$ (cf.\,Fact \ref{fact_circle_two_all}). 

Furthermore, let $q_1$ be another sphere in~$\mathcal{M}$ such that $q_1 \notin \lspann{\mathfrak{q}_0, \mathfrak{p}}$. Then, in each contact element $f(t)$, there exists one sphere $\tilde{r}(t)$ that is also orthogonal to $q_1$: 
\begin{equation*}
\tilde{r}(t):= f(t) \cap \lspann{\mathfrak{q}_1+ \lspan{ \mathfrak{q}_1, \mathfrak{p}}\mathfrak{p}}^\perp.
\end{equation*}
Hence, in particular, the spheres $t \mapsto \tilde{r}(t)$ are orthogonal to all spheres in $\mathcal{M}$. Therefore, due to Fact~\ref{fact_minv_fix}, the spheres $t \mapsto r(t)$ are fixed by all M-Lie inversions of the evolution map associated to~$\mathcal{M}$. Since Lie inversions preserve oriented contact between spheres the claim is proven. 
\end{proof}
\noindent Note that this construction also includes parametrized circles as special Dupin cyclides: if $s_0 \in \PL$ is a point sphere that lies on the initial sphere $q_0$, then $t \mapsto \sigma_t(s_0)$ yields a parametrized circle. 

In particular, if we evolve an initial curvature circle, we obtain curvature line parametrizations of the constructed Dupin cyclides (see Figure~\ref{fig_dc_evolution_quer}):
\begin{cor}\label{cor_evolution_quer2}
Let $u \mapsto \gamma(u) \in \gamma \cap \PL$ be a parametrization of a circle $\Gamma=(\gamma, \gamma^\perp)$ on the initial sphere $q_0$. Then 
\begin{equation*}
(u,t) \mapsto \sigma_t(\gamma(u)) \in \PL
\end{equation*}
provides either a 2-dimensional orthogonal coordinate system of circles on a sphere or a curvature line parametrization of a Dupin cyclide.
\end{cor}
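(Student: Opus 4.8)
The strategy is to reduce the statement to Proposition~\ref{prop_evolution_quer1} together with the observation already recorded immediately after its proof, namely that the evolution map applied to a point sphere on $q_0$ produces a parametrized circle. Fix a point $u$ in the parameter domain of $\Gamma$ and write $\gamma(u) \in \gamma \cap \PL$ for the corresponding point sphere. Since $\Gamma=(\gamma,\gamma^\perp)$ lies on $q_0$, the point sphere $\gamma(u)$ lies on $q_0$ and hence is orthogonal to $q_0$ in the M\"obius-geometric sense. Proposition~\ref{prop_evolution_quer1} therefore applies to the initial sphere $s_0 := \gamma(u)$ and shows that $t \mapsto \sigma_t(\gamma(u))$ is either constant or a curvature sphere family of a Dupin cyclide; but since each $\sigma_t$ is an M-Lie inversion it preserves point spheres, so $\sigma_t(\gamma(u)) \in \PL$ for all $t$, and a curvature sphere family consisting entirely of point spheres is exactly a parametrized circle (cf.\,(\ref{equ_circle})). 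Thus each $u$-curve $t \mapsto \sigma_t(\gamma(u))$ is a (possibly degenerate) circle.

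Next I would record that each $t$-curve $u \mapsto \sigma_t(\gamma(u))$ is a circle as well: it is the image of the fixed circle $\Gamma$ under the single M\"obius transformation $\sigma_t$, and M\"obius transformations map circles to circles. So the map $(u,t) \mapsto \sigma_t(\gamma(u))$ has circular coordinate curves in both directions. It remains to identify the resulting two-parameter family of point spheres as the point sphere map of a Dupin cyclide (or, in the degenerate case, of a sphere carrying an orthogonal net of circles). For this I would invoke the Legendre lift: the point sphere map $f(u,t)=\sigma_t(\gamma(u))$ together with the curvature spheres produced by the two evolutions in Proposition~\ref{prop_evolution_quer1} assembles into a Legendre map whose underlying surface is the Dupin cyclide $\Delta=D_1\oplus_\perp D_2$ built in that proof. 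Concretely, $\gamma(u)$ is a point sphere on the curvature sphere $q_0=q(t_0)$, the family $t\mapsto r(t)$ from the proof of Proposition~\ref{prop_evolution_quer1} supplies one curvature sphere congruence (fixed by every $\sigma_t$), and $t\mapsto s(t)=\sigma_t(q_0)$ supplies the other; evolving the initial curvature circle $\Gamma\subset q_0$ by the same $\sigma_t$ then sweeps out exactly the point sphere map of $\Delta$, and because the $\sigma_t$ are M\"obius transformations the $u$- and $t$-curves are the two families of curvature lines, which are circular by the previous paragraph. Hence $(u,t)\mapsto\sigma_t(\gamma(u))$ is a curvature line parametrization of the Dupin cyclide $\Delta$.

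Finally I would address the degenerate alternative. This is precisely the case treated at the start of the proof of Proposition~\ref{prop_evolution_quer1}, where the initial sphere is orthogonal to \emph{all} spheres of the M-sphere pencil $\mathcal{M}$ and the evolution is constant; translated to the present setting, this happens when the circle $\Gamma$ is fixed pointwise is not what occurs, rather when the whole configuration collapses so that the curvature sphere family $t\mapsto s(t)$ is constant, equal to $q_0$. Then all point spheres $\sigma_t(\gamma(u))$ lie on the single sphere $q_0$, and the two families of circles $u\mapsto\sigma_t(\gamma(u))$ and $t\mapsto\sigma_t(\gamma(u))$ form a two-dimensional orthogonal coordinate system of circles on $q_0$ — orthogonality being inherited from the fact that the evolution map $\sigma_t$ fixes the quer-sphere congruence of the other direction, so the two coordinate directions remain orthogonal after evolution. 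I expect the main obstacle to be the bookkeeping in the non-degenerate case: one must check that the point sphere map $f(u,t)=\sigma_t(\gamma(u))$ is genuinely the curvature line parametrization of the \emph{same} Dupin cyclide whose curvature sphere congruences $t\mapsto r(t)$ and $t\mapsto s(t)$ were produced in Proposition~\ref{prop_evolution_quer1}, i.e.\ that $\gamma(u)$ lies in the contact element spanned by $s(t)$ and $r(t)$ for the appropriate $t$; this is a matter of tracking the intersection circle of $s_0$ with $q_0$ through the evolution, but it is the step where the geometric picture of Figure~\ref{fig_dc_evolution_quer} has to be made precise.
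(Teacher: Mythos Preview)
Your overall plan is on the right track and close to how the paper (which gives no explicit proof) expects the reader to see the corollary: combine Proposition~\ref{prop_evolution_quer1} with the parametrization machinery of Subsection~\ref{subset_param}. But your execution contains a concrete conflation that breaks the argument. You repeatedly treat $q_0$ as a curvature sphere---writing ``$\gamma(u)$ is a point sphere on the curvature sphere $q_0=q(t_0)$'' and ``$t\mapsto s(t)=\sigma_t(q_0)$ supplies the other [curvature sphere congruence]''. This is false: $q_0$ is the initial \emph{quer}-sphere, $\sigma_t(q_0)=q(t)$ just reparametrizes the quer-sphere pencil~$\mathcal{M}$, and in Proposition~\ref{prop_evolution_quer1} the sphere being evolved is an $s_0$ \emph{orthogonal} to $q_0$, not $q_0$ itself. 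Since $q_0$ is not orthogonal to itself, Proposition~\ref{prop_evolution_quer1} does not apply to it, and the Dupin cyclide whose curvature sphere families you claim to have identified has not actually been produced.

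The missing step is this: choose $s_0\in\gamma^\perp$ to be the (up to orientation unique) sphere through~$\Gamma$ that is orthogonal to $q_0$, and apply Proposition~\ref{prop_evolution_quer1} to this~$s_0$. That yields a genuine Dupin cyclide~$\Delta$ whose curvature circle on $s_0$ (equivalently on $q_0$) is precisely~$\Gamma$, and whose quer-sphere congruence in this direction is~$\mathcal{M}$. Now the last paragraph of Subsection~\ref{subset_param} tells you that the evolution map of~$\Delta$ can equivalently be described via the quer-spheres, so it coincides with the evolution map associated to~$\mathcal{M}$; the paragraph before that then gives directly that $(u,t)\mapsto\sigma_t(\gamma(u))$ is a curvature line parametrization of~$\Delta$. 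This also clarifies the degenerate case, which you describe in a garbled way: it is not that ``$s(t)$ is constant, equal to $q_0$'', but that the chosen $s_0$ happens to be orthogonal to \emph{all} spheres of~$\mathcal{M}$, so $\sigma_t(s_0)=s_0$ for every~$t$ and the entire image lies on the fixed sphere~$s_0$ (not on~$q_0$). Orthogonality of the two circle families on $s_0$ then follows because each $t$-curve is orthogonal to every quer-sphere $q(t)$ while each $u$-curve $\sigma_t(\Gamma)$ lies on~$q(t)$.
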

\noindent By construction, the type of the M-sphere pencil determines the number of singular curvature spheres in the obtained Dupin cyclide: if the constructed object is not a constant sphere or a circle, the curvature sphere family obtained by an evolution map associated to an $n$-pencil, $n=\{0,1,2\}$, yields $n$ singular curvature spheres in this family.

Furthermore, it shows that there are three types of Lam\'e families of 2-dimensional orthogonal coordinate systems composed of circles (see also \cite{LEITE20151254}).
%%%%%%%%%%%%%%%%%%%%%%%%%%%%%%%%%%%%%%%%%%%%%%%%%%%%%%%
%
\subsection{2-ortho-Dupin cyclides} \label{subsect_2_ortho}
In \cite{ortho_circles} circles that intersect a Dupin cyclide in at least two points orthogonally were investigated. It was shown that suitably chosen circles of this type form further Dupin cyclides so that special pairs of orthogonally intersecting Dupin cyclides emerge. The arguments in \cite{ortho_circles} mainly rely on the fact that a Dupin cyclide is M\"obius invariant to a circular cone, a circular cylinder or a torus of revolution, where the situation of orthogonally intersecting circles is rather lucid.

In this subsection we briefly discuss that these pairs of Dupin cyclides that intersect along two common curvature lines orthogonally, can be obtained by a simultaneous evolution. To make this text self-contained, some results of \cite{ortho_circles} will be reproven in the Lie sphere geometric setup. We will then answer some open questions in this realm (cf.\,\cite[\S 7]{ortho_circles}) and discuss in Subsection \ref{subsect_tos_2_ortho} how these pairs of Dupin cyclides arise in special triply orthogonal coordinate systems.

We again fix a point sphere complex $\mathfrak{p}$ and work in the M\"obius subgeometry modelled on $\lspann{\mathfrak{p}}^\perp$.
\begin{defi}
A \emph{2-ortho-circle} of a Dupin cyclide is a circle that intersects the Dupin cyclide in exactly two different points orthogonally. 
\end{defi}
\noindent To avoid case studies, in what follows we only consider 2-ortho-circles that intersect the Dupin cyclide in regular curvature spheres, that is, the points of intersection of the 2-ortho-circle and the Dupin cyclide are not curvature spheres. However, most of the results also hold for these special cases. 

\begin{lem}\label{lem_2-ortho_points}
The two intersection points of a 2-ortho-circle with the Dupin cyclide lie on a common curvature circle. 
\end{lem}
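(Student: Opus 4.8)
The plan is to use the Lie sphere geometric characterization of a Dupin cyclide as an orthogonal $(2,1)$-splitting $\Delta = D_1 \oplus_\perp D_2$ together with the description of orthogonality and of circles orthogonal to spheres developed in Section~\ref{sect_prelim}. Let $m_1, m_2 \in \PL$ be the two intersection point spheres of the $2$-ortho-circle $\Gamma = (\gamma, \gamma^\perp)$ with the Dupin cyclide, and let $f_1, f_2$ be the contact elements of $\Delta$ at these points. By assumption the curvature spheres of $\Delta$ through $m_i$ are regular, so each $f_i$ contains two distinct curvature spheres $s^1(u_i) \in D_1 \cap \LL$ and $s^2(v_i) \in D_2 \cap \LL$, with $f_i = \lspann{\mathfrak{s}^1(u_i), \mathfrak{s}^2(v_i)}$. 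The goal is to show that one of the equalities $u_1 = u_2$ or $v_1 = v_2$ holds, i.e. $m_1$ and $m_2$ share a curvature sphere, hence lie on a common curvature circle.

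First I would invoke Fact~\ref{fact_circle_two_all}: since $\Gamma$ intersects both curvature spheres in each $f_i$ orthogonally, it intersects \emph{all} spheres of $f_1$ and of $f_2$ orthogonally; in particular the spheres of $f_1$ and $f_2$ all lie in the $4$-dimensional subspace $\mathcal{S}$ of spheres orthogonal to $\Gamma$ (Lemma~\ref{lem_orth_contact_el}), which also contains $\gamma$ and $\mathfrak{p}$. Next I would translate the orthogonality of $\Gamma$ with a curvature sphere into a statement inside $D_1$ and $D_2$ separately: using $\lspan{\mathfrak{s}_1, \mathfrak{s}_2 + \lspan{\mathfrak{s}_2, \mathfrak{p}}\mathfrak{p}} = 0$ as the orthogonality condition, the point spheres $m_1, m_2$ and the fact that each lies on its curvature spheres, one sees that the "Möbius-projected" curvature spheres $\mathfrak{s}^1(u_i) + \lspan{\mathfrak{s}^1(u_i), \mathfrak{p}}\mathfrak{p}$ lie in $\gamma^\perp$ — or equivalently that $\gamma$ is orthogonal to these spheres, so they all sit in the $(3,1)$-space $\mathcal{S} = \gamma \oplus (\text{the spheres through }\Gamma)$. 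Intersecting $\mathcal{S}$ with $D_1$: because $D_1$ is a $(2,1)$-plane and $\mathcal{S}$ is $4$-dimensional in a $6$-dimensional space of signature $(4,2)$, the intersection $\mathcal{S} \cap D_1$ is at least $2$-dimensional; I would argue the two curvature spheres $s^1(u_1), s^1(u_2)$ together with (the relevant combination of) $\mathfrak{p}$ must lie in it, and a dimension/signature count on $D_1 \cap \mathcal{S}$ forces $s^1(u_1)$ and $s^1(u_2)$ to be dependent modulo $\mathfrak{p}$ unless the analogous thing happens in $D_2$.

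The cleanest route is probably via the quer-spheres: the $2$-ortho-circle, being orthogonal to the Dupin cyclide at $m_1$ and $m_2$, is orthogonal to the contact elements $f_1, f_2$ and hence (by the description in Subsection~\ref{subset_quer} and Fact~\ref{fact_orth_spheres}) it is orthogonal to the two quer-spheres $q^1, q^2$ through each point; since the quer-sphere congruences lie in M-sphere pencils $\mathcal{Q}_1, \mathcal{Q}_2$ and $\Gamma$ orthogonal to two spheres of such a pencil is orthogonal to all of it, $\Gamma$ is orthogonal to \emph{both entire} M-sphere pencils, forcing $\dim(\mathcal{Q}_1 \cup \mathcal{Q}_2) \le 4$; combined with $\mathcal{Q}_1 \perp \mathcal{Q}_2$ this pins down that $q^i(m_1)$ and $q^i(m_2)$ coincide for one index $i$, and since the quer-sphere determines the curvature leaf, $m_1$ and $m_2$ lie on the same curvature circle. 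The main obstacle I anticipate is handling the two M-sphere pencils simultaneously inside the $(4,2)$-space and ruling out the degenerate configuration where $\Gamma$ would be forced to be orthogonal to the whole cyclide — a signature bookkeeping argument, which should be manageable since $\mathcal{Q}_1$ and $\mathcal{Q}_2$ each contain $\mathfrak{p}$ and span orthogonal $(2,1)$-type data; one should also be slightly careful that $m_1 \ne m_2$ genuinely excludes the trivial case where all four curvature-sphere parameters coincide.
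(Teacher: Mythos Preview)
Your proposal has real gaps in both routes you sketch. The quer-sphere approach assumes that $\Gamma$ is orthogonal to the quer-spheres $q^i$ through $m_1$ and $m_2$, but this does not follow from orthogonality of $\Gamma$ to the contact elements $f_1, f_2$: the quer-sphere $q^i$ is \emph{not} a member of the contact element $f_i$ (it is orthogonal to a curvature sphere, not in oriented contact with it), so Fact~\ref{fact_circle_two_all} does not apply to it. What actually holds is that $\Gamma$ \emph{lies on} a quer-sphere (Lemma~\ref{lem_ortho_on_quer}), and that lemma is proved \emph{using} the present one, so invoking quer-sphere pencils here would be circular. The dimension-count route is also incomplete: you correctly place $\mathfrak{s}^1(u_1), \mathfrak{s}^1(u_2), \mathfrak{s}^2(v_1), \mathfrak{s}^2(v_2), \mathfrak{p}$ in the $4$-dimensional space $\mathcal{S}$ of Lemma~\ref{lem_orth_contact_el}, but the resulting linear dependence only forces the $D_i$-components of $\mathfrak{p}$ into the $2$-planes $\lspann{\mathfrak{s}^i(u_1), \mathfrak{s}^i(u_2)}$; the signature of $\mathcal{S}$ is $(2,2)$ either way, and no contradiction emerges without an additional idea.

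The paper's argument is far shorter and bypasses all of this. Write $f_{m_1} = \lspann{s_{11}, s_{12}}$ and $f_{m_2} = \lspann{s_{21}, s_{22}}$ with $s_{i1} \in D_1$, $s_{i2} \in D_2$, and suppose $f_{m_1} \cap f_{m_2} = \emptyset$, i.e.\ $s_{11} \neq s_{21}$ and $s_{12} \neq s_{22}$. Then the ``cross'' pairs $f'_{m_1} = \lspann{s_{11}, s_{22}}$ and $f'_{m_2} = \lspann{s_{21}, s_{12}}$ are again contact elements of the same Dupin cyclide. Since $\Gamma$ is orthogonal to each of $s_{11}, s_{12}, s_{21}, s_{22}$ individually, Fact~\ref{fact_circle_two_all} makes $\Gamma$ orthogonal to all of $f'_{m_1}$ and $f'_{m_2}$, hence orthogonal to their point spheres --- which therefore lie on $\Gamma$. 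That gives four distinct orthogonal intersection points of $\Gamma$ with $\Delta$, contradicting the definition of a $2$-ortho-circle. Hence $f_{m_1} \cap f_{m_2} \neq \emptyset$, the shared sphere is a curvature sphere, and $m_1, m_2$ lie on the corresponding curvature circle. The key step you are missing is precisely this cross-contact-element trick, which exploits the product structure $D_1 \oplus_\perp D_2$ directly rather than going through auxiliary sphere pencils.
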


\begin{proof}
Let $\Gamma=(\gamma, \gamma^\perp)$ be a circle that intersects the Dupin cyclide $\Delta=D_1 \oplus_{\perp} D_2$ orthogonally  in the points $m_1$ and $m_2$. We denote the two corresponding contact elements of $\Delta$ through $m_1$ and $m_2$  by $f_{m_i} = \lspann{s_{i1}, s_{i2}}$, $i \in \{ 1, 2\}$ , where $s_{i1} \in D_1$ and $s_{i2} \in D_2$.

There are two possible cases: firstly, suppose that $f_{m_1} \cap f_{m_2} = \emptyset$, then the two further contact elements $f'_{m_1}=\lspann{s_{11}, s_{22}}$ and $f'_{m_2}=\lspann{s_{21}, s_{12}}$ are also part of the Dupin cyclide. Due to Fact \ref{fact_circle_two_all}, the circle $\Gamma$ then  intersects the Dupin cylide in four point spheres orthogonally and is therefore not a 2-ortho circle.

Otherwise, if $f_{m_1} \cap f_{m_2} \neq \emptyset$, the common sphere is a curvature sphere of the Dupin cyclide~$\Delta$ and the two points $m_1$ and $m_2$ therefore lie on a common curvature line, which proves the claim. 
\end{proof}
%%%%%%%%%%%%%%%%%%%%%%%%%%%%%%%%%%%%%%%%%%%%%
%
%%%%%%%%%%%%%%%%%%%%%%%%%%%%%%%%%%%%%%%%%%%%%
%
%%%%%%%%%%%%%%%%%%%%%%%%%%%%%%%%%%%%%%%%%%%%%%%%%%%%%
\begin{figure}
\includegraphics[scale=0.4]{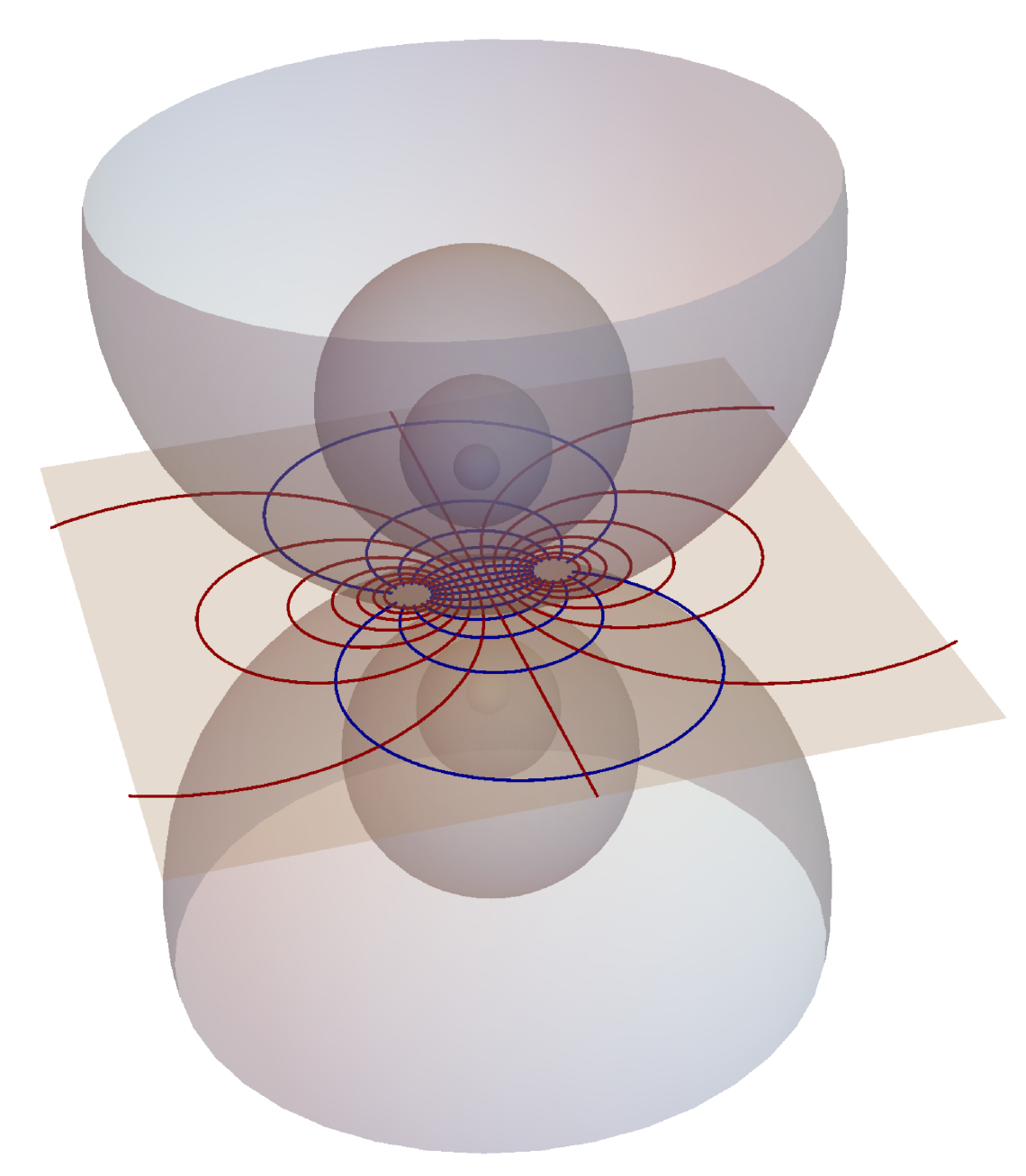}
\includegraphics[scale=0.4]{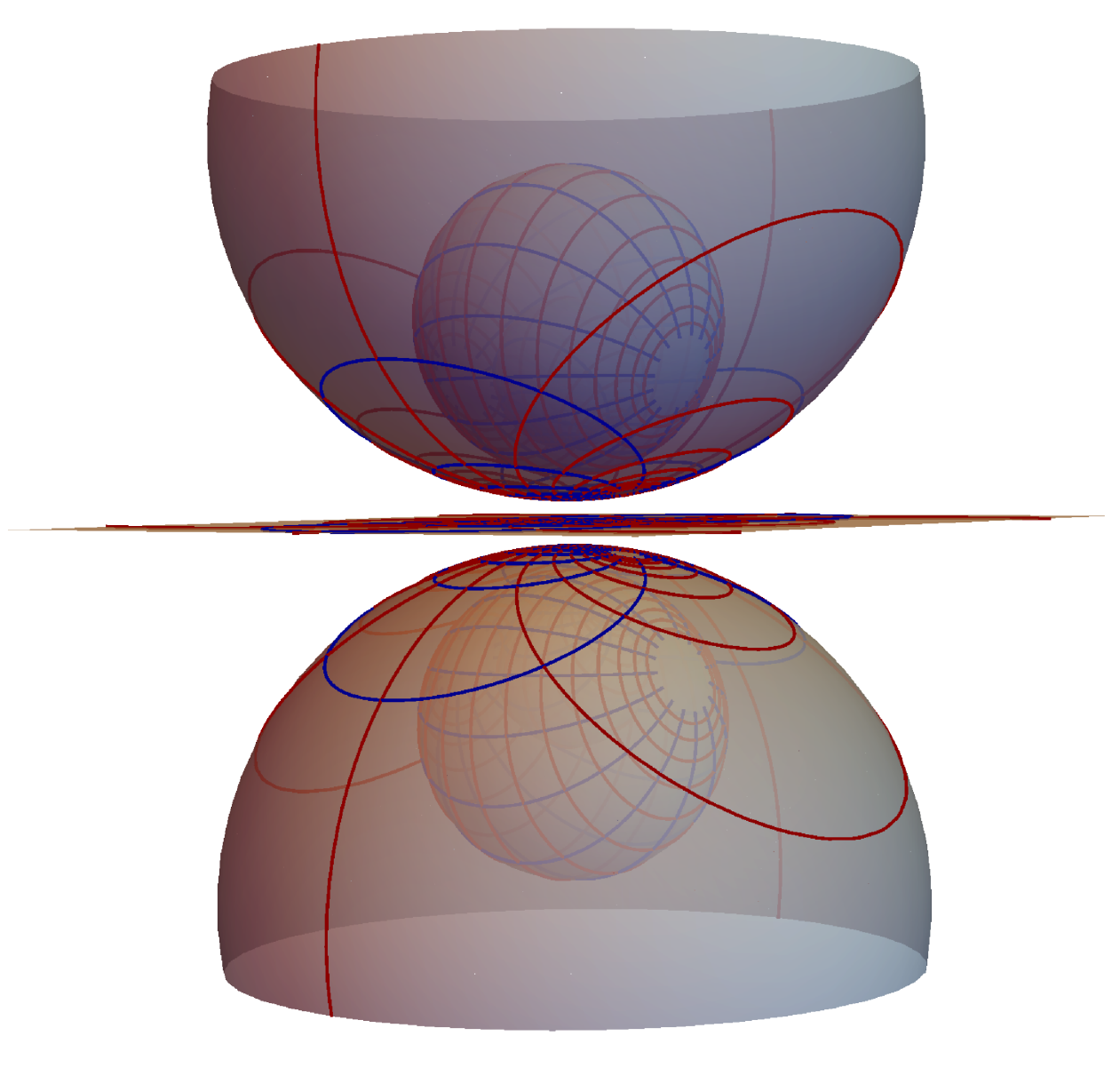}
\caption{Any M-sphere pencil and any 2-dimensional orthogonal coordinate system compound of circles (\emph{left}) on one of the spheres give via associated evolution rise to a triply orthogonal system (\emph{right}). One coordinate surface family consists of the spheres in the M-sphere pencil, the other two generically of Dupin cyclides.}\label{fig_tos}
\end{figure}
%%%% construction of 2-ortho circle  %%%%%%%% 
%%%%%%%%%%%%%%%%%%%%%%%%%%%%%%%%%%%%%%%%%%%%%
%
%%%%%%%%%%%%%%%%%%%%%%%%%%%%%%%%%%%%%%%%%%%%%
\noindent Thus, any 2-ortho-circle of a Dupin cyclide can be constructed in the following way:
\begin{itemize}
\item choose two contact elements $f_1$ and $f_2$ of a Dupin cyclide such that its point spheres $m_1$ and $m_2$ lie on a common curvature line, that is, the two contact elements share a common curvature sphere $s:= f_1 \cap f_2$;
\item then, according to (\ref{equ_orth_circle}), the 2-ortho-circle $\Gamma=(\gamma, \gamma^\perp)$ that goes through $m_1$ and $m_2$ is provided by the $(2,1)$-plane
\begin{equation}\label{equ_ortho_circle}
\gamma:=\lspann{\mathfrak{m}_1, \mathfrak{m}_2, \mathfrak{s}+ \lspan{\mathfrak{s}, \mathfrak{p}}\mathfrak{p} } \in G_{(2,1)}^\mathcal{P}.
\end{equation}
\end{itemize}
\ \\This description shows a simple property of 2-ortho-circles that will be crucial in our further studies:
\begin{lem}\label{lem_ortho_on_quer}
Any 2-ortho-circle of a Dupin cyclide lies on a quer-sphere of the Dupin cyclide.
\end{lem}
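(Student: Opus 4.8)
The plan is to use the explicit description of a 2-ortho-circle from~(\ref{equ_ortho_circle}) together with the formula~(\ref{equ_orth_circle}) for the sphere that a circle meets orthogonally, and simply identify the relevant quer-sphere. Recall that a 2-ortho-circle $\Gamma=(\gamma,\gamma^\perp)$ of $\Delta=D_1\oplus_\perp D_2$ passes through two point spheres $m_1,m_2$ that, by Lemma~\ref{lem_2-ortho_points}, lie on a common curvature circle; hence the two contact elements of $\Delta$ at $m_1,m_2$ share a common curvature sphere $s:=f_1\cap f_2\in D_i$ (say $D_1$), and $\gamma=\lspann{\mathfrak{m}_1,\mathfrak{m}_2,\mathfrak{s}+\lspan{\mathfrak{s},\mathfrak{p}}\mathfrak{p}}$.

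First I would recall that a sphere $s$ of $\Delta$ lies in one of the $(2,1)$-planes $D_1$ or $D_2$; here the common curvature sphere is $s\in D_1$, and the curvature circle on $s$ is precisely the set of point spheres through which $m_1$ and $m_2$ pass. The quer-sphere $q^{1\pm}$ attached to that curvature sphere is, by definition, the sphere (unique up to orientation) that intersects $s$ orthogonally along its curvature circle. So the claim amounts to: the 2-ortho-circle $\Gamma$ lies on $q^{1+}$ (or $q^{1-}$).

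The key step is therefore to show that $q^{1+}$ contains both $m_1$ and $m_2$ — equivalently $\lspan{\mathfrak{m}_1,\mathfrak{q}^{1+}}=\lspan{\mathfrak{m}_2,\mathfrak{q}^{1+}}=0$ — and that $q^{1+}$ intersects $\Gamma$ orthogonally. For the first part: $m_1,m_2$ lie on the curvature circle of $s$, and the quer-sphere $q^{1+}$ passes through every point of that curvature circle by construction, so $m_1,m_2\in q^{1+}$. For orthogonality: by the remark following Fact~\ref{fact_circle_two_all}, $\Gamma$ intersects a sphere orthogonally iff that sphere is orthogonal to all spheres in $\gamma^\perp$; alternatively, using the orthogonal-circle description, I would check $\mathfrak{q}^{1+}\in\gamma^\perp$ directly. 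Since $\mathfrak{q}^{1+}\perp\mathfrak{s}$ (the quer-sphere meets $s$ orthogonally), $\mathfrak{q}^{1+}\perp\mathfrak{p}$ is false in general — rather, one computes $\lspan{\mathfrak{q}^{1+},\mathfrak{s}+\lspan{\mathfrak{s},\mathfrak{p}}\mathfrak{p}}$: this is the orthogonality condition of two spheres in the M\"obius sense, and it holds exactly because $q^{1+}$ meets $s$ orthogonally. Together with $\mathfrak{q}^{1+}\perp\mathfrak{m}_1,\mathfrak{m}_2$, this puts $\mathfrak{q}^{1+}$ in $\gamma^\perp$, so $\Gamma$ lies on $q^{1+}$.

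The main obstacle I anticipate is bookkeeping with orientations and with the distinction between the M\"obius orthogonality condition $\lspan{\mathfrak{s}_1,\mathfrak{s}_2+\lspan{\mathfrak{s}_2,\mathfrak{p}}\mathfrak{p}}=0$ and the naive $\lspan{\mathfrak{s}_1,\mathfrak{s}_2}=0$: one must be careful that ``$q$ intersects $s$ orthogonally along its curvature circle'' is exactly the statement that yields $\mathfrak{q}^{1+}\in\lspann{\mathfrak{m}_1,\mathfrak{m}_2,\mathfrak{s}+\lspan{\mathfrak{s},\mathfrak{p}}\mathfrak{p}}^\perp$, and that the sign/orientation ambiguity in $q^{1\pm}$ does not affect which unoriented sphere $\Gamma$ lies on. A clean way to package all of this is to observe that $\gamma=\lspann{\mathfrak{m}_1,\mathfrak{m}_2,\mathfrak{s}+\lspan{\mathfrak{s},\mathfrak{p}}\mathfrak{p}}$ spans exactly the point spheres and ``orthogonal carrier'' of the curvature circle of $s$, so $\gamma^\perp$ contains every sphere orthogonal to $s$ along that circle — and the quer-sphere is by definition such a sphere; hence $q^{1\pm}\in\gamma^\perp$, which is the assertion. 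I would also remark that the other quer-sphere congruence $\mathcal{Q}_2$ need not contain $\Gamma$, since $m_1,m_2$ lie on a single curvature circle from the $D_1$-family only, and note that this lemma is what lets us, in the sequel, realize 2-ortho-circles inside the quer-sphere M-sphere pencils used to build the relevant Dupin cyclides.
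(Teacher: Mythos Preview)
Your proof is correct and follows essentially the same route as the paper: use Lemma~\ref{lem_2-ortho_points} to place $m_1,m_2$ on a common curvature sphere $s$, write $\gamma=\lspann{\mathfrak{m}_1,\mathfrak{m}_2,\mathfrak{s}+\lspan{\mathfrak{s},\mathfrak{p}}\mathfrak{p}}$, and verify that the quer-sphere $q$ associated to $s$ is perpendicular to all three generators of $\gamma$, hence $\mathfrak{q}\in\gamma^\perp$. One small expository slip: you phrase part of the goal as showing that ``$q^{1+}$ intersects $\Gamma$ orthogonally,'' but the condition $\mathfrak{q}\in\gamma^\perp$ means that $\Gamma$ \emph{lies on} $q$ (all circle points are on $q$), not that $\Gamma$ meets $q$ orthogonally; your subsequent computation is nonetheless exactly the right one.
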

\begin{proof}
Let $\Delta$ be a Dupin cyclide and consider the 2-ortho circle $\Gamma =(\gamma, \gamma^\perp)$ that intersects $\Delta$ in the two point spheres $m_1$ and $m_2$. From Lemma \ref{lem_2-ortho_points}, we know that $m_1$ and $m_2$ lie on a common curvature circle and therefore on a  curvature sphere; we denote this curvature sphere by $\bar{s}^i$. Hence, by above, the point spheres of this 2-ortho circle lie in the subspace
\begin{equation*}
\gamma=\lspann{\mathfrak{m}_1, \ \mathfrak{m}_2, \ \bar{\mathfrak{s}}^i+ \lspan{\bar{\mathfrak{s}}^i, \mathfrak{p}}\mathfrak{p} } \in G_{(2,1)}^\mathcal{P}.
\end{equation*}

\noindent Moreover, let $\bar{q}^i$ denote a quer-sphere that intersects the curvature sphere $\bar{s}^i$ orthogonally, that is, $\lspan{\bar{\mathfrak{q}}^i, \bar{\mathfrak{s}}^i+\lspan{\bar{\mathfrak{s}}^i, \mathfrak{p}}\mathfrak{p}}=0$.

Since the point spheres $m_1$ and $m_2$ also lie on $\bar{q}^i$, $\lspan{\mathfrak{m}_1, \bar{\mathfrak{q}}^i}=\lspan{\mathfrak{m}_2, \bar{\mathfrak{q}}^i}=0$, we conclude that $\bar{\mathfrak{q}}^i \in \gamma^\perp$ which proves the claim.
\end{proof}
\noindent This immediately shows the following property for all 2-ortho circles that intersect a Dupin cyclide along a fixed curvature circle:
\begin{cor}
All 2-ortho-circles that orthogonally intersect the same curvature circle lie on a common sphere, namely the corresponding quer-sphere. 
\end{cor}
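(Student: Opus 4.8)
The plan is to read off the corollary from the argument already used in the proof of Lemma~\ref{lem_ortho_on_quer}, the key observation being that the quer-sphere produced there depends only on the curvature sphere carrying the fixed curvature circle, and not on the individual 2-ortho-circle. So first I would fix the curvature circle $c$ in question, record the curvature sphere $\bar{s}$ of $\Delta$ on which $c$ lies, and let $\bar{q}$ be its quer-sphere, that is, the (up to orientation unique) sphere with $\lspan{\bar{\mathfrak{q}},\bar{\mathfrak{s}}+\lspan{\bar{\mathfrak{s}},\mathfrak{p}}\mathfrak{p}}=0$ whose point spheres are exactly the points of $c$.

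Next I would take an arbitrary 2-ortho-circle $\Gamma=(\gamma,\gamma^\perp)$ meeting $c$; by Lemma~\ref{lem_2-ortho_points} this means precisely that its two orthogonal intersection points $m_1,m_2$ with $\Delta$ lie on $c$. By~(\ref{equ_ortho_circle}) its plane of point spheres is $\gamma=\lspann{\mathfrak{m}_1,\mathfrak{m}_2,\bar{\mathfrak{s}}+\lspan{\bar{\mathfrak{s}},\mathfrak{p}}\mathfrak{p}}$, and one then checks that $\bar{q}$ is orthogonal to each of the three spanning vectors: it contains $m_1$ and $m_2$ since these lie on $c\subset\bar{q}$, and $\lspan{\bar{\mathfrak{q}},\bar{\mathfrak{s}}+\lspan{\bar{\mathfrak{s}},\mathfrak{p}}\mathfrak{p}}=0$ by the defining property of a quer-sphere. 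Hence $\bar{\mathfrak{q}}\in\gamma^\perp$, so $\Gamma$ lies on $\bar{q}$; since $\bar{q}$ was chosen without any reference to $\Gamma$, all 2-ortho-circles meeting $c$ lie on this single sphere, which is the assertion.

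I do not expect a genuine obstacle here: the statement is essentially immediate once the independence of $\bar{q}$ from $\Gamma$ is noted. The only points worth a sentence of care are the reading of ``orthogonally intersect the same curvature circle'' — a circle orthogonal to $\Delta$ is orthogonal to every curve on $\Delta$ through the point of intersection, so the phrase simply amounts to $m_1,m_2\in c$ — and a remark that the degenerate cases excluded throughout this subsection (intersections at singular curvature spheres) are covered by the same computation using the appropriate limiting quer-sphere.
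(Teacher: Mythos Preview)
Your proposal is correct and is essentially the paper's approach: the paper states the corollary as an immediate consequence of Lemma~\ref{lem_ortho_on_quer} (``This immediately shows\ldots''), and you simply rerun that lemma's computation while making explicit that the quer-sphere $\bar q$ depends only on the curvature sphere $\bar s$ carrying $c$, not on the particular 2-ortho-circle. One small wording issue: describing $\bar q$ as the sphere ``whose point spheres are exactly the points of $c$'' is not quite right --- $\bar q$ is a 2-sphere containing $c$, so you want ``which contains the points of $c$'' instead.
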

%%%%%%%%%%%%%%%%%%%%%%%%%%%%%%%%%%%%%%%%%%%%%%%%%%%%%%%%%%
%%%%%%%%%%%% 2-ortho-Dupin cyclides %%%%%%%%%%%%%%%%%%%%%%
%%%%%%%%%%%%%%%%%%%%%%%%%%%%%%%%%%%%%%%%%%%%%%%%%%%%%%%%%%
\noindent As already discussed in \cite{ortho_circles}, the 2-ortho circles along two fixed curvature lines of a Dupin cyclide yield another Dupin cyclide. Below we will prove that these two Dupin cyclides can be obtained by using the same evolution map.
\begin{defi}
Let $\Delta$ be a Dupin cyclide, then we call $\Omega$ a \emph{2-ortho Dupin cyclide} of $\Delta$ if it intersects~$\Delta$ orthogonally along two curvature lines. 
\end{defi}
%%%%%%%%%%%%%%%%%%%%%%%%%%%%%%%%%%%%%%%%%%%%%%%
%
%%%%%%%%%%%%%%%%%%%%%%%%%%%%%%%%%%%%%%%%%%%%%%%%%%%%%%%%%%%%%
\noindent Thus, one of the families of curvature circles of a 2-ortho Dupin cyclide $\Omega$ consists of the 2-ortho circles along two fixed curvature lines of $\Delta$. Hence, by Lemma~\ref{lem_ortho_on_quer}, one of the quer-sphere congruences of the two Dupin cyclides coincides and 2-ortho Dupin cyclides of $\Delta$ are obtained via a simultaneous evolution in the following way:
\begin{itemize}
\item let $\Delta$ be a Dupin cyclide with curvature line parametrization $(u,t) \mapsto \sigma_t(\gamma(u)) \in \PL$ obtained via the evolution map of $\Delta$ based at $t_0$ with initial curvature circle $\Gamma=(\gamma, \gamma^\perp)$ on the initial quer-sphere~$q(t_0)$ (see Corollary~\ref{cor_evolution_quer2}); 
\item the parametrization $u \mapsto \tilde{\gamma}(u) \in \tilde{\gamma} \cap \PL$ of any circle $\tilde{\Gamma}=(\tilde{\gamma}, \tilde{\gamma}^\perp)$ on $q(t_0)$ that intersects the circle $\gamma$ orthogonally gives rise to a 2-ortho Dupin cyclide via
\begin{equation*}
(u,t) \mapsto \sigma_t(\tilde{\gamma}(u)).
\end{equation*}
\end{itemize}
Since the evolution map $t \mapsto \sigma_t$ consists of M-Lie inversions, 2-ortho circles are mapped to 2-ortho circles. Therefore, by Corollary \ref{cor_evolution_quer2}, the proposed construction indeed yields a pair of Dupin cyclides that orthogonally intersect along two common curvature lines.  
%%%%%%%%%%%%%%%%%%%%%%%%%%%%%%%%%%%%%%%%%%%%
%
%%%%%%%%%%%%%%%%%%%%%%%%%%%%%%%%%%%%%%%%%%%%
\begin{cor}\label{cor_2ortho_Dupin}
The 1-parameter family of 2-ortho circles along two fixed curvature lines of the same family of a Dupin cyclide yields curvature circles of a 2-ortho Dupin cyclide.
\end{cor}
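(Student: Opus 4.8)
The plan is to recognize the $1$-parameter family of 2-ortho circles as the evolution of a single initial circle and then to invoke Corollary~\ref{cor_evolution_quer2}. First I would fix notation: write $\Delta = D_1\oplus_\perp D_2$ for the Dupin cyclide, let $t\mapsto s^i(t)$ denote its two curvature sphere congruences, $c^i(t)$ the curvature circle lying on $s^i(t)$, and $t\mapsto\sigma^i_t$ the evolution map of $\Delta$ based at $t_0$. I would recall from Section~\ref{subset_param} that $\sigma^i_t$ maps $\Delta$ onto itself, interchanges the curvature spheres in $D_i$, fixes the curvature spheres and curvature circles of the other family, sends $c^i(t_0)$ to $c^i(t)$, propagates the quer-sphere congruence via $q^i(t)=\sigma^i_t(q^i(t_0))$, and --- by the remark following Fact~\ref{fact_minv_fix} --- coincides, away from singular configurations, with the evolution map associated to the M-sphere pencil spanned by the quer-sphere congruence $\mathcal{Q}_i$, based at $q^i(t_0)$. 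Then I would fix two curvature circles of the other family, say $c^j(v_1), c^j(v_2)$ with $j\neq i$; for each $t$ the curvature circle $c^i(t)$ meets them in two points $m_1(t), m_2(t)$ which, by Lemma~\ref{lem_2-ortho_points}, lie on the common curvature circle $c^i(t)$, so by~(\ref{equ_ortho_circle}) the 2-ortho circle along $c^j(v_1), c^j(v_2)$ attached to $c^i(t)$ is the unique circle $\Gamma_t$ through $m_1(t), m_2(t)$ intersecting $s^i(t)$ orthogonally. Using Lemma~\ref{lem_ortho_on_quer} I would note that $\Gamma_t$ lies on the quer-sphere $q^i(t)$ and, being orthogonal to $s^i(t)$, is orthogonal on $q^i(t)$ to the curvature circle $c^i(t)=s^i(t)\cap q^i(t)$.

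Next I would show that the evolution map carries $\Gamma_{t_0}$ onto $\Gamma_t$. Since $\sigma^i_t$ is a M\"obius transformation it preserves circles, spheres and orthogonality; because it fixes $c^j(v_1)$ and $c^j(v_2)$ and sends $c^i(t_0)$ to $c^i(t)$, it sends $m_1(t_0)=c^i(t_0)\cap c^j(v_1)$ to $m_1(t)$ and $m_2(t_0)$ to $m_2(t)$; and because it maps $\Delta$ onto itself and $s^i(t_0)$ onto $s^i(t)$, the circle $\sigma^i_t(\Gamma_{t_0})$ passes through $m_1(t), m_2(t)$, is orthogonal to $s^i(t)$, and meets $\Delta$ only in those two points --- hence, by the uniqueness in~(\ref{equ_ortho_circle}), it equals $\Gamma_t$. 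Thus $t\mapsto\Gamma_t$ is precisely the evolution of the circle $\Gamma_{t_0}$ on $q^i(t_0)$, and Corollary~\ref{cor_evolution_quer2} applies with initial sphere $q^i(t_0)$ and initial circle $\Gamma_{t_0}$: together with the induced parametrization by circle points, the $\Gamma_t$ form a curvature line parametrization of a Dupin cyclide $\Omega$ (the degenerate alternative of Corollary~\ref{cor_evolution_quer2}, all $\Gamma_t$ lying on one sphere, being excluded since the points $m_1(t), m_2(t)$ genuinely sweep out the two distinct circles $c^j(v_1), c^j(v_2)$). Finally I would check that $\Omega$ is a 2-ortho Dupin cyclide of $\Delta$: the other family of curvature circles of $\Omega$ contains the common curvature lines $c^j(v_1)$, traced by $t\mapsto m_1(t)$, and $c^j(v_2)$, traced by $t\mapsto m_2(t)$; and since at $m_1(t)$ the tangent of $\Gamma_t$ is orthogonal to the tangent plane of $\Delta$, the tangent planes of $\Omega$ and $\Delta$ there share only the tangent line of $c^j(v_1)$ and are orthogonal in the complementary directions --- likewise along $c^j(v_2)$ --- so $\Omega$ meets $\Delta$ orthogonally along these two curvature lines, which proves the claim.

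The step I expect to be the main obstacle is the identification $\sigma^i_t(\Gamma_{t_0})=\Gamma_t$, that is, verifying that the abstract evolution map exactly reproduces the geometrically defined family of 2-ortho circles; this rests on the M\"obius invariance of the whole configuration together with the uniqueness of the orthogonal circle through two prescribed points of a sphere recorded in~(\ref{equ_ortho_circle}). The only other point needing attention is ruling out the degenerate alternative in Corollary~\ref{cor_evolution_quer2}, which is immediate once one observes that the intersection points genuinely move along the two fixed curvature circles.
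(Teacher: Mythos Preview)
Your proposal is correct and follows essentially the same route as the paper: the paper's argument (presented as the discussion immediately preceding the corollary) takes an initial 2-ortho circle $\tilde{\Gamma}$ on the quer-sphere $q(t_0)$ orthogonal to the initial curvature circle, evolves it via the evolution map $t\mapsto\sigma_t$, invokes Corollary~\ref{cor_evolution_quer2} to obtain a Dupin cyclide, and uses that M-Lie inversions send 2-ortho circles to 2-ortho circles to conclude the result is a 2-ortho Dupin cyclide. Your version is simply a more explicit unpacking of the same argument --- in particular, your verification that $\sigma^i_t(\Gamma_{t_0})=\Gamma_t$ is exactly the paper's one-line remark that ``2-ortho circles are mapped to 2-ortho circles'', and your final check of orthogonality along $c^j(v_1),c^j(v_2)$ spells out what the paper leaves implicit.
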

\noindent To conclude this subsection, we comment on the existence of a common 2-ortho Dupin cyclide for two prescribed Dupin cyclides:
\begin{cor}
Two Dupin cyclides admit a common 2-ortho Dupin cyclide if and only if they share a common quer-sphere congruence.
\end{cor}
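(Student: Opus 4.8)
The plan is to characterise 2-ortho Dupin cyclides of a given Dupin cyclide $\Delta$ through the quer-sphere congruences, using Lemma~\ref{lem_ortho_on_quer} and Corollary~\ref{cor_2ortho_Dupin} as the main ingredients. The statement has two directions. For the "if" direction, suppose two Dupin cyclides $\Delta$ and $\hat\Delta$ share a common quer-sphere congruence, say $\mathcal{Q}$, lying in an M-sphere pencil $\mathcal{M}$. By the discussion preceding Corollary~\ref{cor_2ortho_Dupin}, both cyclides arise from the evolution map associated to $\mathcal{M}$ based at some common $t_0$, applied to suitable initial curvature circles $\Gamma$ and $\hat\Gamma$ on the initial sphere $q_0=q(t_0)$. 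To produce a common 2-ortho Dupin cyclide, I would pick a circle $\tilde\Gamma=(\tilde\gamma,\tilde\gamma^\perp)$ on $q_0$ that intersects \emph{both} $\gamma$ and $\hat\gamma$ orthogonally; such a circle exists because on a fixed sphere the orthogonal-circle condition is (generically) one linear condition each, and two generic circles on the sphere admit a common orthogonal circle. Then $(u,t)\mapsto \sigma_t(\tilde\gamma(u))$ is, by Corollary~\ref{cor_evolution_quer2}, either a Dupin cyclide or a sphere, and since the $\sigma_t$ are M-Lie inversions preserving orthogonality, it meets both $\Delta$ and $\hat\Delta$ orthogonally along two curvature lines. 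Hence it is a common 2-ortho Dupin cyclide.

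For the "only if" direction, suppose $\Omega$ is a 2-ortho Dupin cyclide of both $\Delta$ and $\hat\Delta$. By Corollary~\ref{cor_2ortho_Dupin}, one family of curvature circles of $\Omega$ consists of 2-ortho circles along two fixed curvature lines of $\Delta$, and by Lemma~\ref{lem_ortho_on_quer} each such 2-ortho circle lies on a quer-sphere of $\Delta$ — in fact, by the corollary right after Lemma~\ref{lem_ortho_on_quer}, the 2-ortho circles along a fixed curvature circle of $\Delta$ all lie on one quer-sphere of $\Delta$. Thus the one-parameter family of quer-spheres of $\Omega$ (for that curvature family) coincides with a one-parameter subfamily of the quer-spheres of $\Delta$; since a quer-sphere congruence of a Dupin cyclide spans its whole M-sphere pencil, this forces the corresponding M-sphere pencils — hence the quer-sphere congruences — of $\Omega$ and $\Delta$ to agree. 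Running the identical argument with $\hat\Delta$ in place of $\Delta$ shows the same quer-sphere congruence of $\Omega$ equals one of $\hat\Delta$. Therefore $\Delta$ and $\hat\Delta$ share a common quer-sphere congruence.

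The main obstacle I anticipate is bookkeeping the two curvature-sphere families correctly: a Dupin cyclide has \emph{two} quer-sphere congruences $\mathcal{Q}_1,\mathcal{Q}_2$ (one per curvature family), and a 2-ortho Dupin cyclide of $\Delta$ shares only \emph{one} of them with $\Delta$. So "share a common quer-sphere congruence" must be read as: there is a choice of one congruence from each of $\Delta$ and $\hat\Delta$ that coincide. In the "only if" direction $\Omega$ picks out which congruence of $\Delta$ (resp. $\hat\Delta$) is involved, namely the one transverse to the shared curvature lines; the matching follows automatically. In the "if" direction one has freedom in which common orthogonal circle $\tilde\Gamma$ to take, and degenerate positions (where $\tilde\gamma$ fails to meet $\gamma$ or $\hat\gamma$ orthogonally, or where the evolved family collapses to a sphere) should be noted as the genuinely exceptional cases, consistent with the running convention of excluding singular configurations. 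I expect this to be a short argument once Lemma~\ref{lem_ortho_on_quer} and Corollary~\ref{cor_2ortho_Dupin} are invoked; the only real content is the existence of a common orthogonal circle on the shared initial sphere.
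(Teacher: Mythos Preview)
Your approach matches the paper's intended reasoning: the corollary is stated there without proof, as an immediate consequence of the observation (made just before Corollary~\ref{cor_2ortho_Dupin}) that a Dupin cyclide and any of its 2-ortho Dupin cyclides share a quer-sphere congruence, together with the simultaneous-evolution construction. Your write-up fleshes out exactly these two ingredients, and the ``if'' direction via a common orthogonal circle on the shared initial quer-sphere is precisely what the paper's construction gives.

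There is, however, a gap in your ``only if'' direction that you brush past. You assert that running the argument with $\hat\Delta$ shows ``the same quer-sphere congruence of $\Omega$'' equals one of $\hat\Delta$, but nothing you have said forces it to be the \emph{same} one of the two. The Dupin cyclide $\Omega$ has two quer-sphere congruences; being 2-ortho to $\Delta$ identifies one of them with a congruence of $\Delta$, while being 2-ortho to $\hat\Delta$ identifies one with a congruence of $\hat\Delta$ --- and a priori these could be the two \emph{different} congruences of $\Omega$. In that scenario $\Delta$ and $\hat\Delta$ each share a congruence with $\Omega$ without sharing one with each other, and such configurations are easy to build (pick $\Omega$, then choose $\Delta$ via an orthogonal initial circle on a sphere of $\mathcal{Q}_1^\Omega$ and $\hat\Delta$ via one on a sphere of $\mathcal{Q}_2^\Omega$). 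Your closing paragraph on bookkeeping acknowledges the two families but does not resolve this; you need either to argue that the relevant family of $\Omega$ must coincide for $\Delta$ and $\hat\Delta$, or to read the statement --- as the paper implicitly does, in view of the remark immediately following about infinitely many common 2-ortho cyclides and the Ribaucour relation among all involved cyclides --- as concerning a common 2-ortho Dupin cyclide with respect to the same curvature family.
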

\noindent In particular, if two Dupin cyclides share one 2-ortho Dupin cyclide, then there are infinitely many common 2-ortho Dupin cyclides. Moreover, any two of the involved Dupin cyclides are then related by a Ribaucour transformation (for more details see Subsections \ref{subsect_dc_cyclic} and \ref{subsect_dc_via_evolution}).
%%%%%%%%%%%%%%%%%%%%%%%%%%%%%%%%%%%%%%%%%%%%%%%%%%%%
%
%
%%%%%%%%%%%%%%%%%%%%%%%%%%%%%%%%%%%%%%%%%%%%%%%%%%%%%%%%%%%%
\subsection{Dupin cyclidic systems with a family of totally umbilic coordinate surfaces}\label{subsect_tos_2_ortho} The evolution map of an M-sphere pencil can also be used to construct first examples of Dupin cyclidic systems, that is, triply orthogonal coordinate systems where all orthogonal trajectories are circular.

Suppose that $t \mapsto q(t)$ parametrizes the spheres of the M-sphere pencil $\mathcal{M}$ and denote the associated evolution map of $\mathcal{M}$ based at $t_0$ by $t \mapsto \sigma_t$. Moreover, let $(x,y) \mapsto c(x,y) \in \PL$ be a 2-dimensional orthogonal coordinate system on $q_0:=q(t_0)$ that consists of circular coordinate lines (cf.\,Figure \ref{fig_tos}, left). Then
\begin{equation*}
(x,y,t) \mapsto \sigma_t(c(x,y)) \in \PL
\end{equation*}
yields a triply orthogonal coordinate system with circular coordinate lines (see Figure \ref{fig_tos}, right). The coordinate surface families $x=const$ and $y=const$ are Dupin cyclides or spherical surfaces (cf.\,Corollary \ref{cor_evolution_quer2}). By construction, any two Dupin cyclides of different families form a pair of 2-ortho Dupin cyclides. The third coordinate surface family $t=const$ consists of the spheres of the M-sphere pencil; those are the quer-spheres of the Dupin cyclides in the two families $x=const$ and $y=const$ (for an example see Figure \ref{fig_types_ell}, right).

Well-known examples of this type are (M\"obius transformations of) spherical, bipolar cylindrical and toroidal coordinates \cite{MR947546}.
%%%%%%%%%%%%%%%%%%%%%%%%%%%%%%%%%%%%%%%%%%%%%%%5
%%
%%
%%%%%%%%%%%%%%%%%%%%%%%%%%%%%%%%%%%%%%%%%%%%%%%%%%%%%
\section{Dupin cyclidic systems}\label{sect_dcsystems}
\noindent In this section we characterize and geometrically construct triply orthogonal systems where all orthogonal trajectories are circular. Due to Dupin's famous theorem \cite{darboux_ortho, salkowski}, those circles are then curvature lines on the involved coordinate surfaces, which are therefore Dupin cyclides or totally umbilic surfaces \cite{MR1246529}. 

To avoid systems that only consist of totally umbilic coordinate surfaces, as for example the Euclidean coordinates, we make the following definition:
\begin{defi}
A triply orthogonal system where all orthogonal trajectories are circles and at least one Lam\'e family contains Dupin cyclides is called a \emph{Dupin cyclidic system (DC-system)}. 
\end{defi}
\subsection{DC-systems via special cyclic circle congruences}\label{subsect_dc_cyclic}
Classically \cite{darboux_ortho, rib_cyclic, salkowski}, a triply orthogonal system is called \emph{cyclic} if the orthogonal trajectories in one coordinate direction are all circular. This is the case if and only if two coordinate surface families are constituted of channel surfaces, that is, surfaces foliated by one family of circular curvature lines.
 
Recall \cite{rib_cyclic} that any cyclic system may be obtained by the following construction: let $f$ and $\hat{f}$ be two Legendre maps that form a \emph{Ribaucour pair}, that is, the surfaces envelop a common sphere congruence $r$ such that curvature lines correspond. Furthermore, consider the 2-dimensional circle congruence compound of circles that intersect the enveloped sphere congruence orthogonally in the two surface points of contact of $f$ and $\hat{f}$. This circle congruence then admits a 1-parameter family of orthogonal surfaces (which includes also $f$ and $\hat{f}$) that yields a Lam\'e family of a cyclic system. The constructed circles provide the circular orthogonal trajectories of this cyclic system. 

In this way any Ribaucour pair of surfaces gives rise to an associated cyclic system. Clearly, starting with specific Ribaucour pairs of Legendre maps, restricts the geometry of the associated cyclic systems as we will see below for Dupin cyclidic systems.

Since all orthogonal trajectories of a DC-system are circular, those systems are cyclic in all three coordinate directions. In this subsection, we shall discuss how the construction outlined above leads to DC-systems. 
\\\\To start with we recall some facts on Ribaucour pairs of Legendre maps, a well-defined notion in Lie sphere geometry \cite{MR2254053}. Thus, suppose that $f, \hat{f}: M \rightarrow \ZZ$ is a Ribauocur pair simultaneously parametrized along their corresponding curvature leaves such that 
\begin{equation*}
r: M \rightarrow \LL, \ r(u,v):= f(u,v) \cap \hat{f}(u,v)
\end{equation*}
yields the enveloped Ribaucour sphere congruence. As shown in \cite{MR3871108}, the Ribaucour sphere congruence then gives rise to two special congruences of Dupin cyclides, provided by the two families of $(2,1)$-planes $C_i:M \rightarrow G_{(2,1)}$,
\begin{equation*}
C_1=\lspann{\mathfrak{r}, \partial_v\mathfrak{r}, \partial_{vv} \mathfrak{r}} \subset \mathbb{R}^{4,2} \ \ \ \text{and } \ \ C_2=\lspann{\mathfrak{r}, \partial_u\mathfrak{r}, \partial_{uu} \mathfrak{r}} \subset \mathbb{R}^{4,2};
\end{equation*}
those Dupin cyclides are called \emph{Ribaucour cyclides} of the Ribaucour pair. If $s^{i}$ and $\hat{s}^{i}$ denote the curvature sphere congruences of $f$ and $\hat{f}$, then
\begin{equation}\label{equ_R_dupin_perp}
\mathfrak{s}^{1},  \hat{\mathfrak{s}}^{1}, \partial_u \mathfrak{s}^{1}, \partial_u\hat{\mathfrak{s}}^{1} \in C_1^\perp \ \ \text{ and } \ \ \mathfrak{s}^{2},  \hat{\mathfrak{s}}^{2}, \partial_v \mathfrak{s}^{2}, \partial_v\hat{\mathfrak{s}}^{2} \in C_2^\perp. 
\end{equation}
Hence, the Ribaucour cyclides are in first order contact with $f$ and $\hat{f}$.
\\\\We now turn to the Ribaucour pairs that are most relevant for this work, namely those that consist of two Dupin cyclides. Since both curvature sphere congruences of a Legendre map  degenerate to sphere curves $u \mapsto s^{1}(u)$ and $v \mapsto s^{2}(v)$ if and only if the surface is a Dupin cyclide, the situation considerably simplifies (see also \cite[Thm 4.5]{MR3871108}): the Ribaucour cyclide congruences of such a Ribaucour pair also become 1-dimensional: $u \mapsto C_1^\perp(u)$ and $v \mapsto C_2^\perp(v)$.

\begin{cor}\label{cor_r_cyclides_dupin}
A Ribaucour pair consists of two Dupin cyclides if and only if the Ribaucour cyclides are constant along the corresponding curvature directions.
\end{cor}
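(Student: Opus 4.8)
The plan is to exploit the defining formulas for the Ribaucour cyclide congruences $C_1 = \lspann{\mathfrak{r}, \partial_v\mathfrak{r}, \partial_{vv}\mathfrak{r}}$ and $C_2 = \lspann{\mathfrak{r}, \partial_u\mathfrak{r}, \partial_{uu}\mathfrak{r}}$ together with the characterization of Dupin cyclides as those Legendre maps whose curvature sphere congruences degenerate to sphere \emph{curves}. Since a surface $f$ is a Dupin cyclide if and only if $s^1$ depends only on $u$ and $s^2$ depends only on $v$, the core of the argument is to translate "both $f$ and $\hat f$ are Dupin cyclides" into a statement that $C_1$ is independent of $u$ and $C_2$ is independent of $v$, and conversely.

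First I would record that, by (\ref{equ_R_dupin_perp}), the four spheres $s^1, \hat s^1, \partial_u s^1, \partial_u\hat s^1$ all lie in the $3$-dimensional space $C_1^\perp$, and similarly $s^2, \hat s^2, \partial_v s^2, \partial_v\hat s^2 \in C_2^\perp$. For the forward direction, assume $f$ and $\hat f$ are Dupin cyclides. Then $s^1 = s^1(u)$, $\hat s^1 = \hat s^1(u)$, so the spanning vectors of $C_1^\perp$ — which generically are $s^1, \hat s^1$ and, say, $\partial_u s^1$ (one needs a dimension/genericity check that these three are independent and span) — depend only on $u$; hence $C_1^\perp$, and therefore $C_1$, is a function of $u$ alone, i.e.\ constant along the $v$-curvature direction. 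Symmetrically $C_2$ is constant along the $u$-direction. This is the direction "Ribaucour pair of Dupin cyclides $\Rightarrow$ Ribaucour cyclides constant along the corresponding curvature direction," matching the already-noted fact that the congruences $u \mapsto C_1^\perp(u)$, $v \mapsto C_2^\perp(v)$ become one-dimensional.

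For the converse, suppose the Ribaucour cyclides are constant along the corresponding curvature directions, i.e.\ $C_1 = C_1(u)$ and $C_2 = C_2(v)$. I would argue that the curvature sphere $s^1$ of $f$, which is the unique sphere in $f(u,v)$ lying in $C_1^\perp$ (using that the Ribaucour cyclide is in first-order contact with $f$, so $C_1^\perp \cap f$ picks out a curvature sphere), then automatically depends only on $u$; likewise $s^2$ depends only on $v$, so $f$ is a Dupin cyclide, and the same reasoning applied to $\hat f$ finishes it. The main obstacle I anticipate is the genericity bookkeeping in both directions: one must verify that $C_1^\perp$ really is spanned by $u$-dependent vectors (ruling out degenerate configurations where $\partial_u s^1$ fails to be independent of $s^1, \hat s^1$, which would correspond precisely to the curvature sphere already being constant — a case that is harmless but needs a word), and that the intersection $C_i^\perp \cap f$ selects exactly the curvature sphere rather than an arbitrary sphere of the contact element. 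I would handle these by invoking the first-order contact statement after (\ref{equ_R_dupin_perp}) and the standard fact that a Dupin cyclide's $(2,1)$-plane $D_i$ meets each contact element of the surface in its curvature sphere, keeping the explicit computations to a minimum.
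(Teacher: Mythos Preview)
The paper does not supply a standalone proof of this corollary: it is recorded immediately after the observation that, for a Ribaucour pair of Dupin cyclides, the curvature spheres are curves $u\mapsto s^1(u)$, $v\mapsto s^2(v)$, so that via (\ref{equ_R_dupin_perp}) the spaces $C_i^\perp$ collapse to one-parameter families, with a pointer to \cite[Thm~4.5]{MR3871108}. Your forward direction is precisely this reasoning and agrees with the paper.

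Your converse, however, contains a genuine gap that is not the genericity issue you flag. You write that once $C_1^\perp=C_1^\perp(u)$, the curvature sphere $s^1=f(u,v)\cap C_1^\perp(u)$ ``automatically depends only on $u$''. But $f(u,v)$ still varies with $v$, and the intersection of a $v$-dependent projective line with a $v$-independent $3$-plane has no a~priori reason to be $v$-independent. The missing ingredient is the curvature sphere condition itself: choose a lift $\mathfrak{s}^1(u,v)$ with values in the fixed vector space $C_1^\perp(u)$; then $\partial_v\mathfrak{s}^1\in C_1^\perp(u)$ trivially, while the defining property of $s^1$ as curvature sphere for the $v$-direction gives $\partial_v\mathfrak{s}^1\in f$. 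Hence $\partial_v\mathfrak{s}^1\in f\cap C_1^\perp=\lspann{\mathfrak{s}^1}$ (here one uses that $r\in C_1$ and $C_1\cap C_1^\perp=\{0\}$, so $r\notin C_1^\perp$ and the intersection is one-dimensional), and therefore $s^1$ is projectively constant in $v$. The analogous argument handles $\hat s^1$, $s^2$, $\hat s^2$. The obstacles you anticipate---spanning questions and whether $C_i^\perp\cap f$ singles out the curvature sphere---are real but secondary; the step above is the one your outline does not contain.
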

\noindent This property guarantees that the cyclic systems associated to these types of Ribaucour pairs are DC-systems. We again fix a point sphere complex $\mathfrak{p}$ and work in a M\"obius subgeometry. 

If two corresponding curvature lines of a Ribaucour pair of Dupin cyclides $(\Delta, \hat{\Delta})$ are fixed, the Ribauocur spheres along them are curvature spheres of a fixed Ribaucour cyclide $\Delta_R$. The two fixed curvature lines of $\Delta$ and $\hat{\Delta}$ are then also curvature lines on this Ribaucour cyclide $\Delta_R$. 

Thus, the orthogonal circles associated to the Ribaucour pair $(\Delta, \hat{\Delta})$ along these two fixed curvature lines are 2-ortho circles of the Ribaucour cyclide $\Delta_R$ (see Figure \ref{fig_orth_dupin_system}). By Corollary \ref{cor_2ortho_Dupin}, these 2-ortho circles of $\Delta_R$ then lie on a fixed Dupin cyclide $\bar{\Delta}$. Since the 2-ortho circles belong to the cyclic circle congruence associated to $(\Delta, \hat{\Delta})$, the Dupin cyclide $\bar{\Delta}$ is then a coordinate surface of the associated cyclic system.

Hence, since Dupin cyclides are foliated by circular curvature lines in both directions, all orthogonal trajectories of the cyclic system associated to $(\Delta, \hat{\Delta})$ are circular and this is therefore a DC-system. 

In summary, we have proven:
\begin{thm}\label{thm_dc_via_cyclic}
The cyclic system associated to a Ribaucour pair of Dupin cyclides yields a DC-system. 
\end{thm}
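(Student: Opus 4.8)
The statement to prove (Theorem~\ref{thm_dc_via_cyclic}) is that the cyclic system associated to a Ribaucour pair of Dupin cyclides is a DC-system. The plan is to verify the defining properties of a DC-system: (i) that the construction yields an honest triply orthogonal system, (ii) that all orthogonal trajectories are circles, and (iii) that at least one Lam\'e family contains Dupin cyclides. Most of the work is really an assembly of the results already established in Subsections~\ref{subsect_dc_cyclic} and~\ref{subsect_2_ortho}; the main task is to identify \emph{all three} families of coordinate surfaces and check that each is foliated by circular curvature lines.

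First I would set up notation: let $(\Delta, \hat\Delta)$ be the Ribaucour pair of Dupin cyclides, with corresponding curvature sphere curves $u\mapsto s^1(u)$, $v\mapsto s^2(v)$ and $u\mapsto \hat s^1(u)$, $v\mapsto \hat s^2(v)$, and enveloped Ribaucour congruence $r(u,v)=f(u,v)\cap\hat f(u,v)$. The associated cyclic system consists of the circular orthogonal trajectories (the circles meeting $r$ orthogonally at the two contact points of $f$ and $\hat f$) together with the $1$-parameter family of surfaces orthogonal to this circle congruence; this is the first (``$w=\mathrm{const}$'') coordinate direction, and by the general cyclic-system construction recalled after~\cite{rib_cyclic} the trajectories in this direction are circles by definition. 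The key point, already argued in the paragraph preceding the theorem, is that \emph{each} of these orthogonal surfaces $\bar\Delta$ is itself a Dupin cyclide: fixing two corresponding curvature lines of $(\Delta,\hat\Delta)$, the Ribaucour spheres along them are curvature spheres of a fixed Ribaucour cyclide $\Delta_R$ (Corollary~\ref{cor_r_cyclides_dupin}), the associated orthogonal circles are $2$-ortho circles of $\Delta_R$ (Lemma~\ref{lem_2-ortho_points}, Lemma~\ref{lem_ortho_on_quer}), and by Corollary~\ref{cor_2ortho_Dupin} these $2$-ortho circles sweep out a $2$-ortho Dupin cyclide $\bar\Delta$. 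So the $w=\mathrm{const}$ family consists of Dupin cyclides, and the other two families $u=\mathrm{const}$, $v=\mathrm{const}$ are generated by the original Ribaucour-pair curvature leaves evolved across the system.

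Next I would argue that the two remaining coordinate directions also have circular orthogonal trajectories. The coordinate surfaces in the families $u=\mathrm{const}$ and $v=\mathrm{const}$ intersect each $w=\mathrm{const}$ surface $\bar\Delta$ (a Dupin cyclide) along its coordinate curves; since in a triply orthogonal system two coordinate surfaces from different families meet along common curvature lines (Dupin's theorem), these intersection curves are curvature lines of $\bar\Delta$, hence circular because $\bar\Delta$ is a Dupin cyclide and \emph{both} its curvature-line families are circular. Tracking this across the $w$-parameter, each orthogonal trajectory in the $u$- and $v$-directions lies on a one-parameter family of Dupin cyclides along their circular curvature lines, so these trajectories are circles as well. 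This gives property (ii); property (iii) is immediate since we have exhibited an entire Lam\'e family (in fact at least two) of Dupin cyclides. Finally one should note that the surfaces in the families $u=\mathrm{const}$, $v=\mathrm{const}$ are channel surfaces (they carry one distinguished family of circular curvature lines coming from the cyclic construction) and in fact Dupin cyclides themselves, since each carries circular curvature lines in \emph{both} directions by the argument just given; this is consistent with Corollary~\ref{cor_r_cyclides_dupin} and re-confirms that we are in the DC-system case rather than a degenerate all-umbilic situation.

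The main obstacle I anticipate is purely bookkeeping rather than conceptual: one must be careful that the ``coordinate surface'' $\bar\Delta$ obtained from Corollary~\ref{cor_2ortho_Dupin} for each fixed choice of two curvature leaves is \emph{independent of that choice}, i.e. that varying which two curvature lines of $(\Delta,\hat\Delta)$ one fixes produces the \emph{same} one-parameter family of orthogonal surfaces, so that the $w=\mathrm{const}$ family is well-defined. This follows because all the relevant $2$-ortho circles lie on a common quer-sphere congruence shared by $\Delta$, $\hat\Delta$ and every $\Delta_R$ (Lemma~\ref{lem_ortho_on_quer} together with the remark that $2$-ortho circles along different curvature circles lie on the respective quer-spheres), so the evolution map associated to the corresponding M-sphere pencil is the same throughout; invoking this coherence is the one place where a little care is needed. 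Everything else reduces to citing Dupin's theorem and the already-proven statements about $2$-ortho Dupin cyclides and Ribaucour cyclides.
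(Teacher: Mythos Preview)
Your proposal assembles the right ingredients and cites the correct results, but there is a genuine misidentification that derails the logic. The $2$-ortho Dupin cyclide $\bar\Delta$ produced via Corollary~\ref{cor_2ortho_Dupin} is \emph{not} one of the orthogonal surfaces to the cyclic circle congruence (your ``$w=\mathrm{const}$'' family). By construction $\bar\Delta$ is \emph{foliated} by the $2$-ortho circles, and these circles \emph{are} the circles of the cyclic congruence associated to $(\Delta,\hat\Delta)$; hence $\bar\Delta$ contains those circles as one of its curvature-line families rather than meeting them orthogonally. In the triply orthogonal picture, fixing the pair of corresponding curvature lines at parameter $v=v_0$ produces the coordinate surface $\{v=v_0\}$ (respectively $\{u=u_0\}$ for the other curvature direction), not a $\{w=\mathrm{const}\}$ surface. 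Consequently your ``main obstacle'' evaporates: different choices of $v_0$ are \emph{supposed} to give different surfaces $\bar\Delta$ --- they sweep out the $v=\mathrm{const}$ Lam\'e family --- and there is no well-definedness issue to resolve.

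Once this is corrected the argument becomes exactly the paper's: the $v=\mathrm{const}$ (and, by the same reasoning applied to the other curvature direction, the $u=\mathrm{const}$) coordinate surfaces are the Dupin cyclides $\bar\Delta$; on each such $\bar\Delta$ one curvature-line family consists of the cyclic circles (the $w$-trajectories) while the other consists of the $u$-trajectories (resp.\ $v$-trajectories), which are therefore circular because $\bar\Delta$ is a Dupin cyclide. Together with the circularity of the $w$-trajectories this shows all three families of orthogonal trajectories are circular. Note that the paper does not argue directly that every $w=\mathrm{const}$ surface (the orthogonal surfaces, which include $\Delta$ and $\hat\Delta$) is a Dupin cyclide; that follows a posteriori once all trajectories are known to be circular, but is not needed to verify the definition of a DC-system.
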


\begin{figure}
\hspace*{-1cm}\includegraphics[scale=0.4]{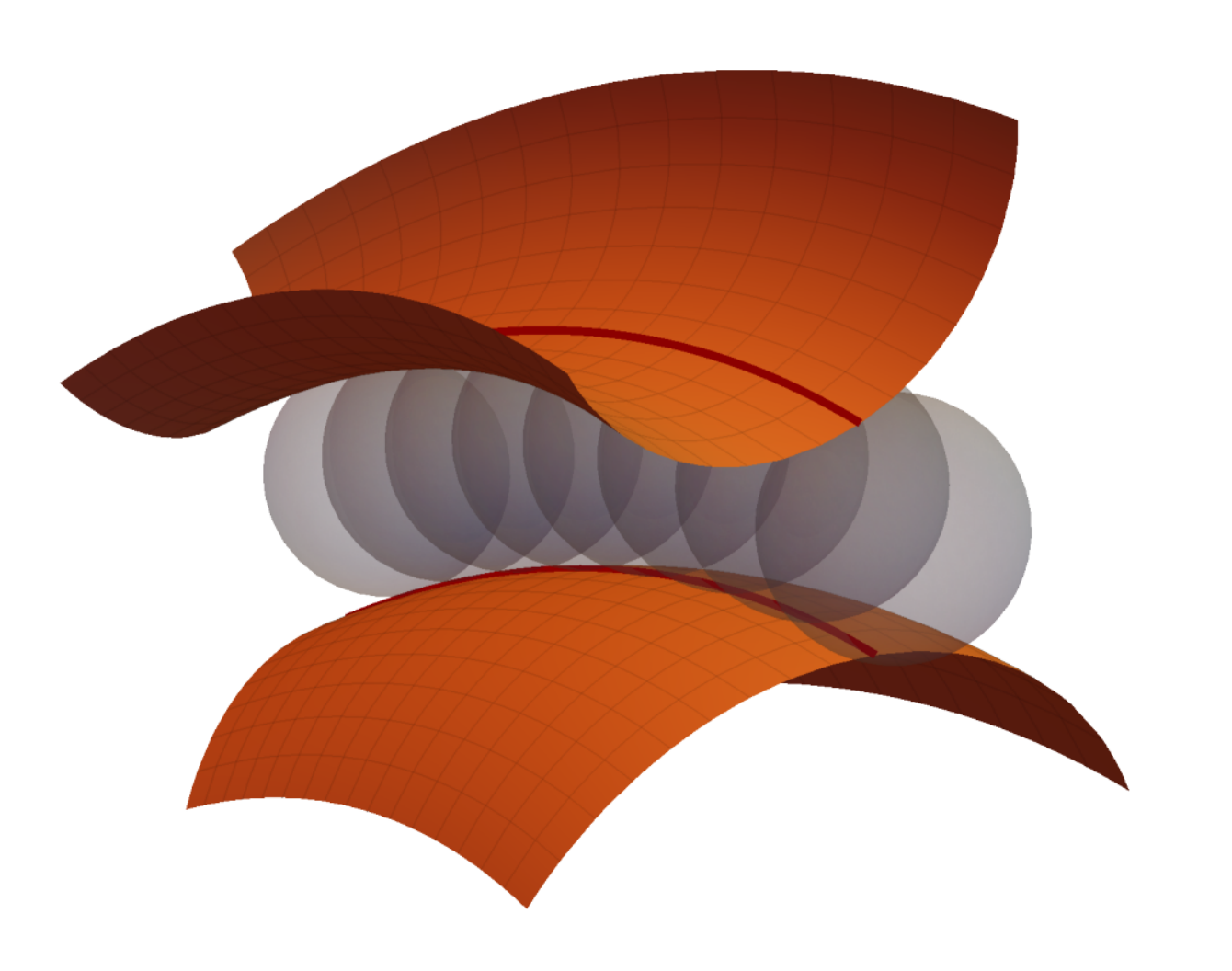}
\hspace*{0.5cm}\includegraphics[scale=0.4]{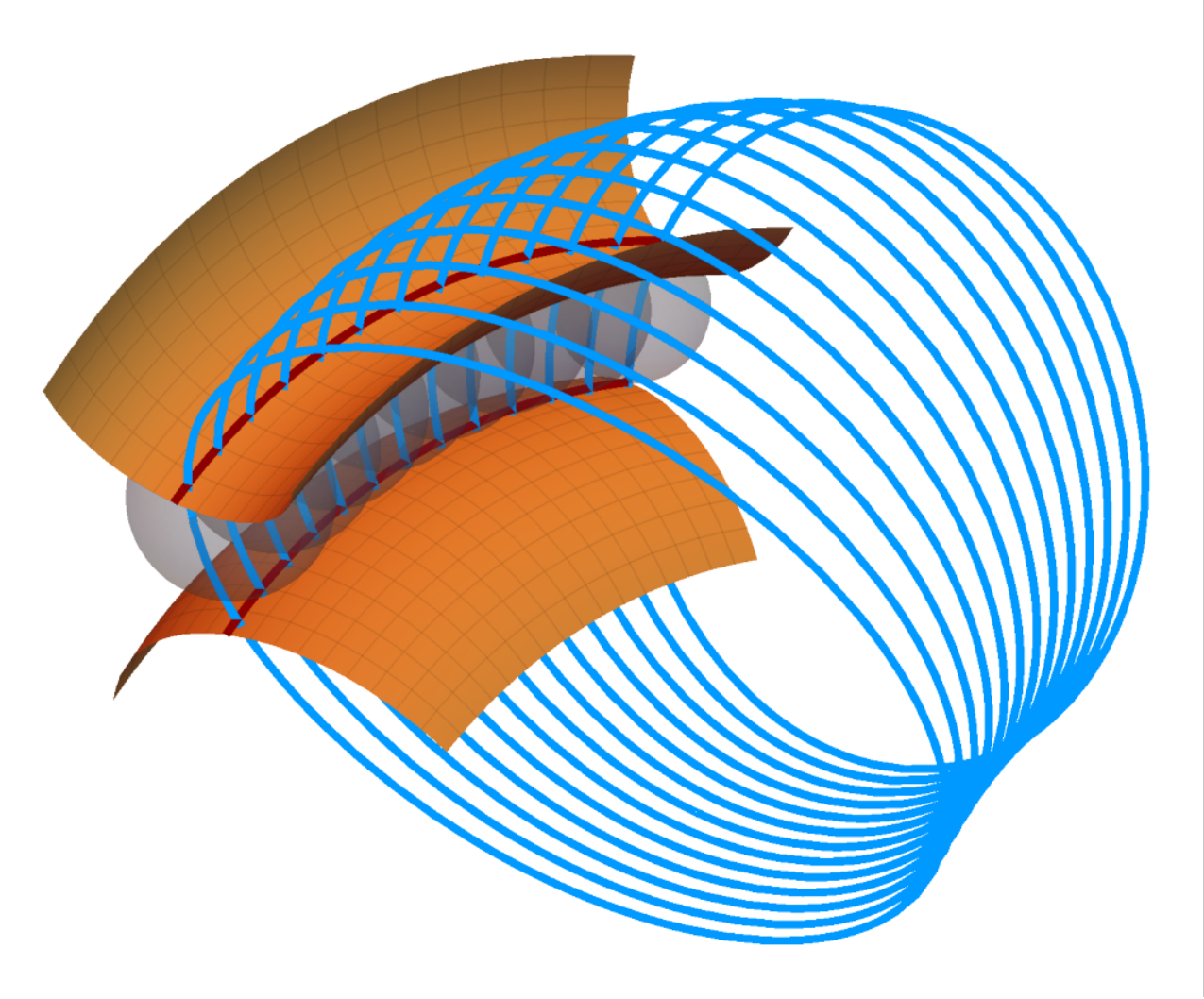}
\caption{\textit{Left.} The Ribaucour spheres (gray) along two corresponding curvature circles of Ribaucour related Dupin cyclides (orange) are curvature spheres of a Dupin cylide, namely of the Ribaucour cyclide. \textit{Right.} The circles of the associated cyclic circle congruence (blue) along two corresponding curvature lines then form a further Dupin cyclide; it is a 2-ortho Dupin cyclide of the Ribaucour cyclide provided by the Ribaucour spheres.}\label{fig_orth_dupin_system}
\end{figure}

\noindent We remark that similar arguments lead to the special case discussed by Darboux \cite[Chap.\,I\!I\!I \S 35]{darboux_ortho}: any Ribaucour pair of a Dupin cyclide and a totally umbilic surface also gives rise to a DC-system.

However, in Subsection \ref{subsect_interpretation} we will see that there are Lam\'e families in DC-systems that do not arise from Darboux's construction.

\subsection{DC-systems via Lie sphere geometric evolution}\label{subsect_dc_via_evolution} In the last subsection we have derived DC-systems from a Ribaucour pair of Dupin cyclides. To gain an explicit construction and a complete characterization of those systems, below we discuss how DC-systems are obtained by a Lie sphere geometric evolution of an initial Dupin cyclide.
%%%%%%%%%%%%%%%%%%%%%%%%%%%%%%%%%%%%
\\\\We start by reconsidering Ribaucour pairs of Dupin cyclides and take up a property implicitly already discussed in \cite[Thm 4.10]{rib_families}:
\begin{prop}\label{prop_dupin_rib}
Two Dupin cyclides form a Ribaucour pair if and only if they are related by a fixed Lie inversion, that is, two Dupin cyclides $\Delta$ and $\hat{\Delta}$ are Ribaucour related if and only if for a suitable labelling of the $(2,1)$-splittings $\Delta=D_1 \oplus_\perp D_2$ and $\hat{\Delta}=\hat{D}_1 \oplus_\perp \hat{D}_2$ there exists a fixed Lie inversion $\sigma$ such that $\hat{D}_1=\sigma(D_1)$ and $\hat{D}_2=\sigma(D_2)$ .
\end{prop}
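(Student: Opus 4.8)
The plan is to prove both directions by working with the curvature sphere congruences, which for Dupin cyclides are sphere \emph{curves} rather than 2-parameter families. For the easy direction, suppose a fixed Lie inversion $\sigma$ satisfies $\hat D_1 = \sigma(D_1)$ and $\hat D_2 = \sigma(D_2)$. Since $\sigma$ is a Lie sphere transformation, it maps the Legendre map $f$ of $\Delta$, parametrized by $(u,v)\mapsto \lspann{\mathfrak s^1(u), \mathfrak s^2(v)}$, to the Legendre map $\hat f$ of $\hat\Delta$; in particular curvature leaves are preserved, so the induced correspondence between $f$ and $\hat f$ respects curvature lines. It remains to exhibit the enveloped sphere congruence: because $\sigma = \sigma_a$ is an involution fixing every sphere of its linear complex $\mathbb P(\mathcal L \cap \{a\}^\perp)$, and because $\hat D_i = \sigma(D_i)$ forces $D_i + \hat D_i$ to be $\sigma$-invariant of dimension $\le 4$, one checks that each contact element $\lspann{\mathfrak s^1(u),\mathfrak s^2(v)}$ and its image $\lspann{\sigma\mathfrak s^1(u),\sigma\mathfrak s^2(v)}$ share a common sphere, namely the fixed sphere in the pencil $\lspann{\mathfrak s^i(u), \sigma\mathfrak s^i(u)}$ lying in $\{a\}^\perp$. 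These common spheres assemble into the Ribaucour sphere congruence $r$, so $(\Delta,\hat\Delta)$ is a Ribaucour pair.

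For the converse, suppose $(\Delta,\hat\Delta)$ is a Ribaucour pair with enveloped sphere congruence $r:M\to\LL$. By Corollary~\ref{cor_r_cyclides_dupin}, the Ribaucour cyclide congruences are constant along their respective curvature directions, so along a fixed $u$-curvature leaf the Ribaucour spheres $v\mapsto r(u,v)$ sweep out curvature spheres of one fixed Ribaucour cyclide, and similarly in the other direction. Fix four generic curvature leaves: two of the first family and two of the second, meeting in four points. These determine four Ribaucour spheres $r_1,r_2,r_3,r_4$, no two in oriented contact (genericity). Along the first $u$-leaf, $r$ interpolates between the curvature sphere $s^1$ of $\Delta$ and $\hat s^1$ of $\hat\Delta$; the relations $\mathfrak s^1,\hat{\mathfrak s}^1,\partial_u\mathfrak s^1,\partial_u\hat{\mathfrak s}^1\in C_1^\perp$ from~(\ref{equ_R_dupin_perp}) say $D_1, \hat D_1\subset C_1^\perp$, a $(2,1)$-space; so $D_1$ and $\hat D_1$ lie in a common $(2,1)$-plane, and likewise for $D_2,\hat D_2$ inside $C_2^\perp$.

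The heart of the proof is then to produce the single Lie inversion. Apply Fact~\ref{fact_fourspheres_lie} to the four spheres $s^1_0=D_1\cap\LL$ at one leaf, $\hat s^1_0 = \hat D_1\cap \LL$ at that leaf, and the analogous pair at a second leaf of the first family: there is a unique Lie inversion $\sigma$ sending $s^1_0\mapsto \hat s^1_0$ and $s^1_1\mapsto\hat s^1_1$, with linear complex $\mathfrak a = \mathfrak s^1_0 - \hat{\mathfrak s}^1_0$. Since $D_1$ and $\hat D_1$ share the common $(2,1)$-plane $C_1^\perp$ and $\sigma$ fixes the complement of $\mathfrak a$ inside it (and matches curvature spheres at two leaves of a conic-parametrized sphere curve), a dimension count forces $\sigma(D_1) = \hat D_1$. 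Finally one must check $\sigma(D_2)=\hat D_2$: this follows because $D_2 = \Delta \ominus D_1$ and $\hat D_2 = \hat\Delta \ominus \hat D_1$ are the Lie-geometric "orthogonal complements" of $D_1,\hat D_1$ in the respective cyclides, and $\sigma$ being a Lie sphere transformation preserves oriented contact, hence sends the second curvature sphere congruence of $\Delta$ to that of $\hat\Delta$. The main obstacle I anticipate is precisely this last bookkeeping step: verifying that the inversion built from the \emph{first} family of curvature data automatically does the right thing to the \emph{second} family — this needs the Ribaucour (curvature-line-preserving) hypothesis in an essential way and is not merely formal, though Corollary~\ref{cor_r_cyclides_dupin} together with~(\ref{equ_R_dupin_perp}) should make it go through.
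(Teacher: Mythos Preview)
The easy direction is fine and essentially matches the paper.

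For the converse there is a genuine gap. Your central claim, that~(\ref{equ_R_dupin_perp}) yields $D_1,\hat D_1\subset C_1^\perp$, is false: $C_1^\perp(u)$ is itself a $(2,1)$-plane, hence $3$-dimensional, and~(\ref{equ_R_dupin_perp}) only places the $2$-dimensional span $\lspann{\mathfrak s^1(u),\partial_u\mathfrak s^1(u)}$ inside it, not all of $D_1$. Indeed, two $(2,1)$-planes contained in a single $(2,1)$-plane would have to coincide, so your statement would force $D_1=\hat D_1$. With this gone, the construction via Fact~\ref{fact_fourspheres_lie} is built on a wrong picture, and ``a dimension count forces $\sigma(D_1)=\hat D_1$'' is not justified: matching two pairs of curvature spheres does not by itself pin down the image of the whole $(2,1)$-plane. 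Ironically, the step you flag as the main obstacle is the trivial one: once $\sigma(D_1)=\hat D_1$ is known, $\sigma(D_2)=\hat D_2$ follows immediately because a Lie inversion is an isometry of $\mathbb R^{4,2}$, so $\sigma(D_1^\perp)=\sigma(D_1)^\perp$.

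The paper's route for the converse is different and avoids this bookkeeping: instead of building $\sigma$ from curvature data, it shows directly that the whole Ribaucour sphere congruence $r(u,v)$ lies in a fixed $5$-dimensional subspace $\mathcal R\subset\mathbb R^{4,2}$. One fixes $(u_0,v_0)$ and uses Corollary~\ref{cor_r_cyclides_dupin} to see that the Ribaucour spheres along the two coordinate curves through $(u_0,v_0)$ lie in two $(2,1)$-planes meeting in $r(u_0,v_0)$, hence in a $5$-space $\mathcal R$. For arbitrary $(\bar u,\bar v)$, the four contact elements of $f$ at $(u_0,v_0),(u_0,\bar v),(\bar u,v_0),(\bar u,\bar v)$ span only a $4$-space $\mathcal S$ (this is where the Dupin hypothesis enters), and likewise a $4$-space $\hat{\mathcal S}$ for $\hat f$; the four Ribaucour spheres therefore lie in $\mathcal S\cap\hat{\mathcal S}$, which is $3$-dimensional, so $r(\bar u,\bar v)$ lies in the span of the other three and hence in $\mathcal R$. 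Then $\mathcal R=\{a\}^\perp$ for some $a$, and $\sigma_a$ fixes every Ribaucour sphere and swaps the two envelopes.
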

\begin{proof}
Let $(u,v) \mapsto f(u,v) \in \mathcal{Z}$ be a Dupin cyclide parametrized along the curvature leaves and $\sigma_a$ a fixed Lie inversion. Since curvature directions are invariant under Lie inversions, the Dupin cyclide $\hat{f}(u,v):=\sigma_a(f(u,v))$ and $f$ form a Ribaucour pair that envelopes the sphere congruence~$f \cap a^\perp$. 
\\\\Conversely, suppose that the Dupin cyclides $\Delta$ and $\hat{\Delta}$ form a Ribaucour pair simultaneously parametrized by curvature line coordinates via $f, \hat{f}:M^2 \rightarrow \mathcal{Z}$. Furthermore, we denote the enveloped Ribaucour sphere congruence by $r(u,v):=f(u,v) \cap \hat{f}(u,v)$. We will show that all Ribaucour spheres lie in a 5-dimensional subspace of $\mathbb{R}^{4,2}$ and, thus, lie in a linear sphere complex. The Lie inversion with respect to this linear sphere complex then interchanges the two envelopes of the Ribaucour sphere congruence and therefore maps the two Dupin cyclides onto each other. 

To do so we fix a point $(u_0, v_0) \in M^2$. Then, by Corollary \ref{cor_r_cyclides_dupin}, the Ribaucour spheres along the two curvature directions passing through $(u_0, v_0)$,
\begin{equation*}
v \mapsto r(u_0,v) \ \ \text{and} \ \  u \mapsto r(u, v_0),
\end{equation*}
are curvature spheres of two constant Ribaucour cyclides that intersect in the sphere $r(u_0,v_0)$. Hence, these Ribaucour spheres lie in a 5-dimensional subspace of $\mathbb{R}^{4,2}$, which we denote by $\mathcal{R}$. 

Let $(\bar{u}, \bar{v}) \in M^2$ be another arbitrary point. Since two contact elements of a Dupin cyclide that lie on the same curvature circle share a common curvature sphere, the four contact elements $f(\bar{u}, \bar{v}), f(u_0, \bar{v}),f(\bar{u}, v_0)$ and $f(u_0,v_0)$ lie in a 4-dimensional subspace $\mathcal{S} \subset \mathbb{R}^{4,2}$. Analogously, the 4-dimensional subspace that contains the spheres of the contact elements $\hat{f}(\bar{u}, \bar{v}), \hat{f}(u_0, \bar{v}), \hat{f}(\bar{u}, v_0)$ and $\hat{f}(u_0,v_0)$ will be denoted by $\hat{\mathcal{S}}$. As a consequence, the four Ribaucour spheres in these contact elements satisfy
\begin{equation*}
r(\bar{u},\bar{v}), r(u_0, \bar{v}), r(\bar{u}, v_0), r(u_0,v_0) \in \mathcal{S} \cap \hat{\mathcal{S}}
\end{equation*}
and therefore lie in a 3-dimensional space. Thus, we conclude that also $r(\bar{u},\bar{v}) \in \mathcal{R}$. Since this holds for any $(\bar{u}, \bar{v}) \in M^2$, the claim is proven.
\end{proof}
\noindent We emphasize that the Lie inversion in Proposition \ref{prop_dupin_rib} is not necessarily an M-Lie inversion. Thus, in general, the Lie inversion maps contact elements of the two Ribaucour transformed Dupin cyclides onto each other, but does not interchange the point sphere maps of them.  
\\\\Proposition \ref{prop_dupin_rib} will be the key to the construction of DC-systems via Lie sphere geometric evolutions. Since the proposed evolution works for any two Ribaucour transformed Legendre maps that are related by a fixed Lie inversion, we will firstly discuss it for general Legendre maps.

Since we now turn again to the construction of orthogonal systems, we fix a point sphere complex~$\mathfrak{p}$ and construct the systems in the M\"obius geometry modelled on $\lspann{\mathfrak{p}}^\perp$. 
\\\\Thus, let $f:M^2 \rightarrow \mathcal{Z}$ be a Legendre map. Then any Lie inversion $\sigma_a$, $a\neq p$, gives rise to a Ribaucour transform $f_a:=\sigma_a(f)$ with enveloped Ribaucour sphere congruence $r:=f \cap a^\perp$. The orthogonal surfaces of the cyclic circle congruence associated to $(f, f_a)$ are obtained by the following Lie geometric evolution: let us consider the set
\begin{equation*}
\mathcal{C}_a := \{ \lspann{\mathfrak{b}} \subset \mathbb{R}^{4,2} \ | \ \mathfrak{b} \in \lspann{\mathfrak{a}, \mathfrak{p}} \ \text{and } \lspan{\mathfrak{b}, \mathfrak{b}} \neq 0 \}.
\end{equation*}
Then $\mathcal{C}_a \ni b \mapsto \sigma_{b}(f)$ yields a 1-parameter family of Legendre maps that are Ribaucour transforms of $f$. Moreover, those satisfy the sought-after property:
\begin{prop}\label{prop_orth}
Let $f$ and $\hat{f}:=\sigma_a(f)$ be a Ribaucour pair related by the Lie inversion $\sigma_a$. Then $\mathcal{C}_a \ni b \mapsto \sigma_{b}(f)$ yields the 1-parameter family of Legendre maps that are orthogonal to the cyclic circle congruence associated to $f$ and $\hat{f}$.
\end{prop}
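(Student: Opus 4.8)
The plan is to verify the two defining properties of the orthogonal surfaces of the cyclic circle congruence: that each map $\sigma_b(f)$, $b \in \mathcal{C}_a$, is indeed orthogonal to every circle of the congruence, and that these exhaust the $1$-parameter family. First I would recall that the cyclic circle congruence associated to $(f,\hat f)$ consists, at each point $(u,v)$, of the circle $\Gamma(u,v)$ through the two point spheres $f(u,v)\cap \PL$ and $\hat f(u,v)\cap \PL$ that meets the Ribaucour sphere $r(u,v) = f(u,v)\cap a^\perp$ orthogonally; by (\ref{equ_orth_circle}) its plane is $\gamma(u,v) = \lspann{\mathfrak m, \hat{\mathfrak m}, \mathfrak r + \lspan{\mathfrak r,\mathfrak p}\mathfrak p}$. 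The key structural observation is that $\hat f = \sigma_a(f)$, so $\hat{\mathfrak m}$ is (up to scale) $\sigma_a(\mathfrak m)$ together with a possible orientation-reversal by $\sigma_{\mathfrak p}$; in any case $\mathfrak m$, $\hat{\mathfrak m}$ and $r$ all lie in the span $\lspann{\mathfrak m, \mathfrak a, \mathfrak p}$, since $\sigma_a(\mathfrak m) = \mathfrak m - \tfrac{2\lspan{\mathfrak m,\mathfrak a}}{\lspan{\mathfrak a,\mathfrak a}}\mathfrak a$ and $r$ is the element of the contact element $f(u,v)$ lying in $a^\perp$, which is a linear combination of $\mathfrak m$ and the other spheres of $f$ — but because $f$ is a contact element containing $\mathfrak m$, this needs a short argument that $r \in \lspann{\mathfrak m, \mathfrak a, \mathfrak p}$ as well. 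Granting this, $\gamma(u,v) \subset \lspann{\mathfrak m, \mathfrak a, \mathfrak p}$.

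Next I would use Lemma~\ref{lem_orth_contact_el}: the spheres orthogonal to $\Gamma(u,v)$ form the $4$-dimensional subspace $\mathcal S(u,v)$ spanned by $\mathfrak r$, the two point-sphere-free spheres of $f(u,v)$ and $\hat f(u,v)$, and $\mathfrak p$. For a fixed $b = \lspann{\mathfrak b}$ with $\mathfrak b \in \lspann{\mathfrak a, \mathfrak p}$, the Legendre map $\sigma_b(f)$ has at $(u,v)$ the contact element $\sigma_b(f(u,v))$. Its point sphere is $\sigma_b(\mathfrak m)$, which lies in $\lspann{\mathfrak m, \mathfrak b, \mathfrak p} \subseteq \lspann{\mathfrak m, \mathfrak a, \mathfrak p}$. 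To show $\sigma_b(f)$ is orthogonal to $\Gamma(u,v)$ it suffices (by the remark preceding Fact~\ref{fact_circle_two_all}, and by Fact~\ref{fact_circle_two_all} itself) to check that the point sphere $\sigma_b(\mathfrak m)$ lies on $\Gamma$'s circle and that one further sphere of the contact element $\sigma_b(f(u,v))$ meets $\Gamma$ orthogonally. The first holds because $\sigma_b(\mathfrak m)$ is a point sphere in $\gamma(u,v) \oplus \PL$-sense — more precisely one checks $\sigma_b(\mathfrak m) \in \gamma(u,v)$ by using that $\mathfrak m \in \gamma$, $\mathfrak b \in \lspann{\mathfrak a,\mathfrak p}$ and $\gamma \supseteq \lspann{\mathfrak r + \lspan{\mathfrak r,\mathfrak p}\mathfrak p}$ with $r$ the axis of the M-sphere-pencil-type situation. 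The second, orthogonality of the Ribaucour sphere $\sigma_b(r) = r$ (since $r \in a^\perp$ and, when $\mathfrak b \in \lspann{\mathfrak a,\mathfrak p}$, also $r \in b^\perp$ up to the $\mathfrak p$-component — this is where one uses $\lspan{r,\mathfrak p}$ being handled correctly), reduces to $r$ being orthogonal to $\Gamma$, which is its defining property.

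Then I would argue exhaustiveness: the circle congruence $\Gamma$ is cyclic (Theorem~\ref{thm_dc_via_cyclic} in the Dupin case, or the general cyclic-system theory recalled in Subsection~\ref{subsect_dc_cyclic}), so it admits precisely a $1$-parameter family of orthogonal surfaces. The family $\mathcal C_a \ni b \mapsto \sigma_b(f)$ is genuinely $1$-parameter: $\mathcal C_a$ is (an open subset of) the projective line $\lspann{\mathfrak a, \mathfrak p}$, and the maps $\sigma_b(f)$ are pairwise distinct because $\sigma_{b_1}\circ\sigma_{b_2}$ is a non-trivial Lie sphere transformation when $b_1 \neq b_2$ in this pencil and $f$ is not contained in the corresponding linear sphere complex (which is the non-degeneracy hypothesis implicit in $f$ having an honest Ribaucour transform). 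Since a $1$-parameter family inside a $1$-parameter family is everything, the two families coincide; in particular $f = \sigma_p(f)$-orientation aside corresponds to the endpoint $b$ near $\mathfrak p$ and $\hat f = \sigma_a(f)$ to $b = a$, confirming both envelopes appear.

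The main obstacle I anticipate is the careful bookkeeping of the point sphere complex $\mathfrak p$ throughout — specifically, since $\sigma_a$ need not be an M-Lie inversion (as the paper explicitly warns after Proposition~\ref{prop_dupin_rib}), the point sphere map of $\hat f$ is not simply the $\sigma_a$-image of the point sphere map of $f$, and one must pass through the $\PL$-projection, i.e. replace a sphere $\mathfrak s$ by $\mathfrak s + \lspan{\mathfrak s,\mathfrak p}\mathfrak p$ at the right moments. Getting the circle $\gamma(u,v)$ correctly identified as a $(2,1)$-plane inside $\lspann{\mathfrak m, \mathfrak a, \mathfrak p}$, and verifying that every $\sigma_b(f)$ with $\mathfrak b \in \lspann{\mathfrak a,\mathfrak p}$ stays orthogonal to it, is the technical heart; the exhaustiveness half is then essentially a dimension count once the cyclic-system structure theorem is invoked.
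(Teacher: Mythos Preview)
Your strategy---show each $\sigma_b(f)$ is orthogonal to the cyclic circle congruence, then count dimensions---matches the paper's, but the execution contains genuine errors that you yourself flag as the ``main obstacle'' and then do not resolve.

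The central mistake is the claim that $\mathfrak r \in \lspann{\mathfrak m,\mathfrak a,\mathfrak p}$. The Ribaucour sphere $r$ lies in the contact element $f(u,v)=\lspann{\mathfrak m,\mathfrak s}$ for some second sphere $\mathfrak s$, and generically $\mathfrak s \notin \lspann{\mathfrak m,\mathfrak a,\mathfrak p}$; hence neither is $\mathfrak r$, and the $(2,1)$-plane $\gamma$ does \emph{not} sit inside the $3$-space $\lspann{\mathfrak m,\mathfrak a,\mathfrak p}$. Two further errors cascade from this: first, $\sigma_b(\mathfrak m)$ is \emph{not} the point sphere of $\sigma_b(f)$, since $\sigma_b$ is not an M-Lie inversion when $\mathfrak b$ has a nonzero $\mathfrak p$-component (you compute $\lspan{\sigma_b(\mathfrak m),\mathfrak p}\neq 0$ directly); second, $\sigma_b(r)\neq r$ for the same reason, because $\lspan{\mathfrak r,\mathfrak b}=\mu\lspan{\mathfrak r,\mathfrak p}\neq 0$ whenever $\mathfrak b=\lambda\mathfrak a+\mu\mathfrak p$ with $\mu\neq 0$.

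The paper sidesteps all of this by working one dimension up. It observes that every sphere of every contact element $\sigma_b(f(u,v))$, for $\mathfrak b\in\lspann{\mathfrak a,\mathfrak p}$, lies in the $4$-dimensional space $\mathcal S=\lspann{\mathfrak f^p,\mathfrak r,\mathfrak a,\mathfrak p}$; this is immediate from the formula for $\sigma_b$ and requires no identification of point spheres. Then, rather than locating point spheres on $\gamma$, the paper shows that for any $v\in\gamma^\perp$ the vector $\mathfrak v+\lspan{\mathfrak v,\mathfrak p}\mathfrak p$ is perpendicular to all of $\mathcal S$ (the key step being $\hat f\subset\lspann{\mathfrak f^p,\mathfrak r,\mathfrak a}$, which forces $\mathfrak v+\lspan{\mathfrak v,\mathfrak p}\mathfrak p\perp\mathfrak a$). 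This is exactly the content of Lemma~\ref{lem_orth_contact_el} applied to the pair $f,\hat f$, and it delivers orthogonality of every sphere of $\sigma_b(f)$ to $\Gamma$ in one stroke. Your exhaustiveness argument by dimension count is fine and in fact supplements the paper, which leaves that direction implicit.
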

\begin{proof}
Let $f: M^2 \rightarrow \mathcal{Z}$ be a Legendre map and $\sigma_a$, $a \neq p$, a fixed Lie inversion. Note that $f=\lspann{f^p,r}$, where $f^p$ denotes the point sphere map of $f$ and $r:=f \cap a^\perp$. Then, by (\ref{equ_orth_circle}), the circle congruence $\Gamma=(\gamma, \gamma^\perp): M^2 \rightarrow G_{2,1} \times G_{2,1}$ associated to the Ribaucour pair $f$ and $\hat{f}=\sigma_a(f)$ is described by the $(2,1)$-planes
\begin{equation*}
\gamma=\lspann{ \mathfrak{f}^p, \hat{\mathfrak{f}}^p, \mathfrak{r}+ \lspan{\mathfrak{r}, \mathfrak{p}}\mathfrak{p}},
\end{equation*}
where $\hat{\mathfrak{f}}^p$ denotes the point sphere map of $\hat{f}$. Since the spheres in the contact elements $f$ and $\hat{f}$ are orthogonal to the corresponding circle, we know that any sphere $v \in \gamma^\perp$ satisfies $\mathfrak{v}+\lspan{\mathfrak{v}, \mathfrak{p}}\mathfrak{p} \perp \mathfrak{f}, \hat{\mathfrak{f}}$. Hence, because $\hat{f} \in \lspann{\mathfrak{f}^p, \mathfrak{r}, \mathfrak{a}}$, we conclude that $\mathfrak{v}+\lspan{\mathfrak{v}, \mathfrak{p}}\mathfrak{p} \perp \mathfrak{a}$.

Considering the family $\mathcal{C}_a \ni b \mapsto \sigma_{b}(f)$ of Legendre maps, we note that the spheres in the contact elements of these Legendre maps lie in the subspaces $\mathcal{S}:=\lspann{\mathfrak{f}^p, \mathfrak{r}, \mathfrak{a}, \mathfrak{p}}$.  Thus, by above, we have that any sphere $v \in \gamma^\perp$ fulfills $\mathfrak{v}+\lspan{\mathfrak{v}, \mathfrak{p}}\mathfrak{p} \in \mathcal{S}^\perp$ and the claim is proven. 
\end{proof}
\noindent If we restrict this construction and choose Dupin cyclides as initial Legendre maps, then the proposed evolution provides DC-systems: 
\begin{thm} \label{thm_dc_via_evolution}
Let $\Delta=D_1 \oplus_\perp D_2$ be a Dupin cyclide and $\sigma_a$ a Lie inversion, $a \neq p$. Then the 1-parameter family $\mathcal{C}_a \ni b \mapsto \sigma_b(\Delta)$ of Dupin cyclides provides a Lam\'e family of a DC-system. If there exists a sphere in $\lspann{\mathfrak{a}, \mathfrak{p}}$, then this sphere as totally umbilic Legendre map is also part of the Lam\'e family.

Moreover, any Lam\'e family of Dupin cyclides that provides a DC-system is  obtained in this way.
\end{thm}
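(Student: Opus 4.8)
The plan is to establish the theorem in three stages. First, the construction itself: given $\Delta = D_1 \oplus_\perp D_2$ and a Lie inversion $\sigma_a$ with $a \neq p$, one applies Proposition~\ref{prop_orth} to $f = \Delta$ and $\hat f = \sigma_a(\Delta)$, which is a Ribaucour pair of Dupin cyclides by Proposition~\ref{prop_dupin_rib}. Proposition~\ref{prop_orth} tells us that $\mathcal{C}_a \ni b \mapsto \sigma_b(\Delta)$ is exactly the 1-parameter family orthogonal to the cyclic circle congruence associated to the Ribaucour pair. Each $\sigma_b(\Delta)$ is a Dupin cyclide, since Lie inversions preserve the class of Dupin cyclides. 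By Theorem~\ref{thm_dc_via_cyclic}, the cyclic system associated to a Ribaucour pair of Dupin cyclides is a DC-system, so $\mathcal{C}_a \ni b \mapsto \sigma_b(\Delta)$ is a Lam\'e family of a DC-system. This handles the first assertion.

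Second, the totally umbilic surface. If $\lspann{\mathfrak{a}, \mathfrak{p}}$ contains a vector $\mathfrak{b}_0$ with $\lspan{\mathfrak{b}_0, \mathfrak{b}_0} = 0$, this is a sphere $b_0 \in \LL$; one must check that $b_0$, viewed as a (degenerate) Legendre map via the parabolic sphere complex it determines, is orthogonal to the same circle congruence. Concretely, I would reuse the computation in the proof of Proposition~\ref{prop_orth}: there we showed that for any $v \in \gamma^\perp$ one has $\mathfrak{v} + \lspan{\mathfrak{v}, \mathfrak{p}}\mathfrak{p} \in \mathcal{S}^\perp$ where $\mathcal{S} = \lspann{\mathfrak{f}^p, \mathfrak{r}, \mathfrak{a}, \mathfrak{p}}$. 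Since $\mathfrak{b}_0 \in \lspann{\mathfrak{a}, \mathfrak{p}} \subset \mathcal{S}$, every circle of the congruence meets $b_0$ orthogonally, i.e.\ the point spheres of $b_0$ where the circles hit it are the contact points; hence $b_0$ is the totally umbilic coordinate surface joining the Lam\'e family. (This is consistent with the parallel-surface picture: in the hyperbolic space-form case the absolute quadric is a fixed point of all the inversions $\sigma_b$.)

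Third — and this is the main obstacle — the converse: \emph{every} Lam\'e family of Dupin cyclides in a DC-system arises this way. Let $(\Delta_t)$ be such a Lam\'e family. Any two members $\Delta_{t_0}, \Delta_{t_1}$ are coordinate surfaces of the DC-system, and since all orthogonal trajectories of a DC-system are circular (these circles being the third coordinate direction restricted to the slab between them) they form a Ribaucour pair; by Proposition~\ref{prop_dupin_rib} there is a Lie inversion $\sigma_{a(t_0,t_1)}$ carrying $\Delta_{t_0}$ to $\Delta_{t_1}$. The crux is to show these Lie inversions are \emph{compatible}, i.e.\ that one fixed 2-dimensional subspace $\lspann{\mathfrak{a}, \mathfrak{p}}$ governs the whole family. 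Fixing $\Delta_0 = \Delta_{t_0}$ as base, I would argue that the Ribaucour sphere congruence between $\Delta_0$ and $\Delta_t$ is, by the argument inside the proof of Proposition~\ref{prop_dupin_rib}, contained in the $5$-dimensional complex $\{a(t)\}^\perp$; the orthogonal circle congruence of the DC-system is independent of $t$ (it is the fixed third family of coordinate trajectories), and by Proposition~\ref{prop_orth} this forces $\mathfrak{a}(t) \in \mathcal{S}^\perp$-orthogonal-complement data that depends only on the congruence — more precisely, all the Ribaucour sphere congruences between $\Delta_0$ and the various $\Delta_t$ share the same orthogonal circle congruence $\Gamma$, and by Lemma~\ref{lem_orth_contact_el} the spheres orthogonal to a fixed circle of $\Gamma$ fill a fixed $4$-space $\mathcal{S}$; tracking how $\mathfrak{a}(t)$ sits relative to $\mathfrak{f}^p$, $\mathfrak{r}_0$ and $\mathfrak{p}$ shows $\mathfrak{a}(t) \in \lspann{\mathfrak{a}(t_1), \mathfrak{p}}$ for all $t$. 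Once the 2-space $\lspann{\mathfrak{a}, \mathfrak{p}}$ is seen to be $t$-independent, reparametrizing $t \mapsto b(t) \in \mathcal{C}_a$ via $\sigma_{b(t)}(\Delta_0) = \Delta_t$ identifies the family with $\mathcal{C}_a \ni b \mapsto \sigma_b(\Delta_0)$, completing the proof. The delicate point throughout is the bookkeeping that the auxiliary point sphere map $\mathfrak{f}^p$ and Ribaucour congruence $\mathfrak{r}$ entering Proposition~\ref{prop_orth} can be chosen consistently along the whole Lam\'e family; I expect this to require a short continuity/connectedness argument in addition to the linear-algebraic dimension counts.
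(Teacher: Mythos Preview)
Your first two stages match the paper's argument closely: the forward direction goes through Proposition~\ref{prop_orth} and the fact that all resulting orthogonal surfaces are Dupin cyclides (the paper argues circularity of the complementary coordinate surfaces directly rather than citing Theorem~\ref{thm_dc_via_cyclic}, but this is immaterial), and the paper does not spell out the totally umbilic case at all in the proof, so your treatment there is if anything more complete.

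For the converse, however, you are working much harder than necessary. You propose to track the vectors $\mathfrak{a}(t)$ relating $\Delta_0$ to each $\Delta_t$ and prove by dimension counts and Lemma~\ref{lem_orth_contact_el} that they all lie in a common $2$-plane $\lspann{\mathfrak{a},\mathfrak{p}}$; you yourself flag the bookkeeping as delicate. The paper bypasses this entirely. The point is that the cyclic circle congruence of the DC-system is fixed once and for all (it is the family of orthogonal trajectories), and the orthogonal surfaces of a given cyclic circle congruence form a \emph{unique} 1-parameter family. So: pick any two Dupin cyclides $\Delta,\hat\Delta$ in the Lam\'e family; as orthogonal surfaces of the cyclic congruence they are a Ribaucour pair, hence by Proposition~\ref{prop_dupin_rib} related by some $\sigma_a$; by Proposition~\ref{prop_orth} the family $\mathcal{C}_a \ni b \mapsto \sigma_b(\Delta)$ already yields \emph{the} 1-parameter family of orthogonal surfaces of that same congruence --- which is precisely the original Lam\'e family. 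No compatibility argument among different $\mathfrak{a}(t)$ is needed, because uniqueness of the orthogonal family does that work for you in one stroke.
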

\begin{proof}
From Proposition \ref{prop_orth}, it follows that $\mathcal{C}_b \ni b \mapsto \sigma_b(\Delta)$ yields a family of Dupin cyclides that are orthogonal surfaces to the circle congruence associated to the Ribaucour pair $\Delta$ and $\hat{\Delta}:=\sigma_a(\Delta)$. Since therefore all curvature lines of the orthogonal surfaces are circular, the curvature lines of the coordinate surfaces that complete the Lam\'e family to a triply orthogonal system are also circular. Thus, this construction indeed yields a DC-system.

Furthermore, according to Proposition \ref{prop_dupin_rib}, Theorem \ref{thm_dc_via_cyclic} and the well-known fact that any two orthogonal surfaces of a cyclic circle congruence form a Ribaucour pair, any DC-system can be obtained by this construction.
\end{proof}
%%%%%%%%%%%%%%%%%%%%%%%%%%%%%%%%%%%%%%%%
%
%
%%%%%%%%%%%%%%%%%%%%%%%%%%%%%%%%%%%%
\subsection{DC-systems in space forms}\label{subsect_interpretation}
In the previous subsections we have constructed DC-systems in a M\"obius geometry modelled on $\lspann{\mathfrak{p}}^\perp$. To gain further insights into their geometry, in this subsection we will further break symmetry and consider the DC-systems in specific space forms. Depending on the point of view, we give two geometric interpretations of the obtained Lie geometric evolution: firstly, we will discuss that any Lam\'e family of a DC-system yields parallel surfaces in a suitably chosen space form. Secondly, to make contact with classical results, we will fix a flat space form (Euclidean 3-space) and will characterize various types of Lam\'e families in DC-systems.
\\\\Recall that any choice of space form vector $\mathfrak{q} \in \mathbb{R}^{4,2} \setminus \{ 0\}$, $\mathfrak{q} \perp \mathfrak{p}$, distinguishes a quadric 
\begin{equation*}
\mathcal{Q}:=\{ \mathfrak{n}\in \mathbb{R}^{4,2} \ | \ \lspan{\mathfrak{n}, \mathfrak{n}}=0, \ \lspan{\mathfrak{n}, \mathfrak{q}}=-1, \ \lspan{\mathfrak{n}, \mathfrak{p}}=0 \}
\end{equation*}
of constant sectional curvature $-\lspan{\mathfrak{q},\mathfrak{q}}$ and obtain its complex of hyperplanes
\begin{equation*}
\mathcal{H}:= \{\mathfrak{n}\in \mathbb{R}^{4,2} \ | \ \lspan{\mathfrak{n}, \mathfrak{n}}=0, \ \lspan{\mathfrak{n}, \mathfrak{p}}=-1, \ \lspan{\mathfrak{n}, \mathfrak{q}}=0   \}.
\end{equation*}
In Theorem \ref{thm_dc_via_evolution} we have proven that any Lam\'e family of a DC-system can be generated by a pair of initial data, namely a Dupin cyclide and a Lie inversion $\sigma_a$. Evolving the Dupin cyclide via Lie inversions with respect to vectors in $\mathcal{C}_a \subset \mathcal{C}:=\lspann{ \mathfrak{a}, \mathfrak{p}}$, then yields a 1-parameter family of Dupin cyclides that are part of a DC-system.   

Suppose that $\{\mathfrak{e}_1, \mathfrak{p} \}$ provides an orthonormal basis for the 2-dimensional subspace $\mathcal{C}=\lspann{\mathfrak{a}, \mathfrak{p}}$. If we now choose the space form vector $\mathfrak{q}:=\mathfrak{e}_1$, then the Lie geometric evolution yields a 1-parameter family of parallel Legendre maps in this specific space form. Otherwise said, the circles of the associated cyclic circle congruence become lines in the chosen space form $\mathcal{Q}$, that is, the space form vector $\mathfrak{q}$ lies in the $(2,1)$-plane that contains the circle points.

Hence, we have proven:
\begin{prop}\label{prop_parallel_spaceform}
Any Lam\'e family of a DC-system consists of parallel surfaces in an appropriate space form; thus, the Dupin cyclides are orthogonal to a normal line congruence in this space form. 
\end{prop}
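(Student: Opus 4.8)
The plan is to exploit Theorem~\ref{thm_dc_via_evolution} together with the construction of a space form from a pair of orthogonal vectors $\mathfrak{q}\perp\mathfrak{p}$. By Theorem~\ref{thm_dc_via_evolution}, any Lam\'e family of Dupin cyclides in a DC-system is of the form $\mathcal{C}_a\ni b\mapsto\sigma_b(\Delta)$ for a fixed Dupin cyclide $\Delta$ and a fixed Lie inversion $\sigma_a$, $a\neq p$, where the Lie inversions range over linear sphere complexes represented by vectors in the $2$-dimensional subspace $\mathcal{C}=\lspann{\mathfrak{a},\mathfrak{p}}$. The first step is thus to choose coordinates adapted to this $2$-plane: pick an orthonormal basis $\{\mathfrak{e}_1,\mathfrak{p}\}$ of $\mathcal{C}$ (this is possible since $\lspan{\mathfrak{p},\mathfrak{p}}=-1$ and $\mathcal{C}$ contains the timelike vector $\mathfrak{p}$, so the orthogonal complement of $\mathfrak{p}$ inside $\mathcal{C}$ is spanned by a spacelike unit vector $\mathfrak{e}_1$), and set the space form vector $\mathfrak{q}:=\mathfrak{e}_1$. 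This produces a quadric $\mathcal{Q}$ of constant curvature $-\lspan{\mathfrak{q},\mathfrak{q}}=-1$.

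The second step is to identify the orthogonal trajectories of the cyclic circle congruence associated to $(\Delta,\sigma_a(\Delta))$ as geodesics of $\mathcal{Q}$. Recall from Proposition~\ref{prop_orth} and the discussion preceding it that these trajectories are the circles $\Gamma=(\gamma,\gamma^\perp)$ with $\gamma=\lspann{\mathfrak{f}^p,\hat{\mathfrak{f}}^p,\mathfrak{r}+\lspan{\mathfrak{r},\mathfrak{p}}\mathfrak{p}}$, and that all spheres in the contact elements along a trajectory lie in $\mathcal{S}=\lspann{\mathfrak{f}^p,\mathfrak{r},\mathfrak{a},\mathfrak{p}}$, whence $\gamma\subset\mathcal{S}$. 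The key point is that $\mathfrak{q}=\mathfrak{e}_1\in\mathcal{C}\subset\mathcal{S}$, and in fact $\mathfrak{q}\in\gamma$: since $\mathfrak{q}\perp\mathfrak{p}$ and $\mathfrak{q}\in\lspann{\mathfrak{a},\mathfrak{p}}$, writing $\mathfrak{q}$ in terms of $\mathfrak{a}$ and $\mathfrak{p}$ and using that $\mathfrak{r}+\lspan{\mathfrak{r},\mathfrak{p}}\mathfrak{p}$ together with $\mathfrak{a}$ differ only by a vector in $\lspann{\mathfrak{p}}$ and by $\mathfrak{f}^p,\hat{\mathfrak{f}}^p$ up to the coefficients from Fact~\ref{fact_minv_fix}, one sees $\mathfrak{q}$ lies in the $(2,1)$-plane $\gamma$. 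A circle whose $(2,1)$-plane $\gamma$ contains the space form vector $\mathfrak{q}$ is precisely a geodesic line of the space form $\mathcal{Q}$ (its intersection with $\mathcal{Q}^\perp\cap\{\text{point spheres}\}$ is a great circle through the absolute, i.e.\ a geodesic). Hence every orthogonal trajectory of the congruence is a geodesic of $\mathcal{Q}$.

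The third step is then immediate: a $1$-parameter family of surfaces that is orthogonal to a congruence of geodesic lines in a space form is, by definition, a family of parallel surfaces (the geodesics being the common normal lines), and the distance between any two members along these normals is constant. This gives exactly the assertion of Proposition~\ref{prop_parallel_spaceform}, including the final clause that the Dupin cyclides are orthogonal to a normal line congruence. I expect the main technical obstacle to be the bookkeeping in the second step: verifying carefully that the chosen $\mathfrak{q}$ genuinely lies in the $(2,1)$-plane $\gamma$ of every trajectory (and not merely in the ambient $\mathcal{S}$), which requires tracking how $\mathfrak{a}$, the Ribaucour sphere $\mathfrak{r}$, and the point sphere maps $\mathfrak{f}^p,\hat{\mathfrak{f}}^p$ interrelate via Fact~\ref{fact_minv_fix} and the splitting $f=\lspann{f^p,r}$. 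Once that containment is established, the geometric interpretation as parallel surfaces is essentially a restatement of the definition of a space form and of geodesics therein.
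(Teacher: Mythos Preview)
Your overall strategy coincides with the paper's: choose $\mathfrak{q}:=\mathfrak{e}_1\in\mathcal{C}=\lspann{\mathfrak{a},\mathfrak{p}}$ with $\mathfrak{e}_1\perp\mathfrak{p}$, observe that $\mathfrak{q}$ lies in each $(2,1)$-plane $\gamma$ of the associated cyclic circle congruence, and conclude that those circles are lines in the space form $\mathcal{Q}$ determined by $\mathfrak{q}$, whence the Lam\'e family consists of parallel surfaces. The paper does exactly this in one short paragraph preceding the proposition.

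There is, however, a genuine error in your first step. You assert that ``the orthogonal complement of $\mathfrak{p}$ inside $\mathcal{C}$ is spanned by a spacelike unit vector $\mathfrak{e}_1$'' and hence that the resulting space form has sectional curvature $-1$. This is false: the ambient signature is $(4,2)$, so $\lspann{\mathfrak{p}}^\perp$ still carries a direction of negative type, and the vector $\mathfrak{e}_1$ orthogonal to $\mathfrak{p}$ in $\mathcal{C}$ may be spacelike, lightlike, or timelike. The paper explicitly uses this trichotomy immediately after the proposition to distinguish Types~1--3 of Lam\'e families (confocal families when $\lspan{\mathfrak{e}_1,\mathfrak{e}_1}=0$, families containing a totally umbilic leaf when $\lspan{\mathfrak{e}_1,\mathfrak{e}_1}>0$, etc.). Your argument should therefore not normalize $\mathfrak{e}_1$ and should leave the curvature $-\lspan{\mathfrak{q},\mathfrak{q}}$ undetermined; the remainder of the proof is insensitive to this sign.

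Regarding your second step, the containment $\mathfrak{q}\in\gamma$ can be obtained more cleanly than your sketch suggests: since $\gamma\subset\mathcal{S}=\lspann{\mathfrak{f}^p,\mathfrak{r},\mathfrak{a},\mathfrak{p}}$ and every generator of $\gamma$ is orthogonal to $\mathfrak{p}$, one has $\gamma\subset\mathcal{S}\cap\mathfrak{p}^\perp$; both sides are $3$-dimensional (as $\mathfrak{p}\in\mathcal{S}$), so $\gamma=\mathcal{S}\cap\mathfrak{p}^\perp$, and $\mathfrak{e}_1\in\lspann{\mathfrak{a},\mathfrak{p}}\cap\mathfrak{p}^\perp\subset\gamma$ follows immediately. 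The paper asserts this containment without further detail.
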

%%%%%%%%%%%%%%%%%%%%%%%%%%%%%%%%%%%%%%%%%%%%%%%%%%%%%%%%
%
%%%%%%%%%%%%%%%%%%%%%%%%%%%%%%%%%%%%%%%%%%%%%%%%%%%%%%%%
\begin{figure}
\hspace*{0.1cm}\includegraphics[scale=0.18]{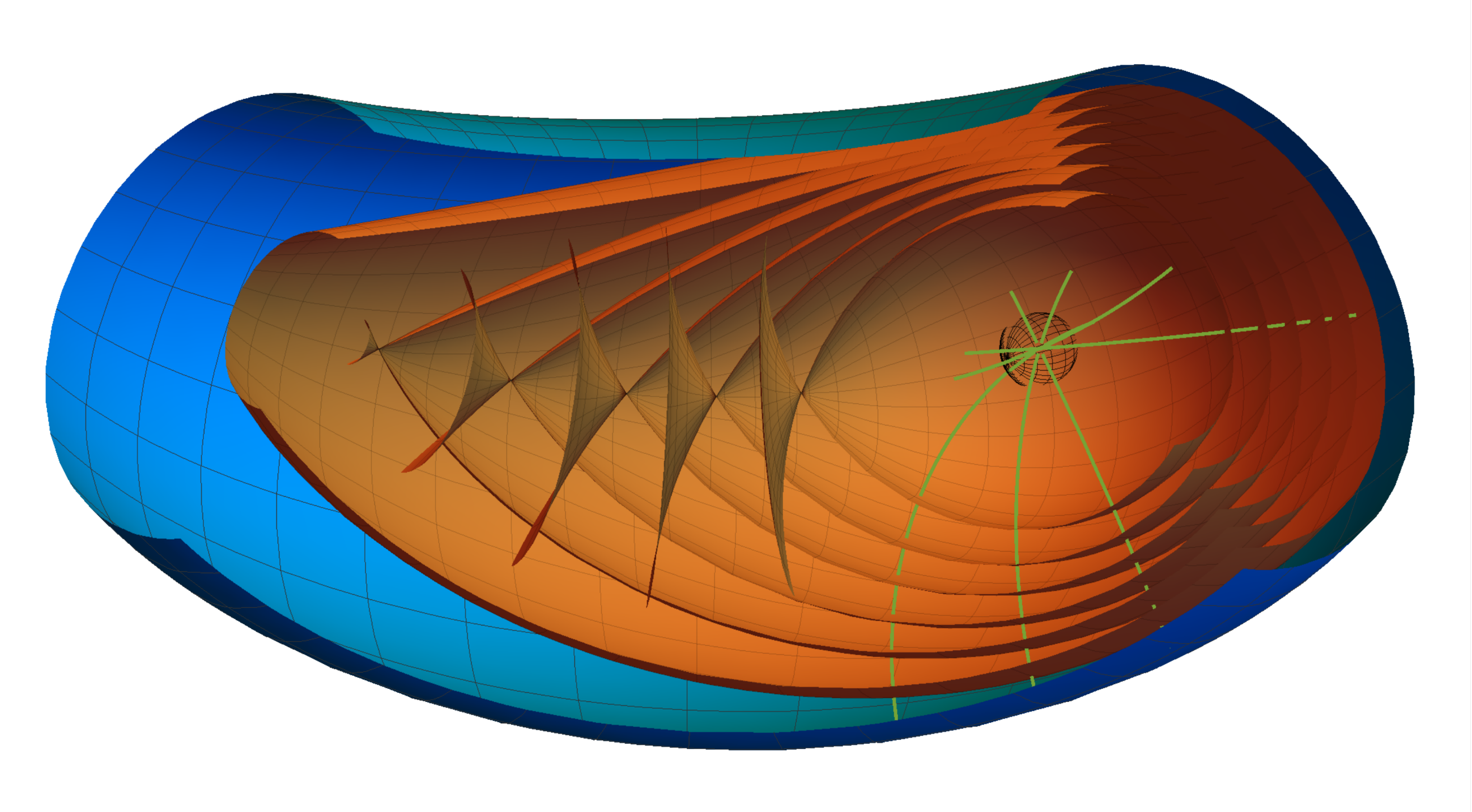}
\hspace*{0.1cm}\includegraphics[scale=0.38]{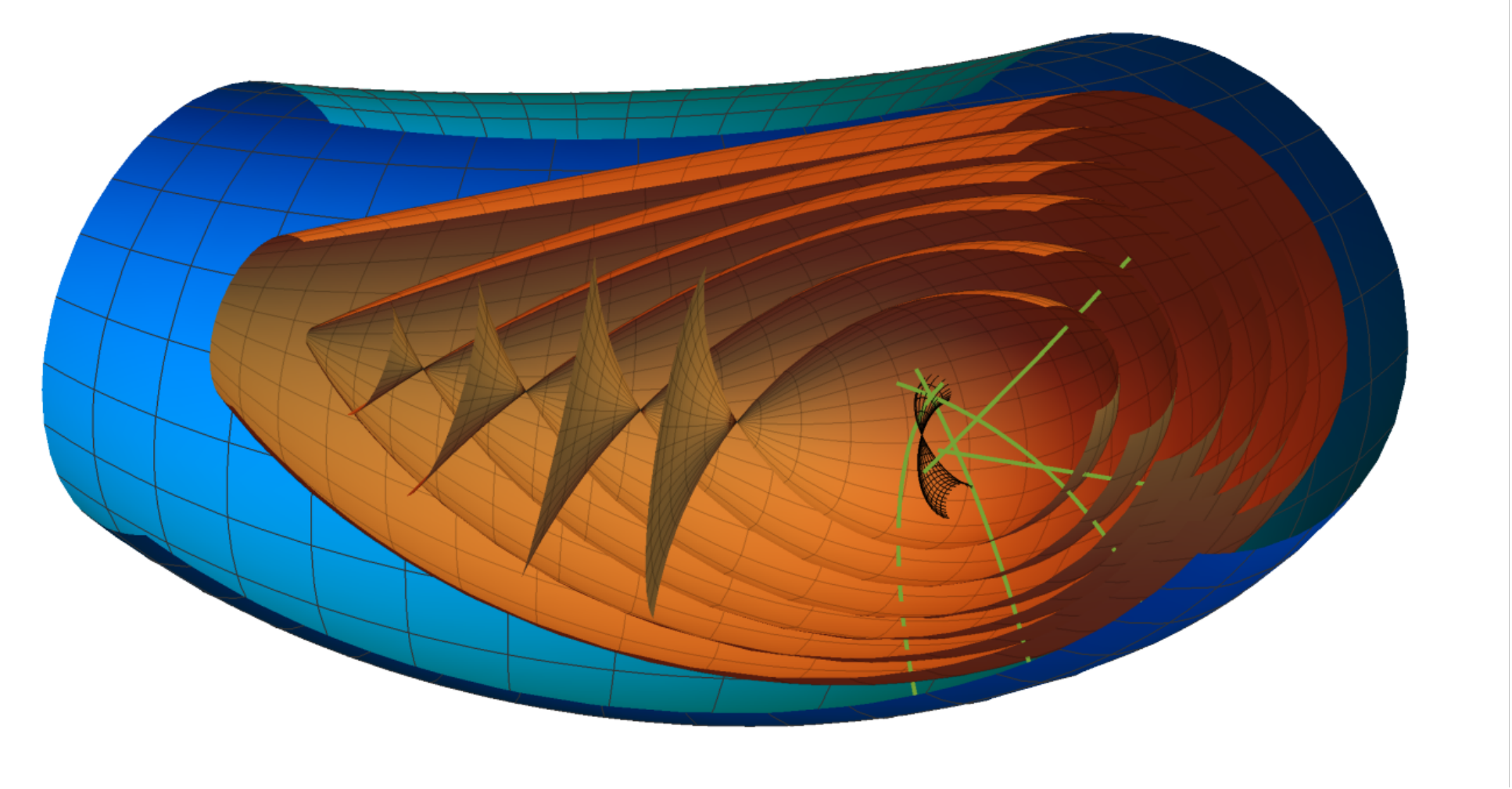}
\caption{These two Lam\'e families of DC-systems are generated by suitably evolving an initial Dupin cyclidic patch (blue). Depending on the chosen evolution, the orthogonal trajectories (green) intersect in one point (Type 1, \emph{left}) or do not intersect (Type 3, \emph{right}).
}
\label{fig_types}
\end{figure} 
%%%%%%%%%%%%%%%%%%%%%%%%%%%%%%%%%%%%%%%%%%%%%%%%%%%%%%%
%
%%%%%%%%%%%%%%%%%%%%%%%%%%%%%%%%%%%%%%%%%%%%%%%%%%%%%%%%
\ \\As a consequence Lam\'e families of DC-systems split into three types. To link these types to well-known DC-systems, we now change our point of view, fix a flat space form and analyse them in Euclidean space. To do so, we choose a space form vector $\mathfrak{q} \in \mathcal{L}$, $\mathfrak{q}\perp \mathfrak{p}$.

Depending on the signature of the space $\mathcal{C}=\lspann{\mathfrak{e}_1, \mathfrak{p}}$, we distinguish the following three types (for examples see Figures \ref{fig_types} and \ref{fig_types_ell}):
\\\\\framebox{Type 1: $\lspan{\mathfrak{e}_1, \mathfrak{e}_1}=0$}
\\\\\textbf{Confocal Dupin cyclides, CFDs \cite{Honglawan34, Lavrentovich5, Schief2005OnAN}.} If $e_1=q$, then the Lam\'e family $\mathcal{C}_a \ni b \mapsto \sigma_b(\Delta)$ yields parallel Dupin cyclides  in the distinguished Euclidean space form. Otherwise said, we obtain a family of confocal Dupin cyclides. 

The two complementary Lam\'e families of these DC-systems are compound of circular cones built from the Euclidean normals along each circular curvature line of the confocal Dupin cyclides.
\ \\\\\textbf{All orthogonal trajectories have one point in common.} Otherwise, if $q\neq e_1$, then $e_1$ represents a point sphere that lies on all circles of the associated circle congruence. Thus, in this case all those circles intersect in one point.
\\\\\framebox{Type 2: $\lspan{\mathfrak{e}_1, \mathfrak{e}_1}>0$} 
\\\\\textbf{Lam\'e families of Dupin cyclides that contain a totally umbilic coordinate surface.} Since the spacelike vector $\mathfrak{e}_1$ satisfies $\lspan{\mathfrak{e}_1, \mathfrak{p}}=0$, the map $\sigma_{e_1}$ is a proper M-Lie inversion; hence, describes a reflection in the M-sphere $s^\pm \in \lspann{\mathfrak{e}_1, \mathfrak{p}} \cap \LL$. Thus, in the Lam\'e family $\mathcal{C}_a \ni b \mapsto \sigma_b(\Delta)$ two Dupin cyclides are related by this M\"obius inversion.
\\\\Note that for this type of Lam\'e family the evolution map has two singularities at the values
\begin{equation*}
 b^\pm := \mathfrak{e}_1 \pm\sqrt{\lspan{\mathfrak{e}_1, \mathfrak{e}_1}} \mathfrak{p} \in \lspann{\mathfrak{e}_1, \mathfrak{p}},
\end{equation*}
where the vectors $b^\pm$ become lightlike. However, the represented spheres $b^\pm$ that coincide up to orientation are orthogonal to the circle congruence. Therefore, seen as totally umbilic surfaces, those spheres complete the constructed Lam\'e family. 

We remark that these Lam\'e families are also obtained by Darboux's construction \cite[Chap.\,I\!I\!I~\S 35]{darboux_ortho} that result in Lam\'e families of Dupin cyclides with one totally umbilic surface.   

\ \\\\\framebox{Type 3: $\lspan{\mathfrak{e}_1, \mathfrak{e}_1}<0$} 
\\\\This type of Lam\'e family does not contain a totally umbilic surface or a point. Moreover, at first sight it is geometrically less lucid. After a stereographic projection to the 3-sphere, the circles of the constructed cyclic congruence are geodesics on the 3-sphere, hence great circles on it. 
%%%%%%%%%%%%%%%%%%%%%%%%%%%%%%%%%%%%%%%%%%%%%%%%
%
%%%%%%%%%%%%%%%%%%%%%%%%%%%%%%%%%%%%%%%%%%%%%%%%%%%
\\\\\\Note that Lam\'e families of Type 2 and 3 obtain a M\"obius geometric symmetry: it is provided by the M-Lie inversion determined by the linear sphere complex $\mathfrak{e}_1 \in \lspann{\mathfrak{e}_1, \mathfrak{p}}$.

In particular, if $\sigma_{e_1}$ preserves quer-spheres of the Dupin cyclides, then those are totally umbilic coordinate surfaces in the complementary Lam\'e  families of the DC-system. This is due to the well-known fact that coordinate surfaces of different families intersect along curvature lines (for an example see Fig.\,\ref{fig_types_ell}, right). 

In particular, if $\sigma_{e_1}$ fixes an entire quer-sphere congruence, then we recover the special DC-systems already discussed in Subsection \ref{subsect_tos_2_ortho}.
%%%%%%%%%%%%%%%%%%%%%%%%%%%%%%%%%%%%%%%%%%%%%%%
\begin{figure}
\hspace*{-0.9cm}\vspace*{1.4cm}\includegraphics[scale=0.42]{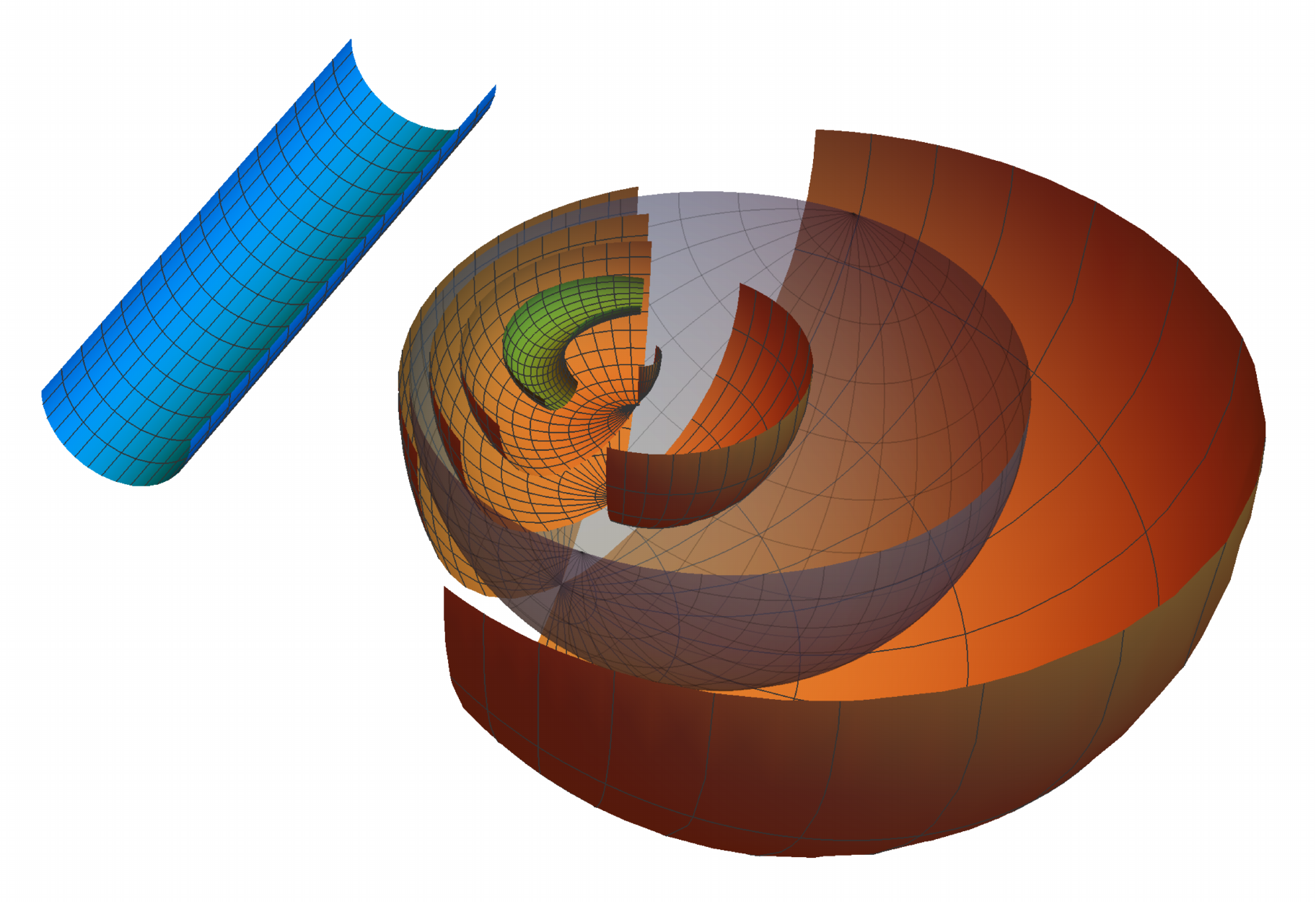}
\hspace*{1.4cm}\vspace*{-1.4cm}\includegraphics[scale=0.53]{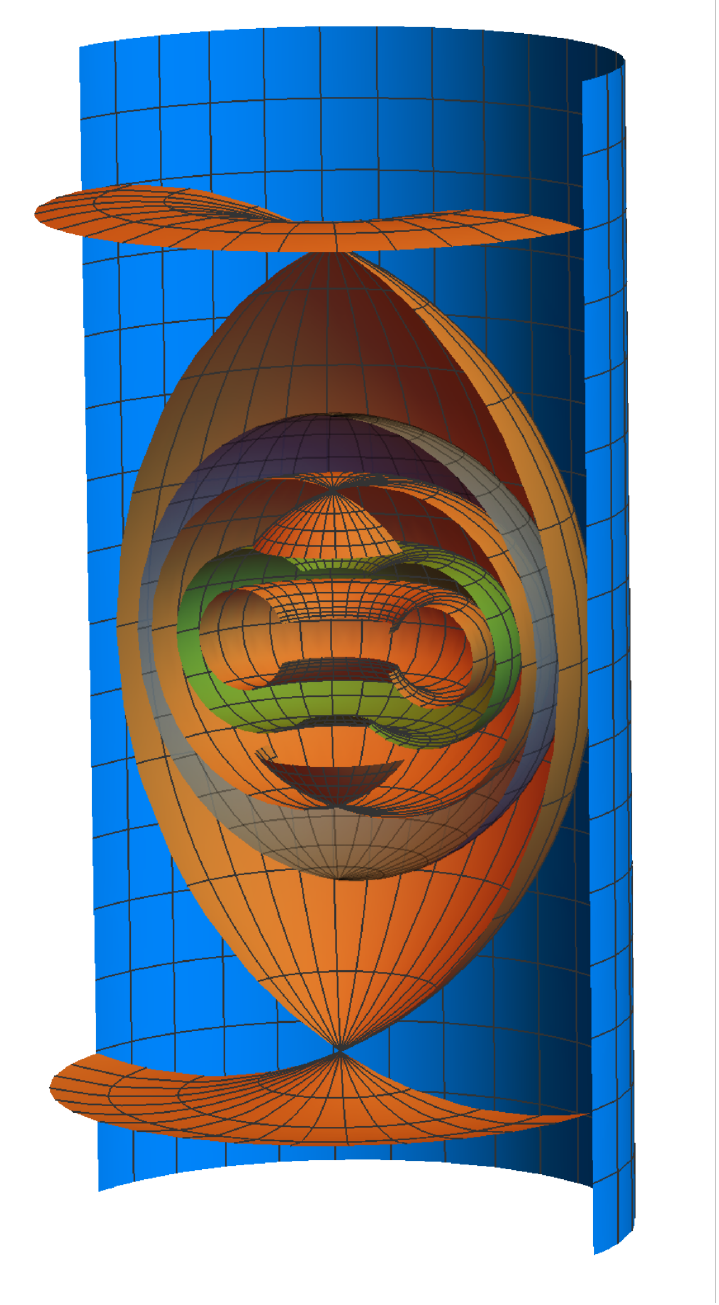}
\includegraphics[scale=0.45]{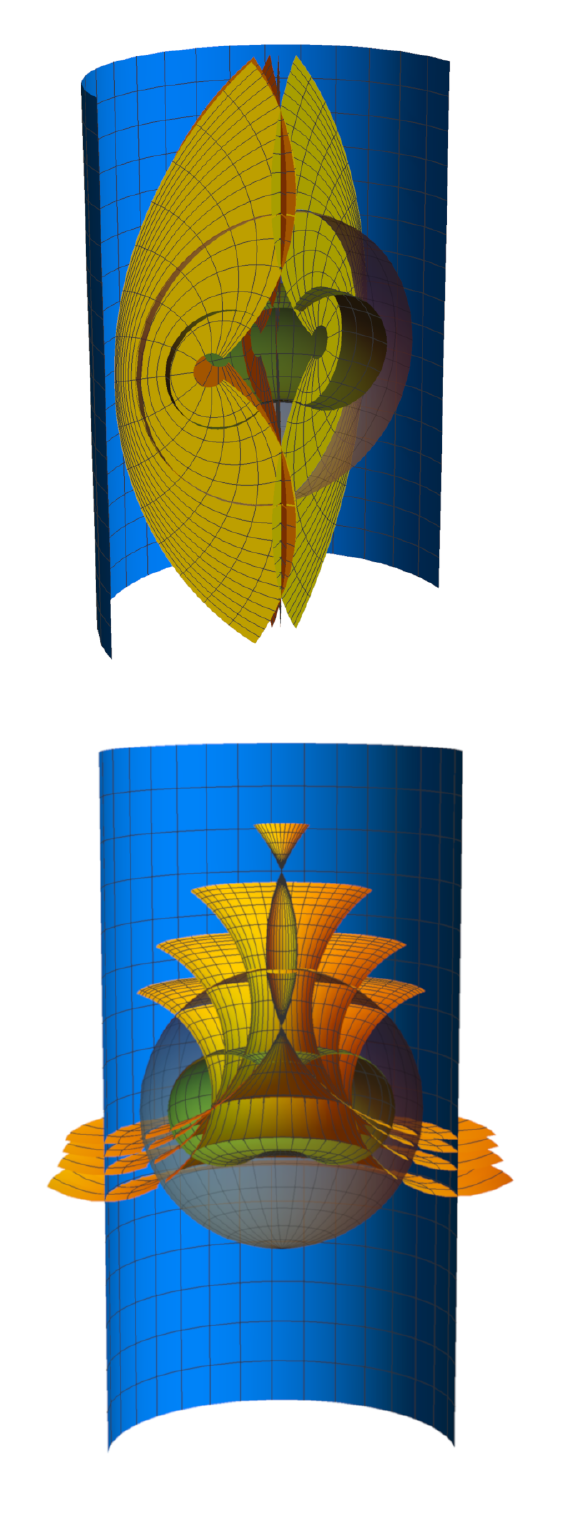}
\caption{\emph{Left and Middle.} Two examples of Lam\'e families of DC-systems obtained by evolving a cylinder (blue). Both families are of Type 2. The green Dupin cyclide patches are obtained by reflection in the gray sphere that is also part of the Lam\'e family. Depending on the position of this reflection sphere, the supplementary coordinate surface families can consist of totally umbilic coordinate surfaces.  \emph{Right.} The supplementary coordinate surfaces (yellow) of the Lam\'e family in the middle: one family is given by planes, namely the quer-spheres of the cylinder, the other one consists of Dupin cyclides.}
\label{fig_types_ell}
\end{figure} 
%
%
%%%%%%%%%%%%%%%%%%%%%%%%%%%%%%%%%%%%%%%%%%%%%%%%%%%%
\section{Applications}\label{sect_applications}
\noindent In this last section we will briefly sketch some applications that could benefit from the developed approach to Dupin cyclides and DC-systems via Lie sphere geometric evolutions.
%%%%%%%%%%%%%%%%%%%%%%%%%%%%%%%%%%%%%%%%%%%%%%%%%%%%%%%%%%%%%%%%%%%%%%
\subsection{Blending Dupin cyclides}
Due to their special geometric properties, patches of Dupin cyclides are popular as blending surfaces (see for example \cite{blending_dupin, blending_dupin_2, discrete_channel, PRATT1990221}). The discussed approach provides a direct and efficient way to construct these patches and allows direct conclusions on the geometry of the blend, as for example singularities. 
%%%%%%%%%%%%%%%%%%%%%%%%%%%%%%%%%%%%%%%%%%%%%

Blending Dupin cyclides can be recovered from a variety of different prescribed data. Here, as an example, we start with two curvature spheres and a curvature circle on one of the spheres as initial data and seek for the uniquely determined blending Dupin cyclide (cf.\,\cite{blending_dupin} and \cite[\S 2.3]{discrete_channel}). 
Thus, let $s_1 \in \LL$ and $s_2 \in \LL$ be two generic spheres and $t \mapsto \gamma_1(t) \in G_{(2,1)}^\mathcal{P}$ a parametrized circle that lies on~$s_1$. 

The following procedure will provide the missing data of the blending Dupin cyclide represented by $\Delta = D_1 \oplus_\perp D_2 \in G_{(2,1)} \times G_{(2,1)}$ with $s_1, s_2 \in D_1$:  
\begin{itemize}
\item consider the M-Lie inversion $\sigma_a$, where
\begin{equation*}
\mathfrak{a} = \lspan{\mathfrak{s}_2, \mathfrak{p}}\mathfrak{s}_1 - \lspan{\mathfrak{s}_1, \mathfrak{p}}\mathfrak{s}_2,
\end{equation*}
that interchanges data related to $s_1$ and $s_2$ (cf.\,Subsection~\ref{subset_param});
\vspace*{0.1cm}\item thus, the curvature circle $\gamma_2$ on $s_2$ is provided by
\begin{equation*}
t \mapsto \gamma_2(t):=\sigma_a(\gamma_1(t));
\end{equation*}
\item let $q_1 \in \LL$ be one of the two quer-spheres along $\gamma_1$, namely a  sphere that intersects $s_1$ along $\gamma_1$ orthogonally; since $\sigma_a$ is an M-Lie inversion, the sphere
\begin{equation*}
q_2 = \sigma_a(q_1)
\end{equation*}
intersects $s_2$ along $\gamma_2$ orthogonally and is therefore also a quer-sphere of the sought-after blending Dupin cyclide; 
\vspace*{0.1cm}\item the two quer-spheres $q_1$ and $q_2$ already determine an  M-sphere pencil with spheres in $\lspann{\mathfrak{q}_1, \mathfrak{q}_2, \mathfrak{p}}$; this pencil contains one family of quer-spheres of $\Delta$ and therefore give rise to the corresponding evolution map; 
\vspace*{0.1cm}\item hence, by using Proposition \ref{prop_evolution_quer1} and Corollary \ref{cor_evolution_quer2}, we can directly recover various missing data (e.\,g.\,curvature spheres, curvature circles) of the blending Dupin cyclidic patch.  
\end{itemize}

\subsection{Visualization of Dupin cyclides}
For the visualization of (patches of) Dupin cyclides it is often essential to have control over the curvature line parametrization and, in particular, on the placing of finitely many curvature lines. In the discussed framework, the placing amounts to the choice of finitely many M-Lie inversions in the evolution maps associated to the Dupin cyclide.

The following subdivision algorithm provides a possibility to arrange the curvature lines on a Dupin cyclidic patch in a symmetric way: let $\Delta=D_1 \oplus_\perp D_2$ be a Dupin cyclide and fix two curvature spheres $s_1, s_2 \in D_1$ that should restrict the patch in this coordinate direction; generically, this choice will split the Dupin cyclide into two patches.
\begin{itemize}
\item the M-Lie inversion $\sigma_a$, where
\begin{equation*}
\mathfrak{a} = \lspan{\mathfrak{s}_2, \mathfrak{p}}\mathfrak{s}_1 - \lspan{\mathfrak{s}_1, \mathfrak{p}}\mathfrak{s}_2,
\end{equation*}
maps the curvature circle $\gamma_1$ on $s_1$ onto the curvature circle $\gamma_2:=\sigma_a(\gamma_1)$ on $s_2$;
\vspace*{0.1cm}\item moreover, generically, there exist two curvature spheres $s_{12}, s_{12}' \in D_1$ that lie in the linear sphere complex $\mathbb{P}(\mathcal{L} \cap \{\mathfrak{a}\}^\perp)$; 
\vspace*{0.1cm}\item the curvature circles $\gamma_{12}=\sigma_{a_{12}}(\gamma_1)$ and $\gamma'_{12}=\sigma_{a'_{12}}(\gamma_1)$, where
\begin{equation*}
\mathfrak{a}_{12}= \lspan{\mathfrak{s}_{12}, \mathfrak{p}}\mathfrak{s}_1 - \lspan{\mathfrak{s}_1, \mathfrak{p}}\mathfrak{s}_{12} \ \ \text{and} \ \ \mathfrak{a}'_{12}= \lspan{\mathfrak{s}'_{12}, \mathfrak{p}}\mathfrak{s}_1 - \lspan{\mathfrak{s}_1, \mathfrak{p}}\mathfrak{s}'_{12},
\end{equation*}  
lie in a Lie geometric sense in the ``middle'' of $\gamma_1$ and $\gamma_2$; note that each of the two circles $\gamma_{12}$ and $\gamma'_{12}$ lie on one of the two distinguished patches; 
\vspace*{0.1cm}\item reapplying the described steps for the circle pairs $(\gamma_1, \gamma_{12})$ and $(\gamma_{12}, \gamma_2)$ will refine the curvature line net on the Dupin cyclidic patches; further iterative subdivision will then lead to the desired number of curvature lines; 
\vspace*{0.1cm}\item an analogous algorithm can be used to obtain the parameter lines of the other family of curvature lines.
\end{itemize}   
%%%%%%%%%%%%%%%%%%%%%%%%%%%%%%%%%%%%%%%%%%%%%%%%%
%
%
%%%%%%%%%%%%%%%%%%%%%%%%%%%%%%%%%%%%%%%%%%%%%%%%%%%%%%%%%
\subsection{Dupin cyclidic cubes}
Any DC-system restricted to a rectangular domain $[x_1,x_2] \times [y_1,y_2] \times [z_1,z_2]$ yields a Dupin cyclidic cube in the sense of \cite{dcyclidic}; that is, the eight corner point spheres
provide a circular hexahedron and the six Dupin cyclidic patches at the boundaries intersect orthogonally along common curvature circles.

The construction of DC-systems via evolution in Subsection \ref{subsect_dc_via_evolution} provides a method to construct and geometrically understand these Dupin cyclidic cubes. In particular, the various types of DC-systems can be exploited to obtain special configurations of the Dupin cyclidic cubes. For example, if two boundary Dupin cyclidic patches of opposite faces of the cube are related by an M-Lie inversion then, additionally, the opposite diagonal point spheres are also concircular (see Fig.\,\ref{fig_cubes}). 
\begin{figure}
\begin{minipage}{3cm}
\hspace*{-1.2cm}\includegraphics[scale=0.2]{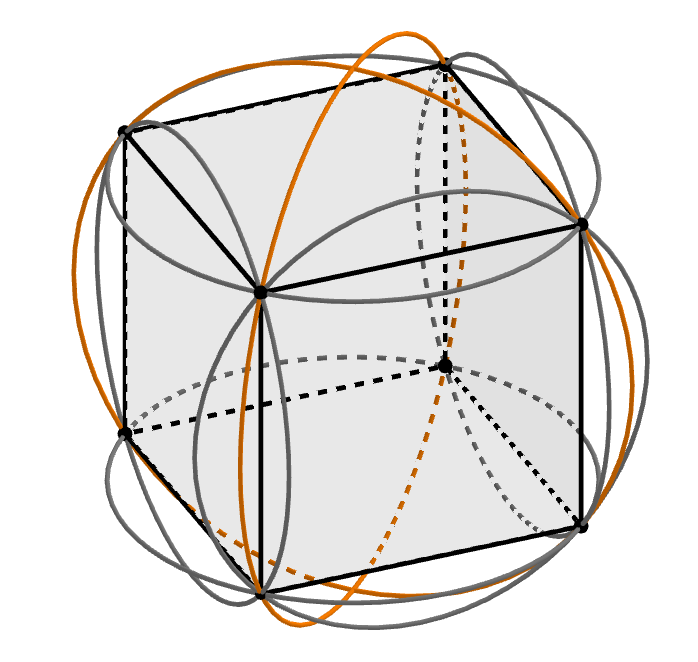}
\end{minipage}
\begin{minipage}{4cm}
\hspace*{-0.7cm}\includegraphics[scale=0.3]{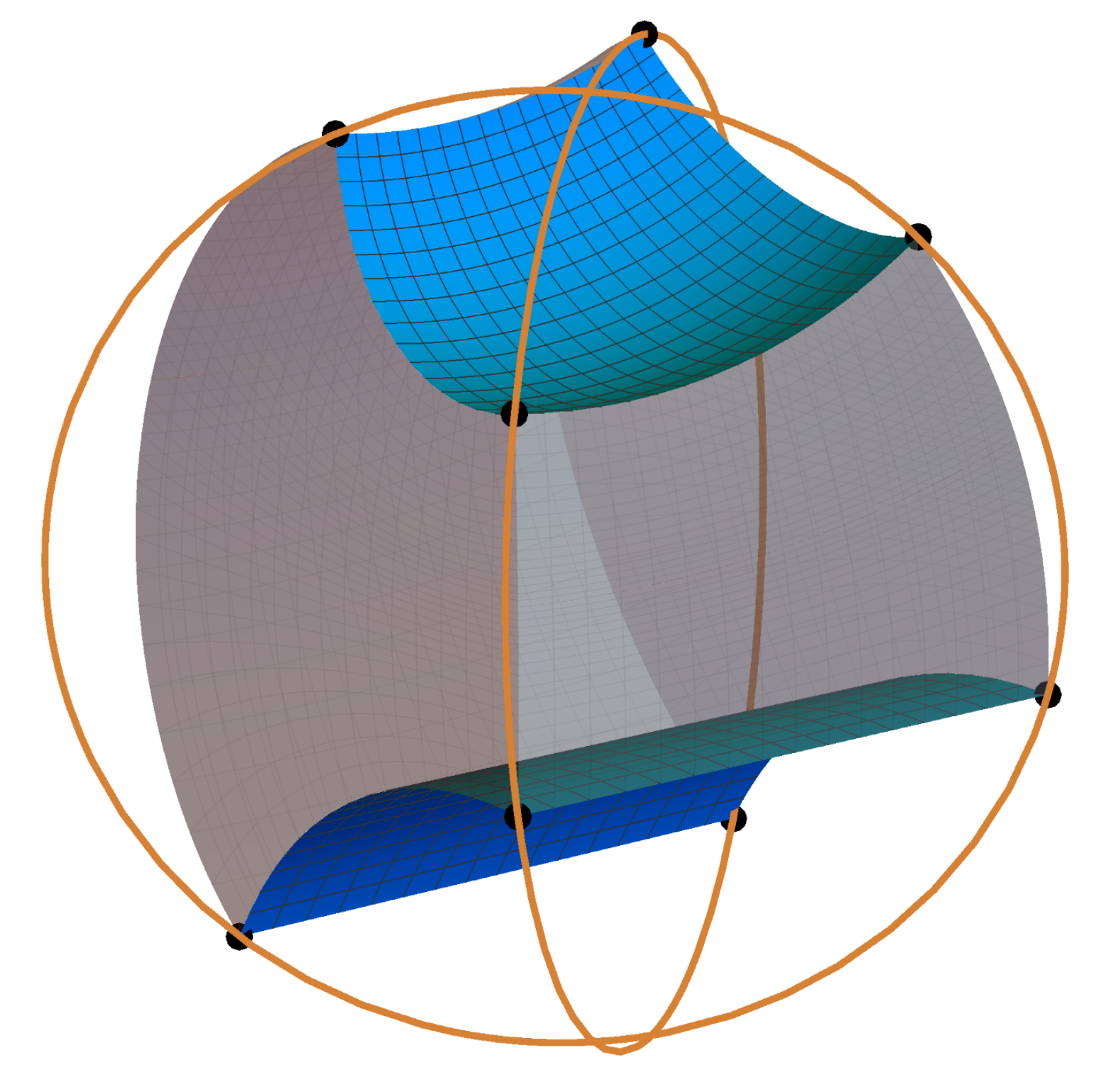}
\end{minipage}
\begin{minipage}{4cm}
\hspace*{0.7cm}\includegraphics[scale=0.4]{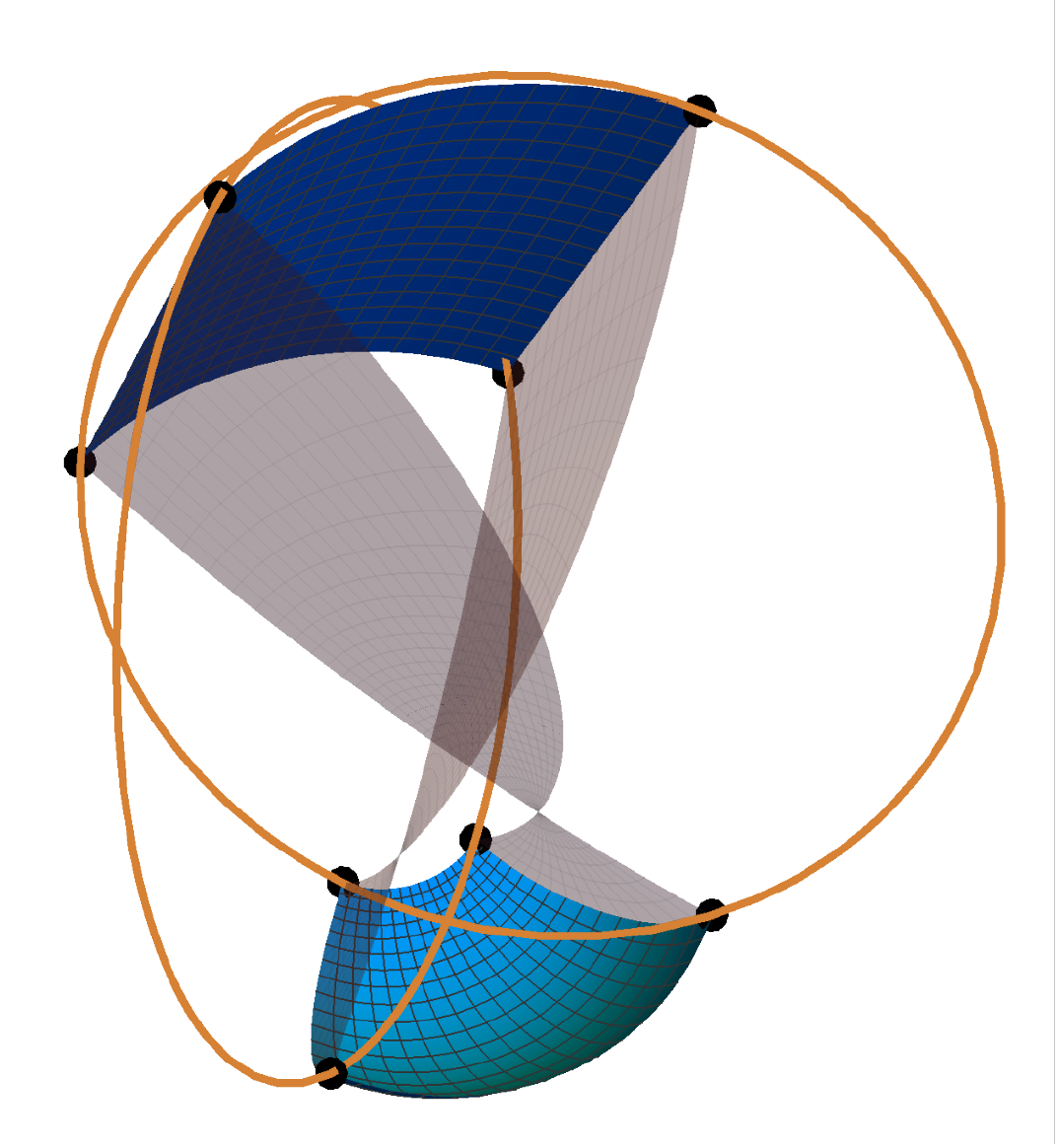}
\end{minipage}
\caption{Rectangular patches of DC-systems lead to 3D cyclidic nets \cite{dcyclidic}. If two opposite boundary patches are related by an M-Lie inversion, then opposite diagonal points of the underlying circular hexahedron also lie on a circle (orange). If the M-Lie inversion is elliptic, the vertical faces are embedded \textit{(middle)}, if it is hyperbolic they are non-embedded \textit{(right)}.}\label{fig_cubes}
\end{figure} 
\subsection{Discrete Dupin cyclidic systems} Discrete circular nets (that is, quadrilateral meshes such that the four vertices of each elementary quadrilateral lie on a circle) provide a possible discretization of curvature line parametrized surfaces (2-dimensional nets $\mathbb{Z}^2 \rightarrow \mathbb{R}^3$) or triply orthogonal systems (3-dimensional nets $\mathbb{Z}^3 \rightarrow \mathbb{R}^3$) \cite{ddg_book}.  

Note that four point spheres $x_1, x_2, \sigma(x_1), \sigma(x_2)$ that are related by an M-Lie inversion $\sigma$ lie in a 3-dimensional subspace of $\mathbb{R}^{4,2}$ and therefore lie on circle. As a consequence the proposed construction for Dupin cyclides and DC-systems interacts well with the discrete notion of circular meshes.

The choice of a discrete initial circle and finitely many M-Lie inversions in Proposition~\ref{prop_evolution_quer1} and Corollary~\ref{cor_evolution_quer2} gives rise to a discrete Dupin cyclide in the sense of~\cite[\S 2.4]{discrete_channel}. Furthermore, a discrete Dupin cyclide and finitely many Lie inversions in the evolution map discussed in Theorem~\ref{thm_dc_via_evolution} yield a Lam\'e family of discrete Dupin cyclides and  hereafter to discrete Dupin cyclidic systems. We remark that Lam\'e families of discrete DC-systems that contain a totally umbilic coordinate surface and the associated discrete cyclic systems were also obtained in \cite[\S 4.3]{hertrichjeromin2021discrete} with the help of discrete cyclic circle congruences.
%%%%%%%%%%%%%%%%%%%%%%%%%%%%%%%%%%%%%%%%%%%%%%%%%
%
%
%%%%%%%%%%%%%%%%%%%%%%%%%%%%%%%%%%%%%%%%%%%%%%%%%
\bibliography{mybib}
%%%%%%%%%%%%%%%%%%%%%%%%%%%%%%%%%%%%%%%%%5

%%%%%%%%%%%%%%%%%%%%%%%%%%%%%%%%%%%%%%%%%%%%%%%%%5
\vspace*{1cm}\begin{minipage}{12cm}
\textbf{Gudrun Szewieczek}, gudrun.szewieczek@tuwien.ac.at
\\TU Wien, Wiedner Hauptstra\ss e 8-10/104, 1040 Vienna, Austria
\end{minipage}
\end{document}